  \theoremstyle{plain}
    \newtheorem{thm}{Theorem}[section]
    \newtheorem{prop}[thm]{Proposition}
    \newtheorem{subsec}[thm]{}
\theoremstyle{definition}
    \newtheorem{defn}[thm]{Definition}
        \newtheorem{remark}[thm]{Remark}
    \newtheorem{exam}[thm]{Example}
\theoremstyle{remark}
\title{}
\author{}
\date{}
\begin{document}

\title[]{Cohomology theory of Nijenhuis Lie algebras and (generic) Nijenhuis Lie bialgebras}

\author{Apurba Das}
\address{Department of Mathematics,
Indian Institute of Technology, Kharagpur 721302, West Bengal, India}
\email{apurbadas348@gmail.com, apurbadas348@maths.iitkgp.ac.in}



\begin{abstract}
The aim of this paper is twofold. In the first part, we define the cohomology of a Nijenhuis Lie algebra with coefficients in a suitable representation. Our cohomology of a Nijenhuis Lie algebra governs the simultaneous deformations of the underlying Lie algebra and the Nijenhuis operator. Subsequently, we define homotopy Nijenhuis operators on $2$-term $L_\infty$-algebras and show that in some cases they are related to third cocycles of Nijenhuis Lie algebras. In another part of this paper, we extend our study to (generic) Nijenhuis Lie bialgebras where the Nijenhuis operators on the underlying Lie algebras and Lie coalgebras need not be the same. In due course, we introduce matched pairs and Manin triples of Nijenhuis Lie algebras and show that they are equivalent to Nijenhuis Lie bialgebras. Finally, we consider the admissible classical Yang-Baxter equation whose antisymmetric solutions yield Nijenhuis Lie bialgebras.
\end{abstract}

\maketitle



\medskip

\begin{center}
\noindent {2020 MSC classification:} 17B40, 17B55, 17B56, 17B62.

\noindent  {Keywords:} Nijenhuis Lie algebras, Cohomology, Deformations, Homotopy Nijenhuis operators, Nijenhuis Lie bialgebras.
\end{center}

 



\thispagestyle{empty}

\tableofcontents


\medskip

\section{Introduction}
A traditional approach to studying a mathematical structure is associating some invariants. Among others, the cohomology of an associative algebra is a very classical invariant that controls the deformations and extensions of the given algebra \cite{hoch}, \cite{gers}. Subsequently, cohomology and deformation theory were generalized for Lie algebras by Nijenhuis and Richardson \cite{nij-ric}. As of now, cohomologies for various kinds of algebras have been developed and their applications were also obtained. See also \cite{doubek}, \cite{markl} and the references therein for more details. Recently, people have been very interested in operated algebras (i.e. algebras endowed with distinguished operators). For such an operated algebra, it is worth meaningful to considering the simultaneous deformations of the underlying algebra and the operator. An important instance first appeared in the work of Gerstenhaber and Schack \cite{gers-sch} in their study of algebras with homomorphisms. They developed the cohomology and deformation theory of an associative algebra with a distinguished homomorphism. Later, Loday \cite{loday-der} considered the operad encoding algebras with derivations. In achieving significant progress in Rota-Baxter operators and averaging operators in the last few years, many authors have derived the cohomology and deformations of Rota-Baxter (Lie) algebras and averaging (Lie) algebras. Another interesting operator that appears in the linear deformation theory of algebraic structures \cite{koss}, integrable systems, nonlinear evolution equations and tensor hierarchies \cite{dorfman} \cite{koss}, bi-Hamiltonian systems \cite{gra-bi} and the geometry of vector-valued differential forms \cite{fro-nij-1} is `Nijenhuis operator'. It is important to mention that Nijenhuis operators on Lie algebras are closely related to twisted Rota-Baxter operators and they produce NS-Lie algebras \cite{das-twisted}. A Lie algebra endowed with a distinguished Nijenhuis operator is referred to as a `Nijenhuis Lie algebra'. By considering the importance of Nijenhuis operators in various directions of mathematics and mathematical physics, it is desirable to have the cohomology theory of a Nijenhuis Lie algebra that controls the simultaneous deformations of the underlying Lie algebra and the Nijenhuis operator. Our primary aim in this paper is to develop a suitable cohomology theory for a Nijenhuis Lie algebra that serves the purpose. To construct the cohomology of a Nijenhuis Lie algebra, we adopt the following approach. First, we recall the Fr\"{o}licher-Nijenhuis bracket \cite{fro-nij-1} \cite{yang} associated with a given Lie algebra whose Maurer-Cartan elements are precisely Nijenhuis operators on this Lie algebra. As a consequence of this characterization, we can define the cohomology of a Nijenhuis operator $N$. It is important to remark that the cohomology of a Nijenhuis operator is not the same as the Chevalley-Eilenberg cohomology of the deformed Lie algebra. However, we obtain a homomorphism from the cohomology of a Nijenhuis operator $N$ to the Chevalley-Eilenberg cohomology of the deformed Lie algebra. Next, given a Nijenhuis Lie algebra, we find a suitable homomorphism from the Chevalley-Eilenberg cochain complex of the underlying Lie algebra to the cochain complex of the Nijenhuis operator. The mapping cone corresponding to this homomorphism is defined to be the cochain complex of the given Nijenhuis Lie algebra. The cohomology groups thus obtained are said to be the cohomology groups of the Nijenhuis Lie algebra (with coefficients in the adjoint representation). Subsequently, we generalize this cohomology of a Nijenhuis Lie algebra in the presence of a suitable representation. More specifically, we consider Nijenhuis representations of a Nijenhuis Lie algebra and define cohomology with coefficients in a Nijenhuis representation.

\medskip

To find applications of our cohomology theory, we first consider deformations of a Nijenhuis Lie algebra where we allow the simultaneous deformations of the Lie bracket and the Nijenhuis operator. Among others, we show that the set of all equivalence classes of infinitesimal deformations of a Nijenhuis Lie algebra has a bijection with the second cohomology group of the Nijenhuis Lie algebra with coefficients in the adjoint Nijenhuis representation. Another application of the second cohomology group of a Nijenhuis Lie algebra with coefficients in a Nijenhuis representation is given by the present author in \cite{das-nj}. More precisely, the author has considered the non-abelian cohomology group of a Nijenhuis Lie algebra with values in another Nijenhuis Lie algebra and showed that it classifies the set of all isomorphism classes of non-abelian extensions of Nijenhuis Lie algebras. In a particular case, it gives a bijection between the set of all isomorphism classes of abelian extensions of a Nijenhuis Lie algebra by a given Nijenhuis representation and the second cohomology group. On the other hand, to get another application of our cohomology theory, we first introduce `homotopy Nijenhuis operators' on a $2$-term $L_\infty$-algebra. In this paper, we shall call a $2$-term $L_\infty$-algebra endowed with a homotopy Nijenhuis operator as a $2$-term Nijenhuis $L_\infty$-algebra. We show that `skeletal' $2$-term Nijenhuis $L_\infty$-algebras are characterized by third cocycles of Nijenhuis Lie algebras. We also consider crossed modules of Nijenhuis Lie algebras and show that they characterize `strict' $2$-term Nijenhuis $L_\infty$-algebras.

\medskip

In another part of this paper, we develop the bialgebra theory for Nijenhuis Lie algebras. The notion of Lie bialgebras first appeared in the work of Drinfeld \cite{drin} in the study of deformations of universal enveloping algebras of Lie algebras. A Lie bialgebra is simply a Lie algebra and a Lie coalgebra both defined on a vector space satisfying a compatibility condition. Generalizing this concept, the authors in \cite{lang-sheng}, \cite{bai-bialgebra} have recently developed the bialgebra theory for Rota-Baxter Lie algebras. Among others, they defined the notion of Rota-Baxter Lie bialgebras and found their relations with the classical Yang-Baxter equation. In the context of Nijenhuis Lie algebras, recently, the author of \cite{ravanpak} considered the notion of an NL bialgebra as the analogue of Poisson-Nijenhuis structures studied in the context of integrable systems. According to this, a Nijenhuis Lie bialgebra is given by a Lie bialgebra equipped with a Nijenhuis operator $N$ on the underlying Lie algebra satisfying certain compatibility conditions. It turns out that the map $N$ becomes a Nijenhuis operator on the underlying Lie coalgebra. Since the Nijenhuis operator on the underlying Lie algebra and Lie coalgebra are the same, this definition of an NL bialgebra doesn't fit with the possible theories of matched pairs and Manin triples of Nijenhuis Lie algebras. In this paper, we define a generic Nijenhuis Lie bialgebra where the underlying Nijenhuis Lie algebra and Nijenhuis Lie coalgebra share different Nijenhuis operators in general. To justify our definition, we consider matched pairs and Manin triples of Nijenhuis Lie algebras and show that they are equivalent to Nijenhuis Lie bialgebras. Finally, we consider the admissible classical Yang-Baxter equation whose antisymmetric solutions yield Nijenhuis Lie bialgebras. In the end, we obtain some important results including representations and matched pairs of NS-Lie algebras and relate them with the corresponding notions for Nijenhuis Lie algebras.
 
\medskip

The paper is organized as follows. In Section \ref{sec2}, we recall some necessary background on Nijenhuis operators and the Fr\"{o}licher-Nijenhuis bracket associated with a given Lie algebra. In Section \ref{sec3}, we first introduce and study the cohomology of a Nijenhuis operator and find its relation with the Chevalley-Eilenberg cohomology of the deformed Lie algebra. Subsequently, we also define the cohomology of a Nijenhuis Lie algebra with coefficients in a Nijenhuis representation. As an application of our cohomology theory, in Section \ref{sec4}, we consider deformations of a Nijenhuis Lie algebra. Then in Section \ref{sec5}, we introduce homotopy Nijenhuis operators on $2$-term $L_\infty$-algebras and give characterizations of skeletal and strict $2$-term Nijenhuis $L_\infty$-algebras. Finally, in Section \ref{sec6}, we consider (generic) Nijenhuis Lie bialgebras and show that they are equivalent to matched pairs and Manin triples of Nijenhuis Lie algebras.

\medskip

\medskip

\section{Lie algebras and Nijenhuis operators}\label{sec2}
In this section, we first recall the Nijenhuis-Richardson bracket whose Maurer-Cartan elements are precisely Lie algebra structures on a given vector space. Next, we revise some basic properties of Nijenhuis operators on a Lie algebra and recall the construction of the Fr\"{o}licher-Nijenhuis bracket that characterizes Nijenhuis operators as its Maurer-Cartan elements.

\medskip

Let $\mathfrak{g}$ be a vector space (need not have any additional structure). Then the Nijenhuis-Richardson bracket associated with the space $\mathfrak{g}$ is a graded Lie bracket on the space of all antisymmetric multilinear maps on $\mathfrak{g}$. Explicitly, the {\bf Nijenhuis-Richardson bracket} \cite{nij-ric} is the bracket
\begin{align*}
    [~,~]_\mathsf{NR} : \mathrm{Hom} (\wedge^{m} \mathfrak{g}, \mathfrak{g}) \times \mathrm{Hom} (\wedge^{n} \mathfrak{g}, \mathfrak{g}) \rightarrow \mathrm{Hom} (\wedge^{m+n-1} \mathfrak{g}, \mathfrak{g})
\end{align*}
defined by $[P, Q]_\mathsf{NR} := i_P Q - (-1)^{(m-1)(n-1)} ~\!  i_Q P$, where
\begin{align*}
    (i_P Q ) (x_1, \ldots, x_{m+n-1} ) = \sum_{\sigma \in \mathrm{Sh} (m, n-1)} (-1)^\sigma ~ \! Q \big( P (x_{\sigma (1)}, \ldots, x_{\sigma (m)}), x_{\sigma (m+1)}, \ldots, x_{\sigma (m+n-1)}   \big),
\end{align*}
for $P \in  \mathrm{Hom} (\wedge^{m} \mathfrak{g}, \mathfrak{g})$, $Q \in  \mathrm{Hom} (\wedge^{n} \mathfrak{g}, \mathfrak{g})$ and elements $x_1, \ldots, x_{m+n-1} \in \mathfrak{g}$. Then it turns out that $\big( \oplus_{n = 0}^\infty  \mathrm{Hom} (\wedge^{n+1} \mathfrak{g}, \mathfrak{g}), [~, ~]_\mathsf{NR}   \big)$ is a graded Lie algebra, called the Nijenhuis-Richardson graded Lie algebra associated to the vector space $\mathfrak{g}$. An element $\mu \in \mathrm{Hom} (\wedge^2 \mathfrak{g}, \mathfrak{g})$ is a Maurer-Cartan element in the Nijenhuis-Richardson graded Lie algebra if and only if the bracket $[~,~]_\mathfrak{g}: \mathfrak{g} \times \mathfrak{g} \rightarrow \mathfrak{g} $ defined by $[x, y]_\mathfrak{g}:= \mu (x, y)$
is a Lie bracket on the vector space $\mathfrak{g}$.

Let $(\mathfrak{g}, [~,~]_\mathfrak{g})$ be a Lie algebra and $(\mathcal{V}, \rho)$ be a representation of it. That is, $\mathcal{V}$ is a vector space endowed with a Lie algebra homomorphism $\rho: \mathfrak{g} \rightarrow \mathrm{End}(\mathcal{V})$. Then the Chevalley-Eilenberg cochain complex of the Lie algebra $(\mathfrak{g}, [~,~]_\mathfrak{g})$ with coefficients in the representation $(\mathcal{V}, \rho)$ is the complex $\{ \oplus_{n=0}^\infty \mathrm{Hom} (\wedge^n \mathfrak{g}, \mathcal{V}), \delta_\mathrm{CE} \}$, where the coboundary map $\delta_\mathrm{CE} : \mathrm{Hom} (\wedge^n \mathfrak{g}, \mathcal{V}) \rightarrow \mathrm{Hom} (\wedge^{n+1} \mathfrak{g}, \mathcal{V})$ is given by
\begin{align}\label{ce-diff}
    (\delta_\mathrm{CE} f) (x_1, \ldots, x_{n+1} ) =~& \sum_{i=1}^{n+1} (-1)^{i+1} ~ \! \rho_{x_i} f (x_1, \ldots, \widehat{x_i}, \ldots, x_{n+1}) \\
    &+ \sum_{1 \leq i < j \leq n+1} (-1)^{i+j} ~ \! f ([x_i, x_j]_\mathfrak{g}, x_1, \ldots, \widehat{x_i}, \ldots, \widehat{x_j}, \ldots, x_{n+1}), \nonumber
\end{align}
for $f \in \mathrm{Hom} (\wedge^n \mathfrak{g}, \mathcal{V}) $ and $x_1, \ldots, x_{n+1} \in \mathfrak{g}$. The corresponding cohomology groups are said to be the Chevalley-Eilenberg cohomology groups of the Lie algebra $(\mathfrak{g}, [~,~]_\mathfrak{g})$ with coefficients in the representation $(\mathcal{V}, \rho)$, and they are denoted by $H^\bullet_\mathrm{CE} (\mathfrak{g} ; \mathcal{V})$. When $(\mathcal{V}, \rho) = (\mathfrak{g}, \mathrm{ad}_\mathfrak{g})$ is the adjoint representation, the coboundary map (\ref{ce-diff}) is simply given by $\delta_\mathrm{CE} (f) = - [\mu, f]_\mathsf{NR}$, for any $f \in \mathrm{Hom} (\wedge^n \mathfrak{g}, \mathfrak{g})$.
In this case, the corresponding cohomology groups are denoted by $H^\bullet_\mathrm{CE} (\mathfrak{g})$.


\begin{defn}
    Let $(\mathfrak{g}, [~,~]_\mathfrak{g})$ be a Lie algebra. A {\bf Nijenhuis operator} on this Lie algebra is a linear map $N : \mathfrak{g} \rightarrow \mathfrak{g}$ that satisfies
    \begin{align*}
        [N (x), N (y)]_\mathfrak{g} = N \big( [ N(x), y]_\mathfrak{g} + [x, N(y)]_\mathfrak{g} - N [x, y]_\mathfrak{g} \big), \text{ for all } x, y \in \mathfrak{g}. 
    \end{align*}
\end{defn}

There are various examples of Nijenhuis operators. First and foremost, for any Lie algebra $(\mathfrak{g}, [~,~]_\mathfrak{g})$, the identity map $\mathrm{Id}_\mathfrak{g} : \mathfrak{g} \rightarrow \mathfrak{g}$ is a Nijenhuis operator on it. Moreover, if $N: \mathfrak{g} \rightarrow \mathfrak{g}$ is a Nijenhuis operator then $\lambda N$ is also a Nijenhuis operator, for any $\lambda \in {\bf k}$. In a complex Lie algebra, the complex structure is itself a Nijenhuis operator on the underlying real Lie algebra. Any (relative) Rota-Baxter operator on a Lie algebra (with respect to a representation) can be lifted to a Nijenhuis operator on the semidirect product Lie algebra \cite{sheng-o}. See also \cite{koss}, \cite{das-nij} for more examples of Nijenhuis operators.

Let $(\mathfrak{g}, [~,~]_\mathfrak{g})$ be a Lie algebra and $N : \mathfrak{g} \rightarrow \mathfrak{g}$ be a Nijenhuis operator on it. Then the underlying vector space $\mathfrak{g}$ inherits a new Lie algebra structure with the bracket
\begin{align*}
    [x, y]_\mathfrak{g}^N := [ N(x), y]_\mathfrak{g} + [x, N(y)]_\mathfrak{g} - N [x, y]_\mathfrak{g}, \text{ for } x, y \in \mathfrak{g}.
\end{align*}
The Lie algebra $(\mathfrak{g}, [~,~]_\mathfrak{g}^N)$ which is often denoted by $\mathfrak{g}^N$ is said to be the {\em deformed Lie algebra} associated to the Nijenhuis operator $N$. The following more general result has been proved in \cite{koss}.

\begin{prop}
Let $(\mathfrak{g}, [~,~]_\mathfrak{g})$ be a Lie algebra and $N : \mathfrak{g} \rightarrow \mathfrak{g}$ be a Nijenhuis operator on it.
\begin{itemize}
    \item[(i)] Then for each $k \geq 0$, the map $N^k : \mathfrak{g} \rightarrow \mathfrak{g}$ is also a Nijenhuis operator on the Lie algebra $(\mathfrak{g}, [~,~]_\mathfrak{g})$.
    \item[(ii)] For any $k, l \geq 0$, the map $N^l : \mathfrak{g} \rightarrow \mathfrak{g}$ is a Nijenhuis operator on the deformed Lie algebra $(\mathfrak{g}, [~,~]_\mathfrak{g}^{N^k})$.
    \item[(iii)] Moreover, the deformed Lie algebras $(\mathfrak{g}, ( [~,~]_\mathfrak{g}^{N^k})^{N^l})$ and $(\mathfrak{g},  [~,~]_\mathfrak{g}^{N^{k+l}})$ are the same.
\end{itemize}
\end{prop}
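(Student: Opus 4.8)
The plan is to reduce all three parts to a single family of bilinear identities and to establish that family by one induction. Write $\mu:=[~,~]_\mathfrak{g}$, and for an arbitrary linear map $A:\mathfrak{g}\rightarrow\mathfrak{g}$ put $\mu^A(x,y):=\mu(Ax,y)+\mu(x,Ay)-A\mu(x,y)$, so that $\mu^N=[~,~]_\mathfrak{g}^N$ and, whenever it is a Lie bracket, $\mu^{N^k}$ is the bilinear map underlying $[~,~]_\mathfrak{g}^{N^k}$. The point of departure is the trivial reformulation of the defining identity: $N$ is a Nijenhuis operator on $(\mathfrak{g},\mu)$ if and only if $\mu(Nx,Ny)=N\,\mu^N(x,y)$ for all $x,y\in\mathfrak{g}$; that is, $N$ intertwines $\mu^N$ with $\mu$.

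The heart of the argument is the family of identities, which I will call $(\star_{k,l})$:
\[
\mu(N^kx,N^ly)+\mu(N^lx,N^ky)=N^k\,\mu^{N^l}(x,y)+N^l\,\mu^{N^k}(x,y)\qquad(k,l\geq 0),
\]
to be proved for all $k,l$ by induction on $k+l$. When $k=0$ or $l=0$ the identity is immediate from the definition of $\mu^A$ (note $\mu^{\mathrm{Id}}=\mu$), which disposes of all cases with $k+l\leq 1$. For $k,l\geq 1$, apply the Nijenhuis pull-out identity $\mu(Na,Nb)=N\mu^N(a,b)$ to the two terms on the left-hand side of $(\star_{k,l})$: to $\mu(N^kx,N^ly)$ with $a=N^{k-1}x$, $b=N^{l-1}y$, and to $\mu(N^lx,N^ky)$ with $a=N^{l-1}x$, $b=N^{k-1}y$. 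This rewrites the left-hand side as $N$ applied to a sum of four mixed terms minus $N^2$ applied to a sum of two mixed terms; regrouping those six terms into the three pairs governed respectively by $(\star_{k,l-1})$, $(\star_{k-1,l})$ and $(\star_{k-1,l-1})$ — all of total index at most $k+l-1$, hence available by the inductive hypothesis — and simplifying, the $N^{k+1}\mu^{N^{l-1}}(x,y)$ and $N^{l+1}\mu^{N^{k-1}}(x,y)$ contributions cancel, leaving exactly $N^k\mu^{N^l}(x,y)+N^l\mu^{N^k}(x,y)$. I expect this regrouping and cancellation to be the only point needing care: there is no conceptual obstacle, but the terms must be tracked exactly, and the induction closes precisely because raising one exponent by one through the pull-out identity produces these three lower-order instances and nothing more.

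Granting $(\star_{k,l})$, the three assertions follow quickly. Setting $k=l$ gives $\mu(N^kx,N^ky)=N^k\mu^{N^k}(x,y)$, which is exactly the statement that $N^k$ is a Nijenhuis operator on $(\mathfrak{g},\mu)$; this is (i), and in particular each $\mu^{N^k}$ is then a genuine Lie bracket by the deformation construction recalled above. Next, expanding $(\mu^{N^k})^{N^l}(x,y)$ straight from the definition of $\mu^A$ and subtracting $\mu^{N^{k+l}}(x,y)$, the difference equals the left-hand side minus the right-hand side of $(\star_{k,l})$; hence $(\mu^{N^k})^{N^l}=\mu^{N^{k+l}}$ as bilinear maps, which is (iii). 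Finally, for (ii) I would expand $\mu^{N^k}(N^lx,N^ly)$, apply $(\star_{k+l,l})$ to the cross-term $\mu(N^{k+l}x,N^ly)+\mu(N^lx,N^{k+l}y)$ and part (i) to the term $N^k\mu(N^lx,N^ly)$; after cancellation this gives $\mu^{N^k}(N^lx,N^ly)=N^l\mu^{N^{k+l}}(x,y)=N^l(\mu^{N^k})^{N^l}(x,y)$, the last equality by (iii), which is precisely the statement that $N^l$ is a Nijenhuis operator on the deformed Lie algebra $(\mathfrak{g},[~,~]_\mathfrak{g}^{N^k})$. One can alternatively phrase and prove this result within the Fr\"{o}licher-Nijenhuis formalism recalled above, viewing $N$ as a Maurer-Cartan element and the brackets $\mu^{N^k}$ as successive deformations of $\mu$, but the direct route above is shorter and self-contained.
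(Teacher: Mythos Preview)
Your proof is correct. The paper itself does not prove this proposition; it simply cites \cite{koss} (Kosmann-Schwarzbach and Magri) for it, so there is no in-paper argument to compare yours against. Your key symmetric identity $(\star_{k,l})$ is essentially the vanishing of the mixed Nijenhuis torsion of $N^k$ and $N^l$, and the inductive scheme you describe closes exactly as you say: pulling out one factor of $N$ from each bracket on the left produces precisely the three lower-order instances $(\star_{k,l-1})$, $(\star_{k-1,l})$, $(\star_{k-1,l-1})$, and the $N^{k+1}\mu^{N^{l-1}}$ and $N^{l+1}\mu^{N^{k-1}}$ terms cancel. The deductions of (i)--(iii) from $(\star_{k,l})$ are then straightforward and verified as written; in particular, your computation for (iii) amounts to the identity $(\mu^{N^k})^{N^l}-\mu^{N^{k+l}}=\text{LHS of }(\star_{k,l})-\text{RHS of }(\star_{k,l})=0$, and (ii) follows from $(\star_{k+l,l})$, part (i), and part (iii) exactly as you indicate. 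This is a clean, self-contained elementary argument; the only implicit assumption worth noting is that you divide by $2$ when specialising $k=l$ in $(\star_{k,l})$, so the argument is written for a ground field of characteristic $\neq 2$, which is harmless in the paper's setting.
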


Let $(\mathfrak{g}, [~,~]_\mathfrak{g})$ be a Lie algebra. In \cite{nij-ric2} Nijenhuis and Richardson defined a cup-product (generalizing Gerstenhaber's cup-product from the context of associative algebras)
\begin{align*}
    \vee :  \mathrm{Hom} (\wedge^{m} \mathfrak{g}, \mathfrak{g}) \times \mathrm{Hom} (\wedge^{n} \mathfrak{g}, \mathfrak{g}) \rightarrow \mathrm{Hom} (\wedge^{m+n} \mathfrak{g}, \mathfrak{g}) \text{ given by }
\end{align*}
\begin{align*}
    (P \vee Q) (x_1, \ldots, x_{m+n} ) := \sum_{\sigma \in \mathrm{Sh} (m, n)} (-1)^\sigma ~ \! [ P (x_{\sigma (1)}, \ldots, x_{\sigma (m)}), Q (x_{\sigma (m+1)}, \ldots, x_{\sigma (m+n)}) ]_\mathfrak{g},
\end{align*}
for $P \in \mathrm{Hom} (\wedge^{m} \mathfrak{g}, \mathfrak{g})$, $Q \in \mathrm{Hom} (\wedge^{n} \mathfrak{g}, \mathfrak{g})$ and elements $x_1, \ldots, x_{m+n} \in \mathfrak{g}$. This cup-product makes the pair $\big(  \oplus_{n=1}^\infty \mathrm{Hom} (\wedge^n \mathfrak{g}, \mathfrak{g}), \vee   \big)$ into a graded Lie algebra. Further, it has been shown in \cite{yang}, \cite{baishya-das} that the map 
\begin{align*}
    \mathrm{Hom} (\wedge^{m+1} \mathfrak{g}, \mathfrak{g}) \times \mathrm{Hom} (\wedge^{n} \mathfrak{g}, \mathfrak{g}) \rightarrow \mathrm{Hom} (\wedge^{m+n} \mathfrak{g}, \mathfrak{g}), ~  (P, Q) \mapsto i_P Q
\end{align*}
defines an action of the Nijenhuis-Richardson graded Lie algebra $\big( \oplus_{n = 0}^\infty  \mathrm{Hom} (\wedge^{n+1} \mathfrak{g}, \mathfrak{g}), [~, ~]_\mathsf{NR}   \big)$ on the cup-product graded Lie algebra $\big(  \oplus_{n=1}^\infty \mathrm{Hom} (\wedge^n \mathfrak{g}, \mathfrak{g}), \vee   \big)$. As a result, one shows that the bracket
\begin{align}\label{fn}
    [P, Q]_\mathsf{FN} := P \vee Q + (-1)^m ~\!  i_{\delta_\mathrm{CE} P} Q - (-1)^{(m+1)n}  ~\! i_{\delta_\mathrm{CE} Q} P
\end{align}
makes the graded space $ \oplus_{n=1}^\infty \mathrm{Hom} (\wedge^n \mathfrak{g}, \mathfrak{g})$ into a graded Lie algebra. The bracket defined in (\ref{fn}) is called the {\bf Fr\"{o}licher-Nijenhuis bracket} and the graded Lie algebra $ \big( \oplus_{n=1}^\infty \mathrm{Hom} (\wedge^n \mathfrak{g}, \mathfrak{g}) , [~,~]_\mathsf{FN} \big)$ is called the {\bf Fr\"{o}licher-Nijenhuis algebra} associated to the Lie algebra $(\mathfrak{g}, [~,~]_\mathfrak{g}).$ Moreover, for any $P \in \mathrm{Hom} (\wedge^{m} \mathfrak{g}, \mathfrak{g})$ and $ Q \in \mathrm{Hom} (\wedge^{n} \mathfrak{g}, \mathfrak{g})$,
we have (\cite{yang}, \cite{baishya-das})
\begin{align}\label{bracket-pres}
    \delta_\mathrm{CE} ( [P, Q]_\mathsf{FN}) = [ \delta_\mathrm{CE} P, \delta_\mathrm{CE} Q]_\mathsf{NR} .
\end{align}

For linear maps $N, N' : \mathfrak{g} \rightarrow \mathfrak{g}$ and $x, y \in \mathfrak{g}$, it follows from (\ref{fn}) that
\begin{align*}
    [N, N']_\mathsf{FN} (x, y) =~& [N(x), N'(y)]_\mathfrak{g} + [N'(x), N(y)]_\mathfrak{g} - N'\big( [ N(x), y]_\mathfrak{g} + [x, N(y)]_\mathfrak{g} - N [x, y]_\mathfrak{g} \big) \\
   & ~~ - N \big(  [ N'(x), y]_\mathfrak{g} + [x, N'(y)]_\mathfrak{g} - N' [x, y]_\mathfrak{g} \big).
\end{align*}
This shows that a linear map $N : \mathfrak{g} \rightarrow \mathfrak{g}$ is a Nijenhuis operator on the Lie algebra $(\mathfrak{g}, [~, ~]_\mathfrak{g})$ if and only if $[N, N]_\mathsf{FN} = 0$, i.e. $N$ is a Maurer-Cartan element in the Fr\"{o}licher-Nijenhuis algebra.

\medskip

\medskip

\section{Cohomology theory of Nijenhuis operators and Nijenhuis Lie algebras}\label{sec3}
Given a Lie algebra $(\mathfrak{g}, [~,~]_\mathfrak{g})$, here we first consider the cohomology of a Nijenhuis operator $N$ defined on it. We show that there is a homomorphism from the cohomology of a Nijenhuis operator $N$ to the Chevalley-Eilenberg cohomology of the deformed Lie algebra $(\mathfrak{g}, [~,~]_\mathfrak{g}^N)$. As a byproduct of the Chevalley-Eilenberg cochain complex of the given Lie algebra $(\mathfrak{g}, [~,~]_\mathfrak{g})$ and the cochain complex of the Nijenhuis operator $N$, we define the cochain complex (and hence the cohomology) associated to the Nijenhuis Lie algebra $(\mathfrak{g}, [~,~]_\mathfrak{g}, N)$. Subsequently, we generalize this construction to define the cohomology of a Nijenhuis Lie algebra with coefficients in an arbitrary Nijenhuis representation.

\subsection{Cohomology of Nijenhuis operators}\label{subsec-31} Let  $(\mathfrak{g}, [~,~]_\mathfrak{g})$ be a Lie algebra and $N : \mathfrak{g} \rightarrow \mathfrak{g}$ be a Nijenhuis operator on it. Note that, for each $n \geq 1$, the linear map $N$ induces a map 
\begin{align}\label{dn-map}
    d_N : \mathrm{Hom} (\wedge^n \mathfrak{g}, \mathfrak{g}) \rightarrow \mathrm{Hom} (\wedge^{n+1} \mathfrak{g}, \mathfrak{g}) ~~~ \text{ given by } ~~~ d_N (f) = [N, f]_\mathsf{FN}, \text{ for } f \in \mathrm{Hom} (\wedge^n \mathfrak{g}, \mathfrak{g}).
\end{align}
Since $N$ is a Maurer-Cartan element in the Fr\"{o}licher-Nijenhuis algebra (i.e. $[N, N]_\mathsf{FN} = 0$), it turns out that $(d_N)^2 = 0$. Explicitly, the map $d_N$ is given by
\begin{align}\label{dn-exp}
    &(d_N f) (x_1, \ldots, x_{n+1}) \\
    &= \sum_{i=1}^{n+1} (-1)^{i+1} ~\! [ N (x_i), f (x_1, \ldots, \widehat{x_i}, \ldots, x_{n+1})]_\mathfrak{g} \nonumber \\
    &+ \sum_{1 \leq i < j \leq n+1} (-1)^{i+j }  f ( [N(x_i), x_j]_\mathfrak{g} + [x_i, N (x_j)]_\mathfrak{g} - N [x_i, x_j]_\mathfrak{g}, x_1, \ldots, \widehat{x_i}, \ldots, \widehat{x_j}, \ldots, x_{n+1}) \nonumber \\
    &- N \big(   \sum_{i=1}^{n+1} (-1)^{i+1} [x_i, f (x_1, \ldots, \widehat{x_i}, \ldots, x_{n+1})]_\mathfrak{g} 
    + \sum_{1 \leq i < j \leq n+1} (-1)^{i+j} f ([x_i, x_j]_\mathfrak{g}, x_1, \ldots, \widehat{x_i}, \ldots, \widehat{x_j}, \ldots, x_{n+1})   \big), \nonumber
\end{align}
for $f \in \mathrm{Hom} (\wedge^n \mathfrak{g}, \mathfrak{g})$ and $x_1, \ldots, x_{n+1} \in \mathfrak{g}$. Further, one can extend (\ref{dn-map}) to a map (also denoted by the same notation) $d_N: \mathfrak{g} \rightarrow \mathrm{Hom} (\mathfrak{g}, \mathfrak{g})$ by
\begin{align}\label{dn0}
    d_N (x) (y) = [N (y), x]_\mathfrak{g} - N [y, x]_\mathfrak{g}, \text{ for } x, y \in \mathfrak{g}.
\end{align}
Then it follows that $\{ \oplus_{n=0}^\infty \mathrm{Hom} (\wedge^n \mathfrak{g}, \mathfrak{g}), d_N \}$ is a cochain complex, called the cochain complex associated to the Nijenhuis operator $N$. The corresponding cohomology groups are said to be the {\em cohomology groups} of the Nijenhuis operator $N$, and they are denoted by $H^\bullet (N)$.

\begin{exam}
    Let $(\mathfrak{g}, [~,~]_\mathfrak{g})$ be any Lie algebra. Note that the identity map $\mathrm{Id}_\mathfrak{g}: \mathfrak{g} \rightarrow \mathfrak{g}$ is a Nijenhuis operator on this Lie algebra. For this Nijenhuis operator $N = \mathrm{Id}_\mathfrak{g}$, it follows from the expressions (\ref{dn-exp}) and (\ref{dn0}) that the coboundary map $d_{N = \mathrm{Id}_\mathfrak{g}} : \mathrm{Hom} (\wedge^n \mathfrak{g}, \mathfrak{g}) \rightarrow \mathrm{Hom} (\wedge^{n+1} \mathfrak{g}, \mathfrak{g})$ vanishes identically. Hence the cohomology groups of the Nijenhuis operator $N = \mathrm{Id}_\mathfrak{g}$ are given by $H^n (\mathrm{Id}_\mathfrak{g}) = \mathrm{Hom} (\wedge^n \mathfrak{g}, \mathfrak{g})$, for all $n$. This shows that the cohomology of the identity map $\mathrm{Id}_\mathfrak{g}$ (viewed as a Nijenhuis operator) is independent of the Lie bracket of $\mathfrak{g}$. This is much expected as the identity map $\mathrm{Id}_\mathfrak{g}$ is a Nijenhuis operator for any Lie algebra structure on $\mathfrak{g}$.
\end{exam}

It is important to remark that the authors in \cite{sheng-o} have introduced the cohomology of a (relative) Rota-Baxter operator generalizing the well-known cohomology of a classical {\bf r}-matrix. Here, we shall show that the cochain complex associated with a relative Rota-Baxter operator can be seen as a subcomplex of the cochain complex associated to a suitable Nijenhuis operator. Let $(\mathfrak{g}, [~,~]_\mathfrak{g})$ be a Lie algebra and $(\mathcal{V}, \rho)$ be a representation of it. First, recall that a {\em relative Rota-Baxter operator} (also called an {\em $\mathcal{O}$-operator}) is a linear map $r : \mathcal{V} \rightarrow \mathfrak{g}$ that satisfies
\begin{align}\label{rrb}
    [r(u), r(v) ]_\mathfrak{g} = r (\rho_{r(u)} v - \rho_{r(v)} u), \text{ for } u, v \in \mathcal{V}.
\end{align}
The cochain complex associated to the relative Rota-Baxter operator $r$ is given by $\{ \oplus_{n=0}^\infty \mathrm{Hom} (\wedge^n \mathcal{V}, \mathfrak{g}), d_r \}$, where
\begin{align}
    d_r (x) (v) :=~& [r(v) , x]_\mathfrak{g} + r (\rho_x v), \label{dr-1}\\
    (d_r f)(v_1, \ldots, v_{n+1} ) :=~& \sum_{i=1}^{n+1} (-1)^{i+1} \big\{ [r(v_i), f (v_1, \ldots, \widehat{v_i}, \ldots, v_{n+1})]_\mathfrak{g}  + r ( \rho_{ f (v_1, \ldots, \widehat{v_i}, \ldots, v_{n+1})   } v_i )   \big\} \label{dr-2} \\
   & + \sum_{1 \leq i < j \leq n+1} (-1)^{i+j} ~ \! f \big(  \rho_{ r (v_i)} v_j - \rho_{r (v_j)} v_i, v_1, \ldots, \widehat{v_i}, \ldots, \widehat{v_j}, \ldots, v_{n+1}    \big), \nonumber
\end{align}
for $x \in \mathfrak{g}$, $f \in \mathrm{Hom} (\wedge^n \mathcal{V}, \mathfrak{g})$ and $v, v_1, \ldots, v_{n+1} \in \mathcal{V}$. On the other hand, given a relative Rota-Baxter operator $r: \mathcal{V} \rightarrow \mathfrak{g}$, its lift $\widetilde{r}: \mathfrak{g} \oplus \mathcal{V} \rightarrow \mathfrak{g} \oplus \mathcal{V}$ defined by $\widetilde{r} (x, v) = (r(v), 0)$, for $(x, v) \in \mathfrak{g} \oplus \mathcal{V}$ is a Nijenhuis operator on the semidirect product Lie algebra $(\mathfrak{g} \oplus \mathcal{V}, [~,~]_\ltimes)$, where the Lie bracket $[~,~]_\ltimes$ is given by
\begin{align}\label{semid}
    [(x, u), (y, v)]_\ltimes = ([x, y]_\mathfrak{g} ~\! , ~\! \rho_x v - \rho_y u), \text{ for } (x, u) , (y, v) \in \mathfrak{g} \oplus \mathcal{V}.
\end{align}
 Hence one may consider the cochain complex $\{ \oplus_{n=0}^\infty \mathrm{Hom} (\wedge^n (\mathfrak{g} \oplus \mathcal{V}), \mathfrak{g} \oplus \mathcal{V} ), ~ \! d_{\widetilde{r}} \}$ associated to the Nijenhuis operator $\widetilde{r}$. Then it can be easily checked that 
\begin{align*}
    d_{\widetilde{r}} \big(  \mathrm{Hom} (\wedge^n \mathcal{V}, \mathfrak{g})   \big) \subset  \mathrm{Hom} (\wedge^{n+1} \mathcal{V}, \mathfrak{g}), \text{ for all } n.
\end{align*}
Further, while restricting to the space $\mathrm{Hom} (\wedge^n \mathcal{V}, \mathfrak{g})$, the map $d_{\widetilde{r}}$ coincides with the map $d_r$ given in (\ref{dr-1}), (\ref{dr-2}). Hence the cochain complex $\{  \oplus_{n=0}^\infty \mathrm{Hom} (\wedge^n \mathcal{V}, \mathfrak{g}), ~\!  d_r  \}$ associated to the relative Rota-Baxter operator $r$ is a subcomplex of the cochain complex $\{ \oplus_{n=0}^\infty \mathrm{Hom} (\wedge^n (\mathfrak{g} \oplus \mathcal{V}), \mathfrak{g} \oplus \mathcal{V} ), ~ \! d_{\widetilde{r}} \}$ associated to the Nijenhuis operator $\widetilde{r}$.

\medskip

Let $(\mathfrak{g}, [~, ~]_\mathfrak{g})$ be a Lie algebra and $N: \mathfrak{g} \rightarrow \mathfrak{g}$ be a Nijenhuis operator on it. Then it is easy to see from the expressions (\ref{dn-exp}) and (\ref{dn0})  that the coboundary map $d_N$ cannot be expressed as the Chevalley-Eilenberg coboundary operator of the deformed Lie algebra $\mathfrak{g}^N = (\mathfrak{g}, [~,~]_\mathfrak{g}^N)$ with coefficients in any representation. In particular, the cohomology of the Nijenhuis operator $N$ is not the same as the Chevalley-Eilenberg cohomology of the deformed Lie algebra with coefficients in the adjoint representation. However, in the next result, we show that there is a homomorphism from the cohomology of the Nijenhuis operator $N$ to the cohomology of the deformed Lie algebra. For each $n \geq 0$, we first define a map 
\begin{align*}
    \Phi_n : \mathrm{Hom} (\wedge^n \mathfrak{g}, \mathfrak{g}) \rightarrow \mathrm{Hom} (\wedge^{n+1} \mathfrak{g}, \mathfrak{g})  ~~~\text{ by } ~~~ \Phi_n (f) := (-1)^{n+1} ~\! \delta_{\mathrm{CE}} (f), \text{ for } f \in \mathrm{Hom} (\wedge^n \mathfrak{g}, \mathfrak{g}),
\end{align*}
where $\delta_\mathrm{CE}$ is the Chevalley-Eilenberg coboundary operator of the Lie algebra $(\mathfrak{g}, [~, ~]_\mathfrak{g})$ with coefficients in the adjoint representation.

\begin{prop}\label{prop-mor}
    The collection of maps $\{ \Phi_n \}_{n=0}^\infty$ satisfy $\delta^N_\mathrm{CE} \circ \Phi_n = \Phi_{n+1} \circ d_N$, for all $n$, where $\delta_\mathrm{CE}^N$ is the Chevalley-Eilenberg coboundary operator of the deformed Lie algebra $\mathfrak{g}^N = (\mathfrak{g}, [~, ~]_\mathfrak{g}^N)$ with coefficients in the adjoint representation. As a consequence, there is a homomorphism $ H^\bullet (N) \rightarrow H^{\bullet +1}_\mathrm{CE} (\mathfrak{g}^N)$ from the cohomology of the Nijenhuis operator $N$ to the cohomology of the deformed Lie algebra. 
\end{prop}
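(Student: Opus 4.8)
The plan is to rewrite $d_N$, $\delta_\mathrm{CE}$ and $\delta_\mathrm{CE}^N$ in terms of the graded brackets $[~,~]_\mathsf{FN}$, $[~,~]_\mathsf{NR}$ and to exploit the compatibility (\ref{bracket-pres}). Write $\mu \in \mathrm{Hom}(\wedge^2\mathfrak{g},\mathfrak{g})$ for the Lie bracket and $\mu^N \in \mathrm{Hom}(\wedge^2\mathfrak{g},\mathfrak{g})$ for the deformed bracket, so $\mu^N(x,y) = [x,y]_\mathfrak{g}^N$. The first thing I would record is the identity $\delta_\mathrm{CE}(N) = \mu^N$. This follows from $\delta_\mathrm{CE}(N) = -[\mu, N]_\mathsf{NR}$ by expanding $[\mu,N]_\mathsf{NR} = i_\mu N - i_N \mu$: one finds $(i_\mu N)(x,y) = N[x,y]_\mathfrak{g}$ and $(i_N\mu)(x,y) = [N(x),y]_\mathfrak{g} + [x,N(y)]_\mathfrak{g}$, whence $\delta_\mathrm{CE}(N)(x,y) = [N(x),y]_\mathfrak{g} + [x,N(y)]_\mathfrak{g} - N[x,y]_\mathfrak{g} = \mu^N(x,y)$.

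Next, for $f \in \mathrm{Hom}(\wedge^n\mathfrak{g},\mathfrak{g})$ with $n \geq 1$, I would combine $d_N f = [N,f]_\mathsf{FN}$ with (\ref{bracket-pres}):
\begin{align*}
\delta_\mathrm{CE}(d_N f) = \delta_\mathrm{CE}\big([N,f]_\mathsf{FN}\big) = [\delta_\mathrm{CE}N,\, \delta_\mathrm{CE}f]_\mathsf{NR} = [\mu^N,\, \delta_\mathrm{CE}f]_\mathsf{NR} = -\,\delta_\mathrm{CE}^N(\delta_\mathrm{CE}f),
\end{align*}
where the last equality is just the description of the Chevalley-Eilenberg differential of $\mathfrak{g}^N$ with adjoint coefficients as $-[\mu^N, -]_\mathsf{NR}$. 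Hence $\delta_\mathrm{CE}^N \circ \delta_\mathrm{CE} = -\,\delta_\mathrm{CE} \circ d_N$ on $\mathrm{Hom}(\wedge^{\geq 1}\mathfrak{g},\mathfrak{g})$. Plugging in $\Phi_n = (-1)^{n+1}\delta_\mathrm{CE}$ and noting that $d_N f$ has arity $n+1$, the left-hand side $\delta_\mathrm{CE}^N(\Phi_n f) = (-1)^{n+1}\delta_\mathrm{CE}^N(\delta_\mathrm{CE}f) = (-1)^{n}\delta_\mathrm{CE}(d_N f)$ matches the right-hand side $\Phi_{n+1}(d_N f) = (-1)^{n+2}\delta_\mathrm{CE}(d_N f) = (-1)^{n}\delta_\mathrm{CE}(d_N f)$. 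The case $n = 0$ is outside the scope of the graded-bracket formalism, since the Fr\"olicher–Nijenhuis bracket lives on cochains of arity $\geq 1$ and $d_N$ on $\mathfrak{g}$ is given by the ad hoc formula (\ref{dn0}); so I would check $\delta_\mathrm{CE}^N \circ \Phi_0 = \Phi_1 \circ d_N$ by a short direct substitution, using (\ref{dn0}), the degree-$0$ and degree-$1$ instances of (\ref{ce-diff}), and the definition of $[~,~]_\mathfrak{g}^N$.

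Finally, the identity $\delta_\mathrm{CE}^N \circ \Phi_n = \Phi_{n+1} \circ d_N$ says exactly that $\Phi = \{\Phi_n\}_{n\geq 0}$ is a morphism of cochain complexes from $\{\oplus_n \mathrm{Hom}(\wedge^n\mathfrak{g},\mathfrak{g}),\, d_N\}$ to $\{\oplus_n \mathrm{Hom}(\wedge^{n+1}\mathfrak{g},\mathfrak{g}),\, \delta_\mathrm{CE}^N\}$, and the cohomology of the latter complex in degree $n$ is $H^{n+1}_\mathrm{CE}(\mathfrak{g}^N)$; passing to cohomology then gives the homomorphism $H^\bullet(N) \to H^{\bullet+1}_\mathrm{CE}(\mathfrak{g}^N)$. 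I expect no genuine obstacle here: the real work is pinning down the sign in $\delta_\mathrm{CE}(N) = \mu^N$ and carrying the powers of $-1$ from the definition of $\Phi_n$ consistently through the computation, together with the minor nuisance of treating $n = 0$ by hand because the complexes are defined by a non-bracket formula in lowest degree.
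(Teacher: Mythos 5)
Your proposal is correct and follows essentially the same route as the paper: both rest on $d_Nf=[N,f]_{\mathsf{FN}}$, the compatibility $\delta_{\mathrm{CE}}([N,f]_{\mathsf{FN}})=[\delta_{\mathrm{CE}}N,\delta_{\mathrm{CE}}f]_{\mathsf{NR}}$, the identification $\delta_{\mathrm{CE}}N=\mu^{N}$, and the description $\delta^{N}_{\mathrm{CE}}=-[\mu^{N},-]_{\mathsf{NR}}$, with the same sign bookkeeping. Your two refinements — verifying $\delta_{\mathrm{CE}}N=\mu^{N}$ explicitly and treating $n=0$ by a direct substitution (which indeed reduces to the Jacobi identity) — are sound additions to what the paper leaves implicit.
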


\begin{proof}
    For any $f \in \mathrm{Hom} (\wedge^n \mathfrak{g}, \mathfrak{g})$, we have
    \begin{align*}
        (\Phi_{n+1} \circ d_N) (f) = \Phi_{n+1} ([N, f]_\mathsf{FN}) =~& (-1)^{n+2} ~\! \delta_\mathrm{CE} ([N, f]_\mathsf{FN}) \\
        =~& (-1)^{n+2} ~\! [\delta_\mathrm{CE} N, \delta_\mathrm{CE} f]_\mathsf{NR} \quad (\text{by } (\ref{bracket-pres}))  \\
        =~& - [\mu^N , (-1)^{n+1} ~\! \delta_\mathrm{CE} f]_\mathsf{NR} \\ =~& - [\mu^N, \Phi_n (f)]_\mathsf{NR} = (\delta_\mathrm{CE}^N \circ \Phi_n) (f).
    \end{align*}
    Here $\mu^N = \delta_\mathrm{CE} N \in \mathrm{Hom}(\wedge^2 \mathfrak{g}, \mathfrak{g})$ is the element corresponding to the deformed Lie bracket $[~,~]_\mathfrak{g}^N$, and $\delta_\mathrm{CE}^N$ is the Chevalley-Eilenberg coboundary operator of the deformed Lie algebra with coefficients in the adjoint representation.
\end{proof}

Given a Lie algebra endowed with a Nijenhuis operator, next, we consider deformations of the Nijenhuis operator, keeping the underlying Lie algebra intact. Let $(\mathfrak{g}, [~, ~]_\mathfrak{g})$ be a Lie algebra and $N : \mathfrak{g} \rightarrow \mathfrak{g}$ be a Nijenhuis operator on it. Then we have seen that $N$ is a Maurer-Cartan element in the Fr\"{o}licher-Nijenhuis graded Lie algebra $ \big(  \oplus_{n=1}^\infty \mathrm{Hom} (\wedge^n \mathfrak{g}, \mathfrak{g}), [~,~]_\mathsf{FN} \big).$ As a result, the triple  $ \big(  \oplus_{n=1}^\infty \mathrm{Hom} (\wedge^n \mathfrak{g}, \mathfrak{g}), [~,~]_\mathsf{FN} , d_N \big)$ is a differential graded Lie algebra. The following result shows that this differential graded Lie algebra controls the linear deformations of the Nijenhuis operator $N$.

\begin{prop}
    Let $(\mathfrak{g}, [~,~]_\mathfrak{g})$ be a Lie algebra and $N : \mathfrak{g} \rightarrow \mathfrak{g}$ be a Nijenhuis operator on it. Then for any linear map $N': \mathfrak{g} \rightarrow \mathfrak{g}$, the sum $N + N'$ is also a Nijenhuis operator on the Lie algebra $(\mathfrak{g}, [~,~]_\mathfrak{g})$ if and only if $N'$ is a Maurer-Cartan element in the differential graded Lie algebra  $ \big(  \oplus_{n=1}^\infty \mathrm{Hom} (\wedge^n \mathfrak{g}, \mathfrak{g}), [~,~]_\mathsf{FN} , d_N \big)$.
\end{prop}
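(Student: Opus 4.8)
The plan is to rewrite the Nijenhuis condition for $N+N'$ as an identity inside the Fr\"{o}licher--Nijenhuis algebra and then to recognize that identity as the Maurer--Cartan equation of the differential graded Lie algebra in the statement. Recall from the end of Section \ref{sec2} that a linear map $T : \mathfrak{g} \to \mathfrak{g}$ is a Nijenhuis operator on $(\mathfrak{g}, [~,~]_\mathfrak{g})$ if and only if $[T, T]_\mathsf{FN} = 0$. Taking $T = N + N'$ and using the bilinearity of $[~,~]_\mathsf{FN}$, I obtain
\begin{align*}
    [N + N',\, N + N']_\mathsf{FN} = [N, N]_\mathsf{FN} + [N, N']_\mathsf{FN} + [N', N]_\mathsf{FN} + [N', N']_\mathsf{FN}.
\end{align*}

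Next I would simplify the first two cross terms. By hypothesis $N$ is a Nijenhuis operator, so $[N, N]_\mathsf{FN} = 0$. Since $N$ and $N'$ both lie in $\mathrm{Hom}(\wedge^1 \mathfrak{g}, \mathfrak{g})$, the graded antisymmetry of the Fr\"{o}licher--Nijenhuis bracket in this (wedge-)degree gives $[N, N']_\mathsf{FN} = [N', N]_\mathsf{FN}$; this is also transparent from the explicit formula for $[N, N']_\mathsf{FN}(x, y)$ displayed in Section \ref{sec2}, which is manifestly symmetric under $N \leftrightarrow N'$. Using in addition the definition $d_N(N') = [N, N']_\mathsf{FN}$ from (\ref{dn-map}), the identity above collapses to
\begin{align*}
    [N + N',\, N + N']_\mathsf{FN} = 2\, d_N(N') + [N', N']_\mathsf{FN}.
\end{align*}

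Consequently the chain of equivalences
\begin{align*}
    N + N' \text{ is a Nijenhuis operator on } (\mathfrak{g}, [~,~]_\mathfrak{g}) ~&\Longleftrightarrow~ [N + N',\, N + N']_\mathsf{FN} = 0 \\
    &\Longleftrightarrow~ d_N(N') + \tfrac{1}{2} [N', N']_\mathsf{FN} = 0
\end{align*}
holds, where in the last step one divides by $2$ (using that $2$ is invertible in the ground field ${\bf k}$). The final equation is exactly the Maurer--Cartan equation for the degree-$1$ element $N' \in \mathrm{Hom}(\wedge^1 \mathfrak{g}, \mathfrak{g})$ in the differential graded Lie algebra $\big( \oplus_{n=1}^\infty \mathrm{Hom}(\wedge^n \mathfrak{g}, \mathfrak{g}),\, [~,~]_\mathsf{FN},\, d_N \big)$, whose differential graded Lie algebra structure was recorded just before the statement. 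I do not expect any genuine obstacle here: once the Maurer--Cartan characterization of Nijenhuis operators and the differential graded Lie algebra $(\,\cdot\,, [~,~]_\mathsf{FN}, d_N)$ are in place, the proof is purely formal. The only points deserving attention are the sign in the cross-term identity $[N', N]_\mathsf{FN} = [N, N']_\mathsf{FN}$ and the ensuing factor of $2$, which is where the characteristic-zero hypothesis on ${\bf k}$ enters.
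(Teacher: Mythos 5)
Your proof is correct and follows essentially the same route as the paper: expand $[N+N', N+N']_\mathsf{FN}$ by bilinearity, use $[N,N]_\mathsf{FN}=0$ and the symmetry of the cross terms to get $2\big(d_N(N') + \tfrac{1}{2}[N',N']_\mathsf{FN}\big)$, and identify the vanishing of this with the Maurer--Cartan equation. The extra remarks on the symmetry $[N',N]_\mathsf{FN}=[N,N']_\mathsf{FN}$ and the invertibility of $2$ are fine points the paper leaves implicit.
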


\begin{proof}
    We observe that
    \begin{align*}
        [N + N', N + N']_\mathsf{FN} =~& [N, N ]_\mathsf{FN} + [N, N']_\mathsf{FN} + [N', N]_\mathsf{FN} + [N', N']_\mathsf{FN} \\
        =~& 2 [N, N']_\mathsf{FN} + [N', N']_\mathsf{FN} = 2 \big(  [N, N']_\mathsf{FN} + \frac{1}{2} [N', N']_\mathsf{FN}  \big).
    \end{align*}
    This shows that $[N + N', N + N']_\mathsf{FN} = 0$ if and only if $N'$ is a Maurer-Cartan element in the above differential graded Lie algebra.
\end{proof}

In the following, we consider finite order deformations of a Nijenhuis operator $N$ and investigate the criterion for their extensions. Let $(\mathfrak{g}, [~, ~]_\mathfrak{g})$ be a Lie algebra and $N : \mathfrak{g} \rightarrow \mathfrak{g}$ be a Nijenhuis operator on it. For any $n \in \{ 1, 2, \ldots \} $, consider the space $\mathfrak{g} [[t]] / (t^{n+1})$ of all polynomials in the variable $t$ of degree $\leq n$ with coefficients from $\mathfrak{g}$. Then $\mathfrak{g} [[t]] / (t^{n+1})$ is obviously a ${\bf k} [[t]]/(t^{n+1})$-module. Note that the Lie bracket $[~, ~ ]_\mathfrak{g} : \mathfrak{g} \times \mathfrak{g} \rightarrow \mathfrak{g}$  on $\mathfrak{g}$ can be extended to a bracket on $\mathfrak{g} [[t]] / (t^{n+1})$ simply by using ${\bf k} [[t]]/(t^{n+1})$-bilinearity. We denote the extended bracket on $\mathfrak{g} [[t]] / (t^{n+1})$ by the same notation $[~,~]_\mathfrak{g}$. Then it turns out that $ \big( \mathfrak{g} [[t]] / (t^{n+1}) , [~,~ ]_\mathfrak{g} \big)$ is a Lie algebra in the category of ${\bf k} [[t]]/(t^{n+1})$-modules.

\begin{defn}
    (i) A {\bf deformation of order $n$} of the Nijenhuis operator $N$ is a sum
    \begin{align*}
        N_t^n := N_0 + t N_1 + \cdots + t^n N_n \in \mathrm{Hom} (\mathfrak{g}, \mathfrak{g}) [[t]] / (t^{n+1}) ~~ \text{ with } N_0 = N
    \end{align*}
    such that its ${\bf k} [[t]]/(t^{n+1})$-linear extension (denoted by the same notation) $N_t : \mathfrak{g} [[t]] / (t^{n+1}) \rightarrow \mathfrak{g} [[t]] / (t^{n+1})$ is a Nijenhuis operator on the Lie algebra $ \big( \mathfrak{g} [[t]] / (t^{n+1}) , [~,~ ]_\mathfrak{g} \big)$ in the category of ${\bf k} [[t]]/(t^{n+1})$-modules.

    \medskip

    (ii) Let $N_t^n := N_0 + t N_1 + \cdots + t^n N_n$ be a deformation of order $n$ of the Nijenhuis operator $N$. It is said to be {\bf extensible} if there exists a linear map $N_{n+1} : \mathfrak{g} \rightarrow \mathfrak{g}$ such that the sum 
    \begin{align*}
            N^{n+1}_t := N_t^n + t^{n+1} N_{n+1} = N_0 + tN_1 + \cdots + t^n N_n + t^{n+1} N_{n+1}
    \end{align*}
    defines a deformation of order $n+1$ of the Nijenhuis operator $N$.
\end{defn}

Let $N_t^n = \sum_{i=0}^n t^i N_i$ be a deformation of order $n$. Then for any $0 \leq p \leq n$, we have $\sum_{\substack{i+ j = p \\ i , j \geq 0}} [N_i, N_j ]_\mathsf{FN} = 0$ or equivalently,
\begin{align}\label{def-def}
    d_N (N_p) = - \frac{1}{2} \sum_{\substack{i+j = p \\ i , j \geq 1}} [N_i ,N_j]_\mathsf{FN}.
\end{align}
We now define an element $\mathrm{Ob} (N_t^n) \in \mathrm{Hom} (\wedge^2 \mathfrak{g}, \mathfrak{g})$ by
  $  \mathrm{Ob} (N_t^n) := - \frac{1}{2} \sum_{\substack{i+j = n+1 \\ i, j \geq 1}} [N_i, N_j]_\mathsf{FN}.$
Note that $\mathrm{Ob} (N^n_t)$ depends only on the deformation $N_t^n$. Moreover, we observe that
\begin{align*}
    d_N \big(   \mathrm{Ob} (N^n_t)  \big) =~& - \frac{1}{2} \sum_{\substack{i+j = n+1 \\ i , j \geq 1}} [ N, [N_i, N_j]_\mathsf{FN} ]_\mathsf{FN} \\
    =~& - \frac{1}{2} \sum_{\substack{i+j = n+1 \\ i , j \geq 1}} \big(    [[N, N_i]_\mathsf{FN} , N_j]_\mathsf{FN} - [N_i , [N, N_j]_\mathsf{FN} ]_\mathsf{FN} \big) \\
    =~& \frac{1}{4} \sum_{\substack{i_1 + i_2 + j = n+1 \\ i_1, i_2 , j \geq 1}} [[N_{i_1}, N_{i_2}]_\mathsf{FN} , N_j]_\mathsf{FN} - \frac{1}{4} \sum_{\substack{i+ j_1 + j_2 = n+1 \\ i, j_1, j_2 \geq 1}} [N_i , [N_{j_1} , N_{j_2} ]_\mathsf{FN} ]_\mathsf{FN} \quad (\text{by } (\ref{def-def})) \\
    =~& \frac{1}{2} \sum_{\substack{i+j + k = n \\ i, j,  k \geq 1}} [[N_i, N_j ]_\mathsf{FN} , N_k]_\mathsf{FN} = 0.
\end{align*}
This shows that $\mathrm{Ob} (N_t^n)$ is a $2$-cocycle in the cochain complex associated to the Nijenhuis operator $N$. Hence it gives rise to a cohomology class $[ \mathrm{Ob} (N_t^n)] \in H^2 (N)$, called the {\em obstruction class}. Then we have the following.

\begin{thm}
    Let $N_t^n$ be a deformation of order $n$ of the Nijenhuis operator $N$. Then it is extensible if and only if the corresponding obstruction class  $[ \mathrm{Ob} (N_t^n)] $ vanishes. In particular, if $H^2 (N) = 0$ then any finite order deformation of $N$ is extensible.
\end{thm}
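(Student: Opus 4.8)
The plan is to reduce the extensibility of $N_t^n$ to a single cohomological equation. First I would write out the Nijenhuis (Maurer--Cartan) condition for the candidate extension $N_t^{n+1} = N_t^n + t^{n+1} N_{n+1}$, namely $[N_t^{n+1}, N_t^{n+1}]_\mathsf{FN} = 0$ in $\mathrm{Hom}(\mathfrak{g},\mathfrak{g})[[t]]/(t^{n+2})$, and expand it using $\mathbf{k}[[t]]/(t^{n+2})$-bilinearity of the Fr\"olicher--Nijenhuis bracket. Since $N_{n+1}$ occurs only in the coefficient of $t^{n+1}$ and the lowest-order term of $N_t^n$ is $N_0 = N$ in degree $t^0$, every product term containing $N_{n+1}$ has $t$-degree at least $n+1$; hence the coefficients of $t^0, \ldots, t^n$ in $[N_t^{n+1}, N_t^{n+1}]_\mathsf{FN}$ coincide with those of $[N_t^n, N_t^n]_\mathsf{FN}$, which already vanish because $N_t^n$ is a deformation of order $n$. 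Thus $N_t^{n+1}$ is a deformation of order $n+1$ if and only if the single coefficient of $t^{n+1}$ vanishes.

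Next I would isolate that coefficient. It equals $\sum_{i+j = n+1,\, i,j \geq 0} [N_i, N_j]_\mathsf{FN}$; separating off the two terms with $i = 0$ or $j = 0$ and using that the Fr\"olicher--Nijenhuis bracket is symmetric on linear maps (already visible in the identity $[N+N', N+N']_\mathsf{FN} = 2[N, N']_\mathsf{FN} + [N', N']_\mathsf{FN}$ used earlier), this coefficient equals $2[N, N_{n+1}]_\mathsf{FN} + \sum_{i+j = n+1,\, i,j \geq 1}[N_i, N_j]_\mathsf{FN} = 2\, d_N(N_{n+1}) - 2\,\mathrm{Ob}(N_t^n)$. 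Hence $N_t^n$ is extensible if and only if there is a linear map $N_{n+1} : \mathfrak{g} \to \mathfrak{g}$ with $d_N(N_{n+1}) = \mathrm{Ob}(N_t^n)$, i.e. if and only if the $2$-cocycle $\mathrm{Ob}(N_t^n)$ (it is a cocycle by the computation preceding the theorem) lies in the image of $d_N : \mathrm{Hom}(\mathfrak{g},\mathfrak{g}) \to \mathrm{Hom}(\wedge^2\mathfrak{g}, \mathfrak{g})$. This is exactly the statement that the obstruction class $[\mathrm{Ob}(N_t^n)] \in H^2(N)$ vanishes.

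The ``in particular'' clause is then immediate: if $H^2(N) = 0$, every $2$-cocycle is a coboundary, so for any finite-order deformation the class $[\mathrm{Ob}(N_t^n)]$ is automatically zero and $N_t^n$ is extensible. There is essentially no analytic or combinatorial content here; the only point requiring care is the bookkeeping --- checking that adjoining $t^{n+1} N_{n+1}$ leaves the already-satisfied lower-order equations intact and that the symmetry factors and normalization constants in the expansion of $[N_t^{n+1}, N_t^{n+1}]_\mathsf{FN}$ match the definitions of $\mathrm{Ob}(N_t^n)$ and of $d_N = [N, -]_\mathsf{FN}$. I expect that correctly indexing the shuffle sums, rather than any conceptual difficulty, is the main thing to get right.
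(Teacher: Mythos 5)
Your argument is correct and is essentially the same as the paper's: both reduce extensibility to the single order-$(n+1)$ equation $\sum_{i+j=n+1}[N_i,N_j]_{\mathsf{FN}}=0$, rewrite it as $d_N(N_{n+1})=\mathrm{Ob}(N_t^n)$, and conclude that extensibility is equivalent to $\mathrm{Ob}(N_t^n)$ being a coboundary, hence to the vanishing of its class in $H^2(N)$. Your signs and normalizations match the paper's definition of $\mathrm{Ob}(N_t^n)$, so there is nothing to fix.
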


\begin{proof}
    Suppose $N_t^n$ is extensible. Then there exists a linear map $N_{n+1} : \mathfrak{g} \rightarrow \mathfrak{g}$ such that the sum $N^{n+1}_t := N^n_t + t^{n+1} N_{n+1}$ is a deformation of order $n+1$. Therefore, we get that $\sum_{\substack{i+j = n+1 \\ i , j \geq 0}} [N_i, N_j ]_\mathsf{FN} = 0$ which in turn implies that $d_N (N_{n+1}) = - \frac{1}{2} \sum_{ \substack{i+j = n+1 \\ i , j \geq 1}} [N_i , N_j ]_\mathsf{FN}$. This shows that $\mathrm{Ob} (N_t^n)$ is a coboundary and hence the corresponding cohomology class vanishes.

    Conversely, suppose the cohomology class $[\mathrm{Ob} (N_t^n)]$ is trivial. Then we have $\mathrm{Ob} (N_t^n) = d_N (N_{n+1})$, for some linear map $N_{n+1} : \mathfrak{g} \rightarrow \mathfrak{g}$. As a result, we obtain that $N_t^{n+1} := N_t^n + t^{n+1} N_{n+1}$ is a deformation of order $n+1$. Hence $N^n_t$ is extensible. 
\end{proof}

\subsection{Representations and cohomology of Nijenhuis Lie algebras} In this subsection, we aim to define the cohomology of a Nijenhuis Lie algebra with coefficients in a suitable representation.

\begin{defn}\label{defn-nla}
    A {\bf Nijenhuis Lie algebra} is a Lie algebra $(\mathfrak{g}, [~, ~]_\mathfrak{g})$ endowed with a distinguished Nijenhuis operator $N : \mathfrak{g} \rightarrow \mathfrak{g}$ on it. We denote a Nijenhuis Lie algebra as above simply by the triple $(\mathfrak{g}, [~, ~]_\mathfrak{g}, N)$.
\end{defn}

Let $(\mathfrak{g}, [~, ~]_\mathfrak{g}, N)$ and $(\mathfrak{h}, [~, ~]_\mathfrak{h}, S)$ be two Nijenhuis Lie algebras. A {\em homomorphism} of Nijenhuis Lie algebras from $(\mathfrak{g}, [~, ~]_\mathfrak{g}, N)$ to $(\mathfrak{h}, [~, ~]_\mathfrak{h}, S)$ is a Lie algebra homomorphism $\varphi : \mathfrak{g} \rightarrow \mathfrak{h}$ satisfying additionally $S \circ \varphi = \varphi \circ N$. Further, it is said to be an isomorphism of Nijenhuis Lie algebras if $\varphi$ is also bijective.

\begin{defn}
    Let $(\mathfrak{g}, [~, ~]_\mathfrak{g}, N)$ be a Nijenhuis Lie algebra. A {\bf Nijenhuis representation} of this Nijenhuis Lie algebra is a triple $(\mathcal{V}, \rho, S)$, where $(\mathcal{V}, \rho)$ is a usual representation of the Lie algebra $(\mathfrak{g}, [~, ~]_\mathfrak{g})$ and $S : \mathcal{V} \rightarrow \mathcal{V}$ is a linear map satisfying
    \begin{align*}
        \rho_{N(x)} S (v) = S  ( \rho_{N (x)} v + \rho_x S (v) - S (\rho_x v)), \text{ for } x \in \mathfrak{g}, v \in \mathcal{V}.
    \end{align*}
\end{defn}

\begin{exam}
    Let $(\mathfrak{g}, [~,~]_\mathfrak{g}, N)$ be a Nijenhuis Lie algebra.
    \begin{itemize}
        \item[(i)] Then the triple $(\mathfrak{g}, \mathrm{ad}_\mathfrak{g}, N)$ is a Nijenhuis representation of it, where $\mathrm{ad}_\mathfrak{g} : \mathfrak{g} \rightarrow \mathrm{End}(\mathfrak{g})$ is the adjoint representation given by $(\mathrm{ad}_\mathfrak{g})_x (y) = [x, y]_\mathfrak{g}$, for $x, y \in \mathfrak{g}$. This is called the {\em adjoint Nijenhuis representation} of the Nijenhuis Lie algebra $(\mathfrak{g}, [~,~]_\mathfrak{g}, N)$.
        \item[(ii)] For any representation $(\mathcal{V}, \rho)$ of the Lie algebra $(\mathfrak{g}, [~,~]_\mathfrak{g})$, the triples $(\mathcal{V}, \rho, 0)$, $(\mathcal{V}, \rho, \mathrm{Id}_\mathcal{V})$ and $(\mathcal{V}, \rho,  - \mathrm{Id}_\mathcal{V})$ are all Nijenhuis representations of the Nijenhuis Lie algebra $(\mathfrak{g}, [~,~]_\mathfrak{g}, N)$.
    \end{itemize}
\end{exam}

\begin{exam}\label{power-repn}
    Let $(\mathfrak{g}, [~,~]_\mathfrak{g}, N)$ be a Nijenhuis Lie algebra and $(\mathcal{V}, \rho, S)$ be a Nijenhuis representation of it. Then for any $k \geq 0$, the triple $(\mathcal{V}, \rho, S^k)$ is a Nijenhuis representation of the Nijenhuis Lie algebra $(\mathfrak{g}, [~,~]_\mathfrak{g}, N^k)$. More generally, for any $k, l \geq 0$, the triple $(\mathcal{V}, \rho^l, S^k)$ is a Nijenhuis representation of the Nijenhuis Lie algebra $(\mathfrak{g}, [~,~]_\mathfrak{g}^{N^l}, N^k)$, where 
    \begin{align*}
        \rho^l_x (v) := \rho_{N^l (x)} v + \rho_x S^l (v) - S^l (\rho_x v), \text{ for } x \in \mathfrak{g}, v \in \mathcal{V}.
    \end{align*}
\end{exam}

Given a Nijenhuis Lie algebra and a Nijenhuis representation, one may construct the semidirect product Nijenhuis Lie algebra. The precise statement is given below.

\begin{prop}\label{prop-semid-nlie}
    Let $(\mathfrak{g}, [~,~]_\mathfrak{g}, N)$ be a Nijenhuis Lie algebra and $(\mathcal{V}, \rho, S)$ be a Nijenhuis representation of it. Then $(\mathfrak{g} \oplus \mathcal{V}, [~,~]_\ltimes, N \oplus S)$ is a Nijenhuis Lie algebra, where $[~,~]_\ltimes$ is the semidirect product Lie bracket on $\mathfrak{g} \oplus \mathcal{V}$ given in (\ref{semid}).
\end{prop}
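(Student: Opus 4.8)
The plan is to verify directly that the linear map $\widetilde{N} := N \oplus S : \mathfrak{g} \oplus \mathcal{V} \to \mathfrak{g} \oplus \mathcal{V}$, $(x,v) \mapsto (N(x), S(v))$, is a Nijenhuis operator on the semidirect product Lie algebra $(\mathfrak{g} \oplus \mathcal{V}, [~,~]_\ltimes)$. That the latter is a Lie algebra is standard (it follows from $(\mathcal{V},\rho)$ being a representation of $(\mathfrak{g},[~,~]_\mathfrak{g})$), so by Definition \ref{defn-nla} the only thing to check is the defining identity
\[
[\widetilde{N}(x,u), \widetilde{N}(y,v)]_\ltimes = \widetilde{N}\big( [\widetilde{N}(x,u),(y,v)]_\ltimes + [(x,u),\widetilde{N}(y,v)]_\ltimes - \widetilde{N}[(x,u),(y,v)]_\ltimes \big)
\]
for all $(x,u),(y,v) \in \mathfrak{g}\oplus\mathcal{V}$.

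First I would expand both sides using the formula $[(x,u),(y,v)]_\ltimes = ([x,y]_\mathfrak{g}, \rho_x v - \rho_y u)$ from (\ref{semid}) together with the block form of $\widetilde{N}$. Since the bracket and $\widetilde{N}$ both respect the decomposition $\mathfrak{g}\oplus\mathcal{V}$, the desired identity splits into its $\mathfrak{g}$-component and its $\mathcal{V}$-component, which can be treated separately. The $\mathfrak{g}$-component reads $[N(x),N(y)]_\mathfrak{g} = N\big( [N(x),y]_\mathfrak{g} + [x,N(y)]_\mathfrak{g} - N[x,y]_\mathfrak{g}\big)$, which is precisely the hypothesis that $N$ is a Nijenhuis operator on $\mathfrak{g}$, so this component holds automatically.

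It then remains to check the $\mathcal{V}$-component. Collecting terms, the left-hand side contributes $\rho_{N(x)} S(v) - \rho_{N(y)} S(u)$, while the right-hand side contributes $S$ applied to $\big(\rho_{N(x)}v + \rho_x S(v) - S(\rho_x v)\big) - \big(\rho_{N(y)}u + \rho_y S(u) - S(\rho_y u)\big)$. Now I would invoke the Nijenhuis representation compatibility $\rho_{N(x)}S(v) = S\big(\rho_{N(x)}v + \rho_x S(v) - S(\rho_x v)\big)$ once with the pair $(x,v)$ and once with the pair $(y,u)$; subtracting the two instances turns the right-hand side into $\rho_{N(x)}S(v) - \rho_{N(y)}S(u)$, matching the left-hand side. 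This is the only genuinely computational step, and the main (minor) obstacle is purely bookkeeping: keeping track of the six $\mathcal{V}$-valued terms produced by expanding the three inner brackets and recognizing them as exactly two copies of the representation axiom with a relative sign.

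Alternatively, one can phrase the whole argument through the Fr\"{o}licher--Nijenhuis bracket of $\mathfrak{g}\oplus\mathcal{V}$ and verify $[\widetilde{N},\widetilde{N}]_\mathsf{FN} = 0$ by the same component analysis, using the characterization of Nijenhuis operators as Maurer--Cartan elements recalled in Section \ref{sec2}. I would present the elementary computation above, since it is the shortest route and makes the role of the Nijenhuis representation axiom transparent.
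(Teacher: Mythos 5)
Your proof is correct: the component-wise expansion is exactly right, and the $\mathcal{V}$-component does reduce to two instances of the Nijenhuis representation axiom with a relative sign. The paper states this proposition without proof, but your direct verification is precisely the computation the paper carries out for the more general bicrossed product in Proposition \ref{prop-bicrossed-nlie}, of which this is the special case with $\mathfrak{h}$ abelian and $\nu = 0$.
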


Let $(\mathfrak{g}, [~,~]_\mathfrak{g})$ be a Lie algebra and $(\mathcal{V}, \rho)$ be any representation of it. Define $\rho^* : \mathfrak{g} \rightarrow \mathrm{End}(\mathcal{V}^*)$ by
\begin{align*}
    (\rho^*_x \alpha) (v) = - \langle \alpha, \rho_x v \rangle, \text{ for } x \in \mathfrak{g}, \alpha \in \mathcal{V}^*, v \in \mathcal{V}.
\end{align*}
Then $(\mathcal{V}^*, \rho^*)$ is also a representation of the Lie algebra $(\mathfrak{g}, [~,~]_\mathfrak{g})$. This representation is called the dual of the representation $(\mathcal{V}, \rho)$. However, if we have a Nijenhuis representation $(\mathcal{V}, \rho, S)$ of a Nijenhuis Lie algebra $(\mathfrak{g}, [~,~]_\mathfrak{g}, N)$, then $(\mathcal{V}^*, \rho^*, S^*)$ need not be a Nijenhuis representation in general. Let $(\mathfrak{g}, [~, ~]_\mathfrak{g}, N)$ be a Nijenhuis Lie algebra and $(\mathcal{V}, \rho)$ be a representation of the underlying Lie algebra $(\mathfrak{g}, [~,~]_\mathfrak{g})$. A linear map $S : \mathcal{V} \rightarrow \mathcal{V}$ is said to be {\bf admissible} to the Nijenhuis Lie algebra $(\mathfrak{g}, [~,~]_\mathfrak{g}, N)$ and the Lie algebra representation $(\mathcal{V}, \rho)$ if 
\begin{align*}
    S (\rho_{N (x)} v) + \rho_x S^2 (v) = \rho_{N (x)} S (v) + S (\rho_x S (v)), \text{ for all } x \in \mathfrak{g}, v \in \mathcal{V}.
\end{align*}
Then the triple $(\mathcal{V}^*, \rho^*, S^*)$ is a Nijenhuis representation of the Nijenhuis Lie algebra $(\mathfrak{g}, [~,~]_\mathfrak{g}, N)$. In particular, suppse a linear map $S : \mathfrak{g} \rightarrow \mathfrak{g}$ satisfies
\begin{align}\label{adm-first}
  S [N(x), y]_\mathfrak{g} + [x, S^2 (y)]_\mathfrak{g} = [ N (x), S (y)]_\mathfrak{g} + S [x, S(y)]_\mathfrak{g}, \text{ for } x, y \in \mathfrak{g}
\end{align}
(i.e. $S$ is admissible to the Nijenhuis Lie algebra $(\mathfrak{g}, [~, ~]_\mathfrak{g}, N)$ and the adjoint Lie algebra representation $(\mathfrak{g}, \mathrm{ad}_\mathfrak{g})$) then the triple $(\mathfrak{g}^*, \mathrm{ad}^*_\mathfrak{g}, S^*)$ is a Nijenhuis representation of the Nijenhuis Lie algebra $(\mathfrak{g}, [~,~]_\mathfrak{g}, N)$. In this case, we simply say that $S$ is admissible to the Nijenhuis Lie algebra $(\mathfrak{g}, [~,~]_\mathfrak{g}, N)$.

\medskip

We will now define the cohomology of a Nijenhuis Lie algebra (with coefficients in the adjoint Nijenhuis representation). First, given a Nijenhuis Lie algebra $(\mathfrak{g}, [~, ~]_\mathfrak{g}, N)$, we observe that there are two important cochain complexes, namely,
\begin{itemize}
    \item the Chevalley-Eilenberg cochain complex $\{ \oplus_{n=0}^\infty \mathrm{Hom} (\wedge^n \mathfrak{g}, \mathfrak{g}), \delta_\mathrm{CE} \}$ of the Lie algebra $(\mathfrak{g}, [~, ~]_\mathfrak{g})$ with coefficients in the adjoint representation,
    \item the cochain complex $\{ \oplus_{n=0}^\infty \mathrm{Hom} (\wedge^n \mathfrak{g}, \mathfrak{g}), d_N \}$ of the Nijenhuis operator $N$ defined on the Lie algebra $(\mathfrak{g}, [~,~]_\mathfrak{g})$.
\end{itemize}
For each $n \geq 0$, we now define a map $\partial^N : \mathrm{Hom} (\wedge^n \mathfrak{g}, \mathfrak{g}) \rightarrow \mathrm{Hom} (\wedge^n \mathfrak{g}, \mathfrak{g})$ by
\begin{align*}
    & \qquad \qquad \partial^N (x) = x, \text{ for } x \in \mathfrak{g},\\
    (\partial^N f) &(x_1, \ldots, x_n) = f (N (x_1), \ldots, N (x_n )) - \sum_{i=1}^n N \big(   f ( N (x_1), \ldots, x_i, \ldots, N (x_n)) \big) \\
   & + \sum_{1 \leq i < j \leq n} N^2 \big(    f (  N (x_1), \ldots, x_i, \ldots, x_j, \ldots, N (x_n)    )  \big) - \cdots + (-1)^n N^n (f (x_1, \ldots, x_n)),
\end{align*}
for $f \in \mathrm{Hom} (\wedge^n \mathfrak{g}, \mathfrak{g})$ and $x_1, \ldots, x_n \in \mathfrak{g}$. Then for any $f \in \mathrm{Hom} (\wedge^n \mathfrak{g}, \mathfrak{g})$ with $n \geq 0$, it is straightforward to verify that 
\begin{align}\label{useful}
   ( d_N \circ \partial^N )(f) = (\partial^N \circ \delta_\mathrm{CE}) (f).
\end{align}
This shows that the collection of maps $\{ \partial^N : \mathrm{Hom} (\wedge^n \mathfrak{g}, \mathfrak{g}) \rightarrow \mathrm{Hom} (\wedge^n \mathfrak{g}, \mathfrak{g}) \}_{n \geq 0}$ defines a homomorphism of cochain complexes from $\{  \oplus_{n=0}^\infty \mathrm{Hom} (\wedge^n \mathfrak{g}, \mathfrak{g}), \delta_\mathrm{CE} \}$ to the complex $\{ \oplus_{n=0}^\infty \mathrm{Hom} (\wedge^n \mathfrak{g}, \mathfrak{g}), d_N \}$. We will now consider the mapping cone induced by this homomorphism to define the cochain complex of the given Nijenhuis Lie algebra $(\mathfrak{g}, [~,~]_\mathfrak{g}, N)$. More precisely, we set
\begin{align*}
    C^n_\mathrm{NLie} (\mathfrak{g}, N) := \begin{cases} 0  & \text{ if } n =0,\\
    \mathrm{Hom} (\mathfrak{g}, \mathfrak{g}) & \text{ if } n =1,\\
    \mathrm{Hom} (\wedge^n \mathfrak{g}, \mathfrak{g}) \oplus \mathrm{Hom} (\wedge^{n-1} \mathfrak{g}, \mathfrak{g})  & \text{ if } n \geq 2
    \end{cases}
\end{align*}
and define a map $\delta_\mathrm{NLie} :  C^n_\mathrm{NLie} (\mathfrak{g}, N) \rightarrow  C^{n+1}_\mathrm{NLie} (\mathfrak{g}, N)$ by
\begin{align*}
    \delta_\mathrm{NLie} (f) := \big(    \delta_\mathrm{CE} (f) ~ \! , ~\! - \partial^N (f) \big)    ~~~ \text{ and } ~~~ \delta_\mathrm{NLie} (\chi, F) := \big(   \delta_\mathrm{CE} (\chi) ~ \! , ~\! d_N (F) + (-1)^n ~\! \partial^N (\chi)  \big),
\end{align*}
for $f \in C^1_\mathrm{NLie} (\mathfrak{g}, N) = \mathrm{Hom} (\mathfrak{g}, \mathfrak{g})$ and $(\chi, F) \in C^{n \geq 2}_\mathrm{NLie} (\mathfrak{g}, N) = \mathrm{Hom} (\wedge^n \mathfrak{g}, \mathfrak{g}) \oplus \mathrm{Hom} (\wedge^{n-1} \mathfrak{g}, \mathfrak{g})$. Then we have the following.

  \begin{prop}\label{d-nlie}
  The map $\delta_\mathrm{NLie}$ is a differential, i.e. $(\delta_\mathrm{NLie})^2 = 0$.
\end{prop}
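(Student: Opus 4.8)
The plan is to recognize the complex $C^\bullet_{\mathrm{NLie}}(\mathfrak{g},N)$ as a (truncated) mapping cone of the cochain map $\partial^N$ and then to verify $(\delta_{\mathrm{NLie}})^2=0$ by a direct substitution that uses only three facts already on the table: (i) $(\delta_{\mathrm{CE}})^2=0$ for the Chevalley-Eilenberg differential of $(\mathfrak{g},[~,~]_\mathfrak{g})$ with adjoint coefficients (this follows, as noted in the text, from $\delta_{\mathrm{CE}}(f)=-[\mu,f]_\mathsf{NR}$ together with the graded Jacobi identity and $[\mu,\mu]_\mathsf{NR}=0$); (ii) $(d_N)^2=0$, which holds because $N$ is a Maurer-Cartan element of the Fr\"olicher-Nijenhuis algebra, so that $d_N=[N,-]_\mathsf{FN}$ squares to zero; and (iii) the intertwining identity $d_N\circ\partial^N=\partial^N\circ\delta_{\mathrm{CE}}$ recorded in (\ref{useful}).

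Concretely, I would first observe that the degree-one prescription $\delta_{\mathrm{NLie}}(f)=\big(\delta_{\mathrm{CE}}(f),-\partial^N(f)\big)$ is exactly the $n=1$ instance of the general formula $\delta_{\mathrm{NLie}}(\chi,F)=\big(\delta_{\mathrm{CE}}(\chi),d_N(F)+(-1)^n\partial^N(\chi)\big)$ with vanishing second component, so it suffices to check $(\delta_{\mathrm{NLie}})^2(\chi,F)=0$ for $(\chi,F)\in C^n_{\mathrm{NLie}}(\mathfrak{g},N)$ with $n\geq 1$ (the case $n=0$ being vacuous since $C^0_{\mathrm{NLie}}(\mathfrak{g},N)=0$). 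Applying $\delta_{\mathrm{NLie}}$ once lands us in $C^{n+1}_{\mathrm{NLie}}(\mathfrak{g},N)$, where the relevant sign becomes $(-1)^{n+1}$; applying it a second time gives
\[
(\delta_{\mathrm{NLie}})^2(\chi,F)=\big(\ \delta_{\mathrm{CE}}^2(\chi)\ ,\ d_N^2(F)+(-1)^n\, d_N\partial^N(\chi)+(-1)^{n+1}\,\partial^N\delta_{\mathrm{CE}}(\chi)\ \big).
\]
The first slot vanishes by (i); the term $d_N^2(F)$ vanishes by (ii); and the last two terms cancel because (iii) rewrites $(-1)^n d_N\partial^N(\chi)=(-1)^n\partial^N\delta_{\mathrm{CE}}(\chi)$, while $(-1)^n+(-1)^{n+1}=0$. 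Hence $(\delta_{\mathrm{NLie}})^2=0$.

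I do not expect a genuine obstacle: once (i)--(iii) are granted, this is the standard ``the mapping cone of a chain map is a complex'' computation, and the only point deserving attention is the sign bookkeeping --- specifically that the exponent appearing in the second slot of $\delta_{\mathrm{NLie}}$ is the degree in $C^\bullet_{\mathrm{NLie}}(\mathfrak{g},N)$ of the cochain being acted on (so it jumps from $(-1)^n$ to $(-1)^{n+1}$ between the two applications), which is precisely what forces the two cross terms to cancel. The only ingredient carrying real content is the intertwining relation (\ref{useful}), $d_N\circ\partial^N=\partial^N\circ\delta_{\mathrm{CE}}$; since it has already been asserted in the text (it is checked by expanding both sides as alternating sums of compositions with $N,N^2,\dots$ and repeatedly invoking the Nijenhuis identity), I would simply invoke it here as given.
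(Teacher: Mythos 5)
Your proof is correct and follows essentially the same route as the paper's: both reduce $(\delta_{\mathrm{NLie}})^2=0$ to the three facts $(\delta_{\mathrm{CE}})^2=0$, $(d_N)^2=0$, and the intertwining relation $d_N\circ\partial^N=\partial^N\circ\delta_{\mathrm{CE}}$ of (\ref{useful}), with the cross terms cancelling because the sign exponent increments from $(-1)^n$ to $(-1)^{n+1}$ between the two applications. The only cosmetic difference is that you absorb the degree-one case into the general formula (as the $n=1$ instance with vanishing second component), whereas the paper checks it separately; the sign bookkeeping is right in both.
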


\begin{proof}
    For any $f \in C^1_\mathrm{NLie} (\mathfrak{g}, N),$ we have
    \begin{align*}
        (\delta_\mathrm{NLie})^2 (f) =~& \delta_\mathrm{NLie} (\delta_\mathrm{CE} (f) , ~ \! - \partial^N (f) ) \\
        =~& \big(   (\delta_\mathrm{CE})^2 f ~ \! , ~ \! - (d_N \circ \partial^N) (f) + (\partial^N \circ \delta_\mathrm{CE}) (f)  \big) = 0 \quad  (\text{since }(\delta_\mathrm{CE})^2 = 0 \text{ and by using } (\ref{useful})).
    \end{align*}
    On the other hand, if $(\chi, F) \in C^{n \geq 2}_\mathrm{NLie} (\mathfrak{g}, N)$ then
    \begin{align*}
        (\delta_\mathrm{NLie})^2 (\chi, F) =~& \big(  \delta_\mathrm{CE} (\chi) ~ \! , ~ \! d_N (F) + (-1)^n ~ \! \partial^N (\chi)   \big) \\
        =~& \big(  (\delta_\mathrm{CE})^2 \chi ~\!, ~\! (d_N)^2 F + (-1)^n (d_N \circ \partial^N ) (\chi) + (-1)^{n+1} (\partial^N \circ \delta_\mathrm{CE}) (\chi)   \big) \\
        =~& 0 \quad  (\text{since } (\delta_\mathrm{CE})^2 = 0, (d_N)^2 = 0 \text{ and by using } (\ref{useful})).
    \end{align*}
    This completes the proof.
\end{proof}

It follows from Proposition \ref{d-nlie}  that $\{ \oplus_{n=0}^\infty C^n_\mathrm{NLie} (\mathfrak{g}, N), \delta_\mathrm{NLie} \}$ is a cochain complex. The corresponding cohomology groups are denoted by $H^\bullet_\mathrm{NLie} (\mathfrak{g}, N)$. They are said to be the {\bf cohomology groups} of the Nijenhuis Lie algebra $(\mathfrak{g}, [~, ~]_\mathfrak{g}, N)$ with coefficients in the adjoint Nijenhuis representation.

\medskip

In the following, we shall generalize the above construction to define the cohomology of a Nijenhuis Lie algebra with coefficients in a given Nijenhuis representation. Let $(\mathfrak{g}, [~, ~]_\mathfrak{g}, N)$ be a Nijenhuis Lie algebra and $(\mathcal{V}, \rho, S)$ be a Nijenhuis representation of it. At first, we consider the Chevalley-Eilenberg cochain complex $\{ \oplus_{n=0}^\infty \mathrm{Hom} (\wedge^n \mathfrak{g}, \mathcal{V}), \delta_\mathrm{CE} \}$ of the Lie algebra $(\mathfrak{g}, [~, ~]_\mathfrak{g})$ with coefficients in the representation $(\mathcal{V}, \rho)$, where the map $\delta_\mathrm{CE} : \mathrm{Hom} (\wedge^n \mathfrak{g}, \mathcal{V}) \rightarrow \mathrm{Hom} (\wedge^{n+1} \mathfrak{g}, \mathcal{V})$ is given in (\ref{ce-diff}). On the other hand, for each $n \geq 0$, we define another map $d_{N, S } : \mathrm{Hom} (\wedge^n \mathfrak{g}, \mathcal{V}) \rightarrow \mathrm{Hom} (\wedge^{n+1} \mathfrak{g}, \mathcal{V})$ by
\begin{align*}
   & \qquad \qquad \qquad  (d_{N, S} v)(x) = \rho_{N (x)} v - S (\rho_x v),\\
    &(d_{N, S} f) (x_1, \ldots, x_{n+1}) = \sum_{i=1}^{n+1} (-1)^{i+1} \rho_{N (x_i)} f (x_1, \ldots, \widehat{x_i}, \ldots, x_{n+1}) \\
    &+ \sum_{1 \leq i < j \leq n+1} (-1)^{i+j} f (   [N(x_i), x_j]_\mathfrak{g} + [x_i, N (x_j)]_\mathfrak{g} - N [x_i, x_j]_\mathfrak{g}, x_1, \ldots, \widehat{x_i}, \ldots, \widehat{x_j}, \ldots, x_{n+1}) \nonumber \\
    &- S \big(   \sum_{i=1}^{n+1} (-1)^{i+1}  \rho_{x_i} f (x_1, \ldots, \widehat{x_i}, \ldots, x_{n+1})
    + \sum_{1 \leq i < j \leq n+1} (-1)^{i+j} f ([x_i, x_j]_\mathfrak{g}, x_1, \ldots, \widehat{x_i}, \ldots, \widehat{x_j}, \ldots, x_{n+1})   \big), \nonumber 
\end{align*}
for $v \in \mathcal{V}$, $f \in \mathrm{Hom} (\wedge^n \mathfrak{g}, \mathcal{V})$ and $x, x_1, \ldots, x_{n+1} \in \mathfrak{g}$. Then we have the following result.
\begin{prop}
    The map $d_{N, S}$ is a differential, i.e. $(d_{N, S})^2 = 0$.
\end{prop}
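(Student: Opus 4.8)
The plan is to realize $d_{N,S}$ as the restriction of a Nijenhuis-operator coboundary on an auxiliary semidirect product, exactly mirroring the way the relative Rota--Baxter complex was identified with a subcomplex of $d_{\widetilde r}$ earlier in this section. By Proposition~\ref{prop-semid-nlie}, the map $N \oplus S$ is a Nijenhuis operator on the semidirect product Lie algebra $(\mathfrak{g} \oplus \mathcal{V}, [~,~]_\ltimes)$; hence, by the results of Subsection~\ref{subsec-31} (concretely, because $[N \oplus S, N \oplus S]_\mathsf{FN} = 0$), the associated coboundary $d_{N \oplus S}$ on $\oplus_{n \geq 0} \mathrm{Hom}(\wedge^n (\mathfrak{g} \oplus \mathcal{V}), \mathfrak{g} \oplus \mathcal{V})$ squares to zero.

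Next I would embed $\mathrm{Hom}(\wedge^n \mathfrak{g}, \mathcal{V})$ into $\mathrm{Hom}(\wedge^n (\mathfrak{g} \oplus \mathcal{V}), \mathfrak{g} \oplus \mathcal{V})$ via $f \mapsto \widehat f$, where $\widehat f((x_1, v_1), \ldots, (x_n, v_n)) := (0, f(x_1, \ldots, x_n))$ (for $n = 0$ this is $\mathcal{V} \ni v \mapsto (0, v)$), and show that $d_{N \oplus S}$ carries the image of this embedding in degree $n$ into its image in degree $n+1$. The point is that $\widehat f$ depends only on the $\mathfrak{g}$-components of its arguments and takes values in $\{0\} \oplus \mathcal{V}$; since the $\mathfrak{g}$-component of any bracket $[(a, u), (b, w)]_\ltimes$ and of its $N \oplus S$-twist depends only on $a$ and $b$, while $[(a, u), (0, v)]_\ltimes = (0, \rho_a v)$ and $(N \oplus S)(0, v) = (0, S(v))$, a term-by-term inspection of formula~(\ref{dn-exp}) --- and of (\ref{dn0}) in degree $0$ --- shows $d_{N \oplus S} \widehat f$ again depends only on the $\mathfrak{g}$-components and takes values in $\{0\} \oplus \mathcal{V}$.

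The same inspection identifies the restriction of $d_{N \oplus S}$ with $d_{N,S}$ on the nose: the first block of terms in (\ref{dn-exp}) produces $\sum_i (-1)^{i+1} \rho_{N(x_i)} f(\ldots, \widehat{x_i}, \ldots)$, since the $\mathcal{V}$-component of $[(N(x_i), S(v_i)), (0, f(\ldots))]_\ltimes$ is $\rho_{N(x_i)} f(\ldots)$; the second block produces $\sum_{i<j} (-1)^{i+j} f([N(x_i), x_j]_\mathfrak{g} + [x_i, N(x_j)]_\mathfrak{g} - N[x_i, x_j]_\mathfrak{g}, \ldots)$, because the $\mathfrak{g}$-component of the relevant twisted bracket in $\mathfrak{g} \oplus \mathcal{V}$ is precisely $[x_i, x_j]_\mathfrak{g}^N$; and applying $N \oplus S$ to the third block yields exactly the $S(\cdots)$ summand of $d_{N,S}$. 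The degree-$0$ check is analogous and recovers $(d_{N,S} v)(x) = \rho_{N(x)} v - S(\rho_x v)$. Consequently $\{\oplus_n \mathrm{Hom}(\wedge^n \mathfrak{g}, \mathcal{V}), d_{N,S}\}$ is a subcomplex of $\{\oplus_n \mathrm{Hom}(\wedge^n (\mathfrak{g} \oplus \mathcal{V}), \mathfrak{g} \oplus \mathcal{V}), d_{N \oplus S}\}$, and $(d_{N,S})^2 = 0$ follows immediately from $(d_{N \oplus S})^2 = 0$.

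I do not expect a genuine obstacle here: the entire content is the bookkeeping of the previous two paragraphs --- verifying that the subspace $\mathrm{Hom}(\wedge^\bullet \mathfrak{g}, \mathcal{V})$ is preserved and that the $S$-twisted summand of $d_{N,S}$ is reproduced by $N \oplus S$ acting on the last group of terms of (\ref{dn-exp}). The only alternative would be to expand $(d_{N,S})^2$ directly and cancel terms using the Nijenhuis identity for $N$, the $\mathfrak{g}$-representation axioms for $\rho$, and the Nijenhuis-representation compatibility between $\rho$, $N$ and $S$; the semidirect-product argument simply packages all of these cancellations into the single identity $[N \oplus S, N \oplus S]_\mathsf{FN} = 0$.
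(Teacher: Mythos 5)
Your proposal is correct and follows exactly the paper's own argument: the paper likewise invokes Proposition \ref{prop-semid-nlie} to get the Nijenhuis operator $N \oplus S$ on $(\mathfrak{g} \oplus \mathcal{V}, [~,~]_\ltimes)$, observes that $d_{N \oplus S}$ preserves the subspace $\mathrm{Hom}(\wedge^\bullet \mathfrak{g}, \mathcal{V})$ and restricts there to $d_{N,S}$, and concludes from $(d_{N \oplus S})^2 = 0$. Your term-by-term verification of the restriction is in fact more detailed than what the paper records.
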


\begin{proof}
Since $(\mathfrak{g}, [~, ~]_\mathfrak{g}, N)$ is a Nijenhuis Lie algebra and $(\mathcal{V}, \rho, S)$ is a Nijenhuis representation, it follows from Proposition \ref{prop-semid-nlie} that the triple $(\mathfrak{g} \oplus \mathcal{V}, [~, ~]_\ltimes, N \oplus S)$ is a Nijenhuis Lie algebra. In other words, the map $N \oplus S : \mathfrak{g} \oplus \mathcal{V} \rightarrow \mathfrak{g} \oplus \mathcal{V}$ is a Nijenhuis operator on the semidirect product Lie algebra $(\mathfrak{g} \oplus \mathcal{V}, [~, ~]_\ltimes)$. Hence one may consider the cochain complex $\{ \oplus_{n=0}^\infty \mathrm{Hom} ( \wedge^n (\mathfrak{g} \oplus \mathcal{V}), \mathfrak{g} \oplus \mathcal{V}) , d_{N \oplus S} \}$ associated to the Nijenhuis operator $N \oplus S$ on the semidirect product Lie algebra. Then it is easy to verify that the differential map $d_{N \oplus S}$ satisfies
\begin{align*}
    d_{N \oplus S} \big(  \mathrm{Hom} (\wedge^{n} \mathfrak{g}, \mathcal{V})   \big) \subset \mathrm{Hom} (\wedge^{n+1} \mathfrak{g}, \mathcal{V}), \text{ for all } n.
\end{align*}
Moreover, while restricting to the subspace $ \mathrm{Hom} (\wedge^{n} \mathfrak{g}, \mathcal{V})$, the map $d_{N \oplus S}$ coincides with the map $d_{N, S}$. As $(d_{N \oplus S})^2 = 0$, it follows that $(d_{N, S})^2 = 0$.
\end{proof}

It follows from the above proposition that $\{ \oplus_{n=0}^\infty \mathrm{Hom} (\wedge^n \mathfrak{g}, \mathcal{V}), d_{N, S} \}$ is a cochain complex. This can be regarded as the cochain complex of the Nijenhuis operator $N$ relative to the operator $S$. The corresponding cohomology groups are denoted by $H^\bullet (N; S)$.

\begin{remark}
    When $(\mathcal{V}, \rho, S) = (\mathfrak{g}, \mathrm{ad}_\mathfrak{g}, N)$ is the adjoint Nijenhuis representation, the above cochain complex $\{ \oplus_{n=0}^\infty \mathrm{Hom} (\wedge^n \mathfrak{g}, \mathcal{V}), d_{N, S} \}$ coincides with the cochain complex $\{ \oplus_{n=0}^\infty \mathrm{Hom} (\wedge^n \mathfrak{g}, \mathfrak{g}), d_N \}$ associated to the Nijenhuis operator $N$ (see Subsection \ref{subsec-31}).
\end{remark}

\begin{remark}
    Let $(\mathfrak{g}, [~, ~]_\mathfrak{g}, N)$ be a Nijenhuis Lie algebra and $(\mathcal{V}, \rho, S)$ be any Nijenhuis representation. Then we have seen in Example \ref{power-repn} that the deformed Lie algebra $\mathfrak{g}^N = (\mathfrak{g}, [~, ~]_\mathfrak{g}^N)$ has a representation on the vector space $\mathcal{V}$ with the action map $\rho^1 : \mathfrak{g}^N \rightarrow \mathrm{End} (\mathcal{V})$ given by
    \begin{align*}
        \rho_x^1  (v) := \rho_{N (x)} v + \rho_x S (v) - S (\rho_x v), \text{ for } x \in \mathfrak{g}^N, v \in \mathcal{V}.
    \end{align*}
    We denote this representation $(\mathcal{V}, \rho^1)$  simply by $\mathcal{V}^S$. Next, for each $n \geq 0$, we define a linear map 
    \begin{align*}
    \Phi_n : \mathrm{Hom} (\wedge^n \mathfrak{g}, \mathcal{V}) \rightarrow \mathrm{Hom} (\wedge^{n+1} \mathfrak{g}, \mathcal{V}) ~~~ \text{ by } ~~~ \Phi_n (f) := (-1)^{n+1} ~ \! \delta_\mathrm{CE} (f), \text{ for } f \in \mathrm{Hom} (\wedge^n \mathfrak{g}, \mathcal{V}).
    \end{align*}
    Then similar to Proposition \ref{prop-mor}, one can show that $\delta_\mathrm{CE}^{N, S} \circ \Phi_n = \Phi_{n+1} \circ d_{N, S}$, where $\delta_\mathrm{CE}^{N, S}$ is the Chevalley-Eilenberg coboundary operator of the deformed Lie algebra $\mathfrak{g}^N$ with coefficients in the representation $\mathcal{V}^S$. Hence there is a morphism $H^\bullet (N ; S) \rightarrow H^{\bullet +1}_\mathrm{CE} (\mathfrak{g}^N ; \mathcal{V}^S)$ at the level of cohomology groups.
\end{remark}

For each $n \geq 0$, we now define a map $\partial^{N,S} : \mathrm{Hom} (\wedge^n \mathfrak{g}, \mathcal{V}) \rightarrow  \mathrm{Hom} (\wedge^n \mathfrak{g}, \mathcal{V})$ by
\begin{align*}
     & \qquad \qquad \partial^{N, S} (v) = v, \text{ for } v \in \mathcal{V},\\
    (\partial^{N, S} f) &(x_1, \ldots, x_n) = f (N (x_1), \ldots, N (x_n )) - \sum_{i=1}^n S \big(   f ( N (x_1), \ldots, x_i, \ldots, N (x_n)) \big) \\
   & + \sum_{1 \leq i < j \leq n} S^2 \big(    f (  N (x_1), \ldots, x_i, \ldots, x_j, \ldots, N (x_n)    )  \big) - \cdots + (-1)^n S^n (f (x_1, \ldots, x_n)),
\end{align*}
for $f \in \mathrm{Hom} (\wedge^n \mathfrak{g}, \mathcal{V})$ and $x_1, \ldots, x_n \in \mathfrak{g}$. Then it turns out that $(d_{N, S} \circ \partial^{N, S}) (f) = (\partial^{N, S} \circ \delta_\mathrm{CE}) (f)$, for $f \in \mathrm{Hom} (\wedge^n \mathfrak{g}, \mathcal{V})$. As a result, we obtain the cochain complex $\{  \oplus_{n=0}^\infty C^n_\mathrm{NLie} ((\mathfrak{g}, N); (\mathcal{V}, S)), \delta_\mathrm{NLie}  \}$, where
\begin{align*}
    C^n_\mathrm{NLie} ((\mathfrak{g}, N); (\mathcal{V}, S)) = \begin{cases}
        0 & \text{ if } n =0,\\
        \mathrm{Hom} (\mathfrak{g}, \mathcal{V}) & \text{ if } n =1, \\
        \mathrm{Hom}(\wedge^n \mathfrak{g}, \mathcal{V}) \oplus \mathrm{Hom}(\wedge^{n-1} \mathfrak{g}, \mathcal{V}) & \text{ if } n \geq 2 \\
    \end{cases}
\end{align*}
and the map $\delta_\mathrm{NLie} : C^n_\mathrm{NLie} ((\mathfrak{g}, N); (\mathcal{V}, S)) \rightarrow C^{n+1}_\mathrm{NLie} ((\mathfrak{g}, N); (\mathcal{V}, S))$ is given by
\begin{align*}
\delta_\mathrm{NLie} (f) = (\delta_\mathrm{CE} (f) , ~ \! - \partial^{N, S} (f)) ~~~~ \text{ and } ~~~~ \delta_\mathrm{NLie} (\chi, F) = ( \delta_\mathrm{CE} (\chi), ~ \! d_{N, S} (F) + (-1)^n ~\! \partial^{N, S } (\chi)),
\end{align*}
for $f \in C^1_\mathrm{NLie} ((\mathfrak{g}, N); (\mathcal{V}, S))$ and $(\chi, F) \in C^{n \geq 2}_\mathrm{NLie} ((\mathfrak{g}, N); (\mathcal{V}, S)) $. Then the cohomology groups of the cochain complex $\{  \oplus_{n=0}^\infty   C^n_\mathrm{NLie} ((\mathfrak{g}, N); (\mathcal{V}, S)), \delta_\mathrm{NLie} \}$ are simply denoted by $H^\bullet_\mathrm{NLie}  ((\mathfrak{g}, N); (\mathcal{V}, S))$ which are called the {\bf cohomology groups} of the Nijenhuis Lie algebra $(\mathfrak{g}, [~, ~]_\mathfrak{g}, N)$ with coefficients in the Nijenhuis representation $(\mathcal{V}, \rho, S)$.

\medskip

It follows from the above definition that a pair $(\chi, F) \in C^2_\mathrm{NLie} ((\mathfrak{g}, N); (\mathcal{V}, S)) = \mathrm{Hom} (\wedge^2 \mathfrak{g}, \mathcal{V}) \oplus \mathrm{Hom} (\mathfrak{g}, \mathcal{V})$ is a $2$-cocycle if and only if $\delta_\mathrm{CE} (\chi) = 0$ and $d_{N, S} (F) + ~\!  \partial^{N, S } (\chi) = 0$. These two conditions can be explicitly written as 
\begin{align*}
    &\rho_x \chi (y, z) + \rho_y \chi (z, x) + \rho_z \chi (x, y) - \chi ([x, y]_\mathfrak{g}, z) - \chi ([y, z]_\mathfrak{g}, x) - \chi ([z, x]_\mathfrak{g}, y) = 0, \\\\ 
    \rho_{N (x)} F (y) ~\!  - ~\! & \rho_{N (y)} F (x) - F (   [N (x), y]_\mathfrak{g} + [x, N (y)]_\mathfrak{g} -N [x, y]_\mathfrak{g} ) - S (\rho_x F (y) - \rho_y F (x) - F [x, y]_\mathfrak{g}) \\
    &+ \chi (N (x), N (y)) - S \big( \chi (N (x), y) + \chi (x, N (y)) - S \chi (x, y) \big) = 0,
\end{align*}
for all $x, y , z \in \mathfrak{g}$. Further, a $2$-cocycle $(\chi, F)$ is a $2$-coboundary if there exists a linear map $\varphi \in \mathrm{Hom} (\mathfrak{g}, \mathcal{V})$ such that
\begin{align*}
    \chi (x, y) = \rho_x \varphi (y) - \rho_y \varphi (x) - \varphi ([x, y]_\mathfrak{g}) ~~~~ \text{ and } ~~~~ F (x) = ( S \circ \varphi - \varphi \circ N) (x), \text{ for all } x, y \in \mathfrak{g}.
\end{align*}

\medskip

In the following result, we connect the cohomology of a Nijenhuis Lie algebra $(\mathfrak{g}, [~, ~]_\mathfrak{g}, N)$ and the Chevalley-Eilenberg cohomology of the underlying Lie algebra $(\mathfrak{g}, [~, ~]_\mathfrak{g})$ and also the cohomology of the Nijenhuis operator $N$.

\begin{thm}
    Let $(\mathfrak{g}, [~, ~]_\mathfrak{g}, N)$ be a Nijenhuis Lie algebra and $(\mathcal{V}, \rho, S)$ be a Nijenhuis representation of it. Then there is a short exact sequence of cochain complexes
    \begin{align*}
        0 \rightarrow \{ \oplus_{n=2}^\infty \mathrm{Hom} (\wedge^{n -1} \mathfrak{g}, \mathcal{V}), d_{N, S} \} &\xrightarrow{i} \{ \oplus_{n=2}^\infty C^n_\mathrm{NLie} ((\mathfrak{g}, N); (\mathcal{V}, S)), \delta_{\mathrm{NLie}} \} \xrightarrow{p} \{ \oplus_{n=2}^\infty \mathrm{Hom} ( \wedge^{n} \mathfrak{g}, \mathcal{V}), \delta_\mathrm{CE} \} \rightarrow 0, \\
      &  \text{ where } i (F) = (0, F) \text{ and } p (\chi, F) = \chi
    \end{align*}
    which yields a long exact sequence in the cohomology:
    \begin{align*}
        H^2 (N; S) \rightarrow H^3_\mathrm{NLie} ((\mathfrak{g}, N); (\mathcal{V}, S)) \rightarrow H^3_\mathrm{CE} (\mathfrak{g}; \mathcal{V}) \rightarrow H^3 (N ; S) \rightarrow \cdots .
    \end{align*}
    In particular, when $(\mathcal{V}, \rho, S) = (\mathfrak{g}, \mathrm{ad}_\mathfrak{g}, N)$ is the adjoint Nijenhuis representation, we obtain the following long exact sequence connecting various cohomology groups:
    \begin{align*}
        H^2 (N) \rightarrow H^3_\mathrm{NLie} (\mathfrak{g}, N) \rightarrow H^3_\mathrm{CE} (\mathfrak{g}) \rightarrow H^3 (N) \rightarrow \cdots .
    \end{align*}
\end{thm}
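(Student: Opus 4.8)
The plan is to read $\{\oplus_{n=2}^\infty C^n_\mathrm{NLie}((\mathfrak{g},N);(\mathcal{V},S)),\delta_\mathrm{NLie}\}$ as (a truncation of) the mapping cone of the cochain map $\partial^{N,S}$ constructed just before the statement, and to exploit the tautological short exact sequence that every mapping cone sits in. Concretely, I would first verify that $i$ and $p$ are morphisms of cochain complexes. For $i(F)=(0,F)$, the second defining formula for $\delta_\mathrm{NLie}$ gives $\delta_\mathrm{NLie}(0,F)=(\delta_\mathrm{CE}(0),\,d_{N,S}(F)+(-1)^n\partial^{N,S}(0))=(0,d_{N,S}(F))$, which is exactly $i$ applied to the image of $F$ under the differential of the left-hand complex, once one remembers that the degree-$n$ piece of that complex is $\mathrm{Hom}(\wedge^{n-1}\mathfrak{g},\mathcal{V})$ with differential $d_{N,S}\colon\mathrm{Hom}(\wedge^{n-1}\mathfrak{g},\mathcal{V})\to\mathrm{Hom}(\wedge^{n}\mathfrak{g},\mathcal{V})$. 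For $p(\chi,F)=\chi$ one simply reads off $p(\delta_\mathrm{NLie}(\chi,F))=\delta_\mathrm{CE}(\chi)=\delta_\mathrm{CE}(p(\chi,F))$. Injectivity of $i$, surjectivity of $p$, and $\ker p=\{(\chi,F):\chi=0\}=\mathrm{im}\,i$ are immediate from the componentwise descriptions, so the displayed sequence is a short exact sequence of cochain complexes; implicitly this rests on the identity $d_{N,S}\circ\partial^{N,S}=\partial^{N,S}\circ\delta_\mathrm{CE}$ recorded above, which is precisely what makes $\delta_\mathrm{NLie}$ square to zero in the first place.

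Next I would apply the zig-zag (snake) lemma to this short exact sequence to obtain a long exact cohomology sequence $\cdots\to H^n(\text{left})\to H^n(\text{middle})\to H^n(\text{right})\xrightarrow{\ \partial\ }H^{n+1}(\text{left})\to\cdots$, and then identify the three families of cohomology groups. Since the degree-$n$ piece of the left complex is $\mathrm{Hom}(\wedge^{n-1}\mathfrak{g},\mathcal{V})$ with differential $d_{N,S}$, its $n$-th cohomology is $H^{n-1}(N;S)$ for $n\ge 3$; the middle complex is the truncation in degrees $\ge 2$ of the cochain complex of the Nijenhuis Lie algebra with coefficients in $(\mathcal{V},\rho,S)$, hence has $n$-th cohomology $H^n_\mathrm{NLie}((\mathfrak{g},N);(\mathcal{V},S))$ for $n\ge 3$; and the right complex is the Chevalley--Eilenberg complex of $\mathfrak{g}$ with coefficients in $(\mathcal{V},\rho)$ truncated below degree $2$, hence has $n$-th cohomology $H^n_\mathrm{CE}(\mathfrak{g};\mathcal{V})$ for $n\ge 3$. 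Substituting these identifications into the zig-zag sequence, whose displayed segment begins at $H^3$ of each truncated complex, namely at $H^2(N;S)\to H^3_\mathrm{NLie}((\mathfrak{g},N);(\mathcal{V},S))\to H^3_\mathrm{CE}(\mathfrak{g};\mathcal{V})\to H^3(N;S)\to\cdots$, gives the asserted long exact sequence. The particular case $(\mathcal{V},\rho,S)=(\mathfrak{g},\mathrm{ad}_\mathfrak{g},N)$ then follows from the remark that $d_{N,S}$ reduces to $d_N$ in this situation, so that $H^\bullet(N;S)=H^\bullet(N)$, $H^\bullet_\mathrm{NLie}((\mathfrak{g},N);(\mathfrak{g},N))=H^\bullet_\mathrm{NLie}(\mathfrak{g},N)$ and $H^\bullet_\mathrm{CE}(\mathfrak{g};\mathfrak{g})=H^\bullet_\mathrm{CE}(\mathfrak{g})$.

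All of the computations involved are routine; the only point that needs genuine attention, the \emph{main obstacle} such as it is, is the combined bookkeeping of the degree shift built into the mapping cone (so that $H^n$ of the left-hand complex is $H^{n-1}(N;S)$ rather than $H^n(N;S)$) together with the effect of truncating all three complexes below degree $2$, since these two features are exactly what determine where the long exact sequence starts and pin down the indices appearing in the statement.
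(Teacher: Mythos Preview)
Your argument is correct. The paper actually states this theorem without proof, since the result is immediate from the way $\{C^\bullet_\mathrm{NLie},\delta_\mathrm{NLie}\}$ was defined as a mapping cone; your write-up supplies precisely the standard verification (chain-map checks for $i$ and $p$, componentwise exactness, then the snake lemma) together with the correct degree-shift bookkeeping.
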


\begin{remark}\label{mc-char}
    In this section, we mainly developed the cohomology of a Nijenhuis Lie algebra. For this, we essentially require the Chevalley-Eilenberg complex of the underlying Lie algebra and the cochain complex of the Nijenhuis operator. Although, both the above complexes carry graded Lie algebra structures, it is not clear how to obtain a higher structure (possibly an $L_\infty$-algebra) on the cochain complex of a Nijenhuis Lie algebra. In future work, we aim to find the Maurer-Cartan characterization of a Nijenhuis Lie algebra and obtain the higher structure on the cochain complex.
\end{remark}

\medskip

\medskip

\section{Deformations of Nijenhuis Lie algebras}\label{sec4}
In this section, we study formal and infinitesimal deformations of a Nijenhuis Lie algebra $(\mathfrak{g}, [~,~]_\mathfrak{g}, N)$ in terms of its cohomology. Among others, we show that the set of all equivalence classes of infinitesimal deformations of the Nijenhuis Lie algebra $(\mathfrak{g}, [~,~]_\mathfrak{g}, N)$ has a bijection with its second cohomology group.

\medskip

Let $\mathsf{R}$ be a commutative unital ring with unity $1_\mathsf{R}$. An {\em augmentation} of $\mathsf{R}$ is a homomorphism $\varepsilon : \mathsf{R} \rightarrow {\bf k}$ satisfying $\varepsilon (1_\mathsf{R}) = 1_{\bf k}$. In the following, we shall always assume that $\mathsf{R}$ is a commutative unital ring with an augmentation $\varepsilon$. A Nijenhuis Lie algebra in the category of $\mathsf{R}$-modules can be defined as of Definition \ref{defn-nla} by replacing the vector space $\mathfrak{g}$ by an $\mathsf{R}$-module and all the (bi)linear operations on $\mathfrak{g}$ by $\mathsf{R}$-(bi)linear operations on the $\mathsf{R}$-module. Morphisms and isomorphisms between Nijenhuis Lie algebras in the category of $\mathsf{R}$-modules can be defined similarly. Note that any Nijenhuis Lie algebra $(\mathfrak{g}, [~, ~]_\mathfrak{g}, N)$ can be regarded as a Nijenhuis Lie algebra in the category of $\mathsf{R}$-modules, where the $\mathsf{R}$-module structure on $\mathfrak{g}$ is given by $r \cdot x = \varepsilon (r) x$, for $r \in \mathsf{R}$ and $x \in \mathfrak{g}$.

\begin{defn}
An {\bf {\sf R}-deformation} of a Nijenhuis Lie algebra $(\mathfrak{g}, [~,~]_\mathfrak{g}, N)$ consists of a pair $(\mu_\mathsf{R}, N_\mathsf{R})$ of an antisymmetric {\sf R}-bilinear map $\mu_\mathsf{R} : (\mathsf{R} \otimes_{\bf k} \mathfrak{g}) \times (\mathsf{R} \otimes_{\bf k} \mathfrak{g}) \rightarrow \mathsf{R} \otimes_{\bf k} \mathfrak{g}$ and a {\sf R}-linear map $N_\mathsf{R} : \mathsf{R} \otimes_{\bf k} \mathfrak{g} \rightarrow \mathsf{R} \otimes_{\bf k} \mathfrak{g}$ such that the following conditions hold:
\begin{itemize}
\item $( \mathsf{R} \otimes_{\bf k} \mathfrak{g}, \mu_\mathsf{R}, N_\mathsf{R})$ is a Nijenhuis Lie algebra in the category of $\mathsf{R}$-modules,
\item the map $\varepsilon \otimes_\mathbf{k} \mathrm{Id}_\mathfrak{g}: \mathsf{R} \otimes_{\bf k} \mathfrak{g} \rightarrow \mathfrak{g}$ is a morphism of Nijenhuis Lie algebras in the category of $\mathsf{R}$-modules from $( \mathsf{R} \otimes_{\bf k} \mathfrak{g}, \mu_\mathsf{R}, N_\mathsf{R})$ to $(\mathfrak{g}, [~,~]_\mathfrak{g}, N)$.
\end{itemize}
\end{defn}

\begin{defn}
Let $(\mathfrak{g}, [~,~]_\mathfrak{g}, N)$ be a Nijenhuis Lie algebra. Two ${\sf R}$-deformations $(\mu_\mathsf{R}, N_\mathsf{R})$ and $(\mu'_\mathsf{R}, N'_\mathsf{R})$ are said to be {\bf equivalent} if there exists an ${\sf R}$-linear isomorphism $\varphi: \mathsf{R} \otimes_{\bf k} \mathfrak{g} \rightarrow \mathsf{R} \otimes_{\bf k} \mathfrak{g}$ which is a morphism of Nijenhuis Lie algebras in the category of ${\sf R}$-modules from $( \mathsf{R} \otimes_{\bf k} \mathfrak{g}, \mu_\mathsf{R}, N_\mathsf{R})$ to 
$( \mathsf{R} \otimes_{\bf k} \mathfrak{g}, \mu'_\mathsf{R}, N'_\mathsf{R})$,
 satisfying additionally 
 $( \varepsilon \otimes_{\bf k} \mathrm{Id}_\mathfrak{g}) \circ \varphi = (\varepsilon \otimes_{\bf k} \mathrm{Id}_\mathfrak{g})$.
\end{defn}

In the following, we shall consider the cases when ${\sf R} = {\bf k} [[t]]$ (the ring of formal power series) and ${\sf R} = {\bf k}[[t]]/ (t^2)$ (the local Artinian ring of dual numbers) with the obvious augmentations. The corresponding {\sf R}-deformations are respectively called formal deformations and infinitesimal deformations. We now briefly discuss the formal deformations.

\begin{defn}
(i) Let $(\mathfrak{g}, [~, ~]_\mathfrak{g}, N)$ be a Nijenhuis Lie algebra. A {\em formal deformation} of this Nijenhuis Lie algebra is a pair $(\mu_t, N_t)$ of formal sums $\mu_t = \sum_{i=0}^\infty t^i \mu_i$ and $N_t = \sum_{i=0}^\infty t^i N_i$ (where each $\mu_i : \mathfrak{g} \times \mathfrak{g} \rightarrow \mathfrak{g}$ are bilinear antisymmetric maps and $N_i : \mathfrak{g} \rightarrow \mathfrak{g}$ are linear maps with $\mu_0 = [~,~]_\mathfrak{g}$ and $N_0 = N$) that makes the triple $(\mathfrak{g}[[t]], \mu_t, N_t)$ into a Nijenhuis Lie algebra in the category of ${\bf k} [[t]]$-modules.

\medskip

(ii) Two formal deformations $(\mu_t = \sum_{i=0}^\infty t^i \mu_i, N_t = \sum_{i=0}^\infty t^i N_i)$ and  $(\mu'_t = \sum_{i=0}^\infty t^i \mu'_i, N'_t = \sum_{i=0}^\infty t^i N'_i)$ are {\em equivalent} if there exists a ${\bf k}[[t]]$-linear map $\varphi
_t : \mathfrak{g}[[t]] \rightarrow \mathfrak{g} [[t]]$ of the form $\varphi_t = \sum_{i=0}^\infty t^i \varphi_i$ (where each $\varphi_i:\mathfrak{g} \rightarrow \mathfrak{g}$ are linear maps with $\varphi_0 = \mathrm{Id}_\mathfrak{g}$) that defines an isomorphism of Nijenhuis Lie algebras in the category of ${\bf k}[[t]]$-modules from $(\mathfrak{g}[[t]], \mu_t, N_t)$ to $(\mathfrak{g}[[t]], \mu'_t, N'_t)$.
\end{defn} 

It follows from the above definition that a pair $(\mu_t = \sum_{i=0}^\infty t^i \mu_i, N_t = \sum_{i=0}^\infty t^i N_i)$ is a formal deformation of the Nijenhuis Lie algebra $(\mathfrak{g}, [~, ~ ]_\mathfrak{g}, N)$ if and only if the following set of identities are hold: For each $p \geq 0$ and $x, y, z \in \mathfrak{g}$,
\begin{align*}
&\sum_{i+j = p} \big\{ \mu_i ( \mu_j (x, y), z) +   \mu_i ( \mu_j ( y, z), x) + \mu_i ( \mu_j (z, x), y) \big\} = 0, \\
\sum_{i+j+k = p} & \mu_i ( N_j (x) ,N_k (y)) = \sum_{i+j + k = p} N_i \big( \mu_j ( N_k (x), y) + \mu_j (x, N_k (y)) - N_k (\mu_j (x, y))    \big).
\end{align*}
Both the above identities are automatically hold for $p= 0$ (as $\mu_0 = [~,~]_\mathfrak{g}$ and $N_0 =N$). However, for $p= 1$, we get that
\begin{align}
[\mu_1 (x, y), z]_\mathfrak{g} + [\mu_1 (y, z), x]_\mathfrak{g} + [\mu_1 (z, x), y]_\mathfrak{g} + \mu_1  ([x, y]_\mathfrak{g}, z) + \mu_1 ([y, z]_\mathfrak{g}, x) + \mu_1 ([z, x]_\mathfrak{g}, y) = 0, \label{def1}
\end{align}
\begin{align}
\mu_1 (N (x), N (y)) + [N_1 (x), N (y)]_\mathfrak{g} + [N (x), N_1 (y)]_\mathfrak{g} = N_1 ( [N(x), y]_\mathfrak{g} + [x, N (y)]_\mathfrak{g} - N [x, y]_\mathfrak{g}) \label{def2} \\
+ N \big(  \mu_1 (N (x) ,y) + \mu_1 (x, N (y)) - N \mu_1 (x, y) \big) + N ( [N_1(x), y]_\mathfrak{g} + [x, N_1 (y)]_\mathfrak{g} - N_1 [x, y]_\mathfrak{g}  ), \nonumber
\end{align}
for all $x, y, z \in \mathfrak{g}$. The identity (\ref{def1}) simply means that $(\delta_\mathrm{CE} \mu_1) (x, y, z) = 0$ while the identity (\ref{def2}) is equivalent to $( d_N (N_1) + \partial^N (\mu_1)) (x, y) = 0$. As a result, we obtain that
\begin{align*}
\delta_\mathrm{NLie} (\mu_1, N_1) = ( \delta_\mathrm{CE} \mu_1 ~ \!, ~ \! d_N (N_1) + \partial^N (\mu_1) ) = 0.
\end{align*}
This shows that $(\mu_1, N_1)$ is a $2$-cocycle of the Nijenhuis Lie algebra $(\mathfrak{g}, [~,~]_\mathfrak{g}, N)$ with coefficients in the adjoint Nijenhuis representation. In general, if $(\mu_1, N_1) = \cdots = (\mu_l, N_l) = 0$ then $(\mu_{l+1}, N_{l+1})$ is a $2$-cocycle.

Two formal deformations $(\mu_t = \sum_{i=0}^\infty t^i \mu_i, N_t = \sum_{i=0}^\infty t^i N_i)$ and  $(\mu'_t = \sum_{i=0}^\infty t^i \mu'_i, N'_t = \sum_{i=0}^\infty t^i N'_i)$ are equivalent if and only if 
\begin{align*}
\sum_{i+j = p} \varphi_i (\mu_j (x, y)) = \sum_{i+j + k = p} \mu_i' ( \varphi_j (x) , \varphi_k (y)) ~~~~ \text{ and } ~~~~ \sum_{i+ j = p } N_i' \circ \varphi_j = \sum_{i+j = p } \varphi_i \circ N_j,
\end{align*}
for any $p \geq 0$ and $x, y \in \mathfrak{g}$. As before, both the above identities are held automatically as $\mu_0 = \mu_0' = [~,~]_\mathfrak{g}$, $N_0 = N_0' = N$ and $\varphi_0 = \mathrm{Id}_\mathfrak{g}$. However, for $p =1$, we obtain
\begin{align*}
 \mu_1 (x, y) - \mu_1' (x, y) =~& [x, \varphi_1 (y)]_\mathfrak{g} - \varphi_1 [x, y]_\mathfrak{g} + [\varphi_1 (x), y]_\mathfrak{g} = (\delta_\mathrm{CE} \varphi_1) (x, y), \\
 N_1 - N_1' =~& N \circ \varphi_1 - \varphi_1 \circ N.
\end{align*}
for $x, y \in \mathfrak{g}$. These two identities can be simply expressed as
\begin{align*}
(\mu_1, N_1) - (\mu_1' , N_1') = (\delta_\mathrm{CE} (\varphi_1) , - \partial^N (\varphi_1) ) = \delta_\mathrm{NLie} (\varphi_1).
\end{align*}
As a conclusion of the above discussions, we get the following.
\begin{thm}
Let $(\mathfrak{g}, [~,~]_\mathfrak{g}, N)$ be a Nijenhuis Lie algebra. Then the infinitesimal of any formal deformation is a $2$-cocycle of the Nijenhuis Lie algebra $(\mathfrak{g}, [~,~]_\mathfrak{g}, N)$  with coefficients in the adjoint Nijenhuis representation. Moreover, the infinitesimals corresponding to equivalent formal deformations are cohomologous, i.e. they correspond to the same cohomology class in $H^2_\mathrm{NLie} (\mathfrak{g}, N)$.
\end{thm}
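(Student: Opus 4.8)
The statement is essentially read off from the order-$t$ coefficients of the structure equations, so the plan is to make those coefficient comparisons precise. First I would record the two families of identities that a pair $(\mu_t=\sum_{i\geq 0}t^i\mu_i,\,N_t=\sum_{i\geq 0}t^i N_i)$ must satisfy in order for $(\mathfrak{g}[[t]],\mu_t,N_t)$ to be a Nijenhuis Lie algebra in the category of ${\bf k}[[t]]$-modules: for every $p\geq 0$ a Jacobi-type identity $\sum_{i+j=p}\big(\mu_i(\mu_j(x,y),z)+\mu_i(\mu_j(y,z),x)+\mu_i(\mu_j(z,x),y)\big)=0$, and a Nijenhuis-type identity $\sum_{i+j+k=p}\mu_i(N_j(x),N_k(y))=\sum_{i+j+k=p}N_i\big(\mu_j(N_k(x),y)+\mu_j(x,N_k(y))-N_k\mu_j(x,y)\big)$. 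For $p=0$ both hold automatically since $\mu_0=[~,~]_\mathfrak{g}$ and $N_0=N$.

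Extracting the coefficient of $t$ gives precisely the two equations $(\ref{def1})$ and $(\ref{def2})$. Equation $(\ref{def1})$ is visibly the Chevalley--Eilenberg $2$-cocycle condition $(\delta_\mathrm{CE}\mu_1)(x,y,z)=0$ with adjoint coefficients. For $(\ref{def2})$ I would move all terms to one side and compare with the explicit formula $(\ref{dn-exp})$ for $d_N$ applied to the $1$-cochain $N_1$, together with the definition of $\partial^N$ applied to the $2$-cochain $\mu_1$, namely $\partial^N(\mu_1)(x,y)=\mu_1(N(x),N(y))-N\mu_1(N(x),y)-N\mu_1(x,N(y))+N^2\mu_1(x,y)$; after using antisymmetry of $[~,~]_\mathfrak{g}$ to flip a couple of brackets, $(\ref{def2})$ becomes exactly $\big(d_N(N_1)+\partial^N(\mu_1)\big)(x,y)=0$. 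Combined with the first condition this says $\delta_\mathrm{NLie}(\mu_1,N_1)=\big(\delta_\mathrm{CE}\mu_1,\,d_N(N_1)+\partial^N(\mu_1)\big)=0$, i.e.\ the infinitesimal $(\mu_1,N_1)$ is a $2$-cocycle of the Nijenhuis Lie algebra with coefficients in the adjoint Nijenhuis representation. The same argument applied to the lowest nonvanishing order shows that if $(\mu_1,N_1)=\cdots=(\mu_l,N_l)=0$ then $(\mu_{l+1},N_{l+1})$ is a $2$-cocycle.

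For the second assertion I would start from an equivalence $\varphi_t=\sum_{i\geq 0}t^i\varphi_i$ with $\varphi_0=\mathrm{Id}_\mathfrak{g}$ between formal deformations $(\mu_t,N_t)$ and $(\mu'_t,N'_t)$, and write the two families of intertwining relations $\sum_{i+j=p}\varphi_i(\mu_j(x,y))=\sum_{i+j+k=p}\mu'_i(\varphi_j(x),\varphi_k(y))$ and $\sum_{i+j=p}N'_i\circ\varphi_j=\sum_{i+j=p}\varphi_i\circ N_j$. The $p=0$ relations are automatic, and the coefficient of $t$ yields $\mu_1-\mu'_1=\delta_\mathrm{CE}\varphi_1$ and $N_1-N'_1=N\circ\varphi_1-\varphi_1\circ N=-\partial^N(\varphi_1)$, the last equality being the $n=1$ instance of $\partial^N$. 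Hence $(\mu_1,N_1)-(\mu'_1,N'_1)=(\delta_\mathrm{CE}\varphi_1,\,-\partial^N\varphi_1)=\delta_\mathrm{NLie}(\varphi_1)$ with $\varphi_1\in C^1_\mathrm{NLie}(\mathfrak{g},N)$, so the two infinitesimals define the same class in $H^2_\mathrm{NLie}(\mathfrak{g},N)$.

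The only part that is not purely formal is the bracket bookkeeping in the second step: turning $(\ref{def2})$ into $\big(d_N(N_1)+\partial^N(\mu_1)\big)(x,y)=0$ requires fully expanding $d_N(N_1)$ from $(\ref{dn-exp})$ --- the two $[N(x_i),N_1(x_j)]_\mathfrak{g}$ terms, the $N_1$ of the deformed-bracket term, and the outer $N$ applied to a Lie-bracket term --- and matching signs against the four terms of $\partial^N(\mu_1)(x,y)$. This is where the content of the computation sits; everything else is reading off coefficients of $t$ and appealing to Proposition \ref{d-nlie} for the existence of the cohomology $H^2_\mathrm{NLie}(\mathfrak{g},N)$.
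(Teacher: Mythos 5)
Your argument is correct and coincides with the paper's own proof: the paper likewise extracts the coefficient of $t$ from the Jacobi-type and Nijenhuis-type identities to obtain (\ref{def1}), (\ref{def2}), identifies these with $\delta_\mathrm{CE}(\mu_1)=0$ and $(d_N(N_1)+\partial^N(\mu_1))=0$, and reads the equivalence relations at order $t$ to get $(\mu_1,N_1)-(\mu_1',N_1')=\delta_\mathrm{NLie}(\varphi_1)$. No gaps; the sign bookkeeping you flag as the only nontrivial step works out exactly as you describe.
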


We have already mentioned earlier that an infinitesimal deformation of a Nijenhuis Lie algebra is an ${\sf R}$-deformation for ${\sf R} = {\bf k}[[t]] /(t^2)$. That is, an infinitesimal deformation can be regarded as a truncated version (module $t^2$) of formal deformation. Equivalences between infinitesimal deformations can be defined similarly.

\begin{thm}
Let $(\mathfrak{g}, [~,~]_\mathfrak{g}, N)$ be a Nijenhuis Lie algebra. Then the set of all equivalence classes of infinitesimal deformations has a bijection with the second cohomology group $H^2_\mathrm{NLie} (\mathfrak{g}, N)$. 
\end{thm}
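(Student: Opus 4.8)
The plan is to set up the standard dictionary between order-one ($t^2 = 0$) deformation data and second cocycles, and between equivalences of such data and second coboundaries, so that the asserted bijection is simply the passage to cohomology classes.

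First I would unwind the definition of an infinitesimal deformation, i.e. an $\mathsf{R}$-deformation for $\mathsf{R} = {\bf k}[[t]]/(t^2)$. Writing $\mu_t = [~,~]_\mathfrak{g} + t \mu_1$ and $N_t = N + t N_1$ (the constant terms being forced by the requirement that $\varepsilon \otimes_\mathbf{k} \mathrm{Id}_\mathfrak{g}$ be a morphism), the condition that $(\mathfrak{g}[[t]]/(t^2), \mu_t, N_t)$ be a Nijenhuis Lie algebra in the category of ${\bf k}[[t]]/(t^2)$-modules amounts to the Jacobi identity for $\mu_t$ together with the Nijenhuis identity for $N_t$, both read modulo $t^2$. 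These are exactly the $p = 0$ and $p = 1$ components of the identities already recorded for formal deformations: the $p = 0$ parts hold because $(\mathfrak{g}, [~,~]_\mathfrak{g}, N)$ is a Nijenhuis Lie algebra, and the $p = 1$ parts are precisely equations (\ref{def1}) and (\ref{def2}), which were shown there to be equivalent to $\delta_\mathrm{CE} \mu_1 = 0$ and $d_N(N_1) + \partial^N(\mu_1) = 0$, that is, to $\delta_\mathrm{NLie}(\mu_1, N_1) = 0$. Hence infinitesimal deformations are in bijection with $2$-cocycles $(\mu_1, N_1) \in C^2_\mathrm{NLie}(\mathfrak{g}, N)$.

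Next I would treat equivalences. An equivalence of infinitesimal deformations is given by $\varphi_t = \mathrm{Id}_\mathfrak{g} + t \varphi_1$, which is automatically invertible modulo $t^2$ with inverse $\mathrm{Id}_\mathfrak{g} - t \varphi_1$, so that every $\varphi_1 \in \mathrm{Hom}(\mathfrak{g}, \mathfrak{g}) = C^1_\mathrm{NLie}(\mathfrak{g}, N)$ occurs. The requirement that $\varphi_t$ be an isomorphism of Nijenhuis Lie algebras modulo $t^2$ intertwining $(\mu_t, N_t)$ and $(\mu'_t, N'_t)$ unwinds, again by the order-one computation already carried out for formal deformations, to $\mu_1 - \mu'_1 = \delta_\mathrm{CE}(\varphi_1)$ and $N_1 - N'_1 = N \circ \varphi_1 - \varphi_1 \circ N = -\partial^N(\varphi_1)$, i.e. $(\mu_1, N_1) - (\mu'_1, N'_1) = \delta_\mathrm{NLie}(\varphi_1)$. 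Thus two infinitesimal deformations are equivalent precisely when the associated $2$-cocycles differ by a $2$-coboundary.

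Finally I would assemble the bijection, sending an infinitesimal deformation $(\mu_t, N_t)$ to the class $[(\mu_1, N_1)] \in H^2_\mathrm{NLie}(\mathfrak{g}, N)$: well-definedness and injectivity follow from the equivalence analysis, and surjectivity from the first step, since a $2$-cocycle $(\mu_1, N_1)$ defines an infinitesimal deformation realizing it. I do not expect a genuine obstacle here beyond bookkeeping; the two matchings needed — the truncated Jacobi/Nijenhuis identities against the cocycle condition, and the truncated morphism condition against the coboundary condition — are essentially the formal-deformation computations restricted to first order in $t$. The one point requiring mild care is to verify that the $\mathsf{R}$-bilinear/$\mathsf{R}$-linear extensions together with the normalization $(\varepsilon \otimes_\mathbf{k} \mathrm{Id}_\mathfrak{g}) \circ \varphi = \varepsilon \otimes_\mathbf{k} \mathrm{Id}_\mathfrak{g}$ indeed force $\mu_0 = [~,~]_\mathfrak{g}$, $N_0 = N$ and $\varphi_0 = \mathrm{Id}_\mathfrak{g}$, so that the deformation and equivalence data are genuinely parametrized by the single pair $(\mu_1, N_1)$ and the single map $\varphi_1$.
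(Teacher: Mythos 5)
Your proposal matches the paper's proof: both reduce the $t^2=0$ deformation and equivalence conditions to the first-order identities already derived for formal deformations, identify them with the $2$-cocycle and $2$-coboundary conditions for $\delta_\mathrm{NLie}$, and then check that the two induced maps between equivalence classes and $H^2_\mathrm{NLie}(\mathfrak{g},N)$ are mutually inverse. The argument is correct and follows essentially the same route as the paper.
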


\begin{proof}
Let $(\mu_t = [~,~]_\mathfrak{g} + t \mu_1, N_t  = N + t N_1)$ be an infinitesimal deformation of the Nijenhuis Lie algebra $(\mathfrak{g}, [~,~]_\mathfrak{g}, N)$. Then similar to the case of formal deformation, one can show that $(\mu_1, N_1)$ is a $2$-cocycle. Moreover, equivalent infinitesimal deformations correspond to cohomologous $2$-cocycles. Hence there is a well-defined map from the set of all equivalence classes of infinitesimal deformations of $(\mathfrak{g}, [~,~]_\mathfrak{g}, N)$ to the second cohomology group $H^2_\mathrm{NLie} (\mathfrak{g}, N)$. To obtain a map in the other direction, we first take a $2$-cocycle $(\mu_1, N_1)$ of the Nijenhuis Lie algebra $(\mathfrak{g}, [~,~]_\mathfrak{g}, N)$ with coefficients in the adjoint Nijenhuis representation. Then it is easy to show that the pair $(\mu_t = [~,~]_\mathfrak{g} + t \mu_1, N_t  = N + t N_1)$ is an infinitesimal deformation. Next, suppose that $(\mu_1, N_1)$ and $(\mu_1' , N_1')$ are two cohomologous $2$-cocycles, say $(\mu_1, N_1) - (\mu_1' , N_1') = \delta_\mathrm{NLie} (\varphi_1)$. Then it turns out that the corresponding infinitesimal deformations $(\mu_t, N_t)$ and $(\mu_t' , N_t')$ are equivalent and an equivalence is given by the map $\varphi_t = \mathrm{Id}_\mathfrak{g} + t \varphi_1$. This shows the existence of a well-defined map from $H^2_\mathrm{NLie} (\mathfrak{g}, N)$ to the set of all equivalence classes of infinitesimal deformations of $(\mathfrak{g}, [~, ~]_\mathfrak{g}, N)$. Finally, the above two constructed maps are inverses to each other. This completes the proof.
\end{proof}

One may also consider finite order deformations of a Nijenhuis Lie algebra and discuss the obstructions for their extensibility. We hope that the obstructions must be third cocycles of the Nijenhuis Lie algebra with coefficients in the adjoint Nijenhuis representation. We ended up with very long computations but couldn't derive. The Maurer-Cartan characterization of a Nijenhuis Lie algebra could be useful to do so (see also Remark \ref{mc-char}). 

\medskip

\medskip

\section{Homotopy Nijenhuis operators on $2$-term $L_\infty$-algebras}\label{sec5}

In this section, we introduce homotopy Nijenhuis operators on $2$-term $L_\infty$-algebras. We shall call a $2$-term $L_\infty$-algebra endowed with a homotopy Nijenhuis operator as a $2$-term Nijenhuis $L_\infty$-algebra. We show that `skeletal' $2$-term Nijenhuis $L_\infty$-algebras are characterized by third cocycles of Nijenhuis Lie algebras. Subsequently, we also consider crossed modules of Nijenhuis Lie algebras that are equivalent to `strict' $2$-term Nijenhuis $L_\infty$-algebras.

\begin{defn}\cite{baez-crans}
    A {\bf $2$-term $L_\infty$-algebra} is a triple $(\mathcal{L}_1 \xrightarrow{\partial} \mathcal{L}_0, l_2, l_3)$ consisting of a $2$-term chain complex $\mathcal{L}_1 \xrightarrow{\partial} \mathcal{L}_0$ endowed with an antisymmetric bilinear map $l_2 : \mathcal{L}_i \times \mathcal{L}_j \rightarrow \mathcal{L}_{i+j}$ (for $0 \leq i, j, i+j \leq 1$) and an antisymmetric trilinear operation $l_3 : \mathcal{L}_0 \times \mathcal{L}_0 \times \mathcal{L}_0 \rightarrow \mathcal{L}_1$ such that for all $x, y, z , w \in \mathcal{L}_0$ and $h, k \in \mathcal{L}_1$, the following set of identities are satisfied:
    \begin{align}
        \partial l_2 (x, h) = ~& l_2 (x, \partial h), \label{2term1}\\
        l_2 (\partial h, k) =~& l_2 (h, \partial k ), \label{2term2}\\
        \partial l_3 (x, y, z) =~& l_2 (x, l_2 (y, z)) + l_2 (y, l_2 (z, x)) + l_2 (z, l_2 (x, y)), \label{2term3}\\
        l_3 (x, y, \partial h) =~& l_2 (x, l_2 (y, h)) + l_2 (y, l_2 (h, x)) + l_2 (h, l_2 (x, y)), \label{2term4}\\ 
        l_2 (x, l_3 (y, z, w)) ~-~& l_2 (y, l_3 (x, z, w)) + l_2 (z, l_3 (x, y, w)) - l_2 (w, l_3 (x, y, z))  - l_3 (l_2 (x, y), z, w) \label{2term5} \\
        +~ \! l_3 (l_2 (x, z), y, w) &- l_3 (l_2 (x, w), y , z) - l_3 (l_2 (y, z), x, w) + l_3 (l_2 (y, w), x, z) - l_3 (l_2 (z, w), x, y) = 0. \nonumber
    \end{align}
\end{defn}

A $2$-term $L_\infty$-algebra $(\mathcal{L}_1 \xrightarrow{\partial} \mathcal{L}_0, l_2, l_3)$ is said to be {\em skeletal} if $\partial = 0$ and {\em strict} if $l_3  = 0$. Baez and Crans \cite{baez-crans} have shown that skeletal $2$-term $L_\infty$-algebras can be characterized by Chevalley-Eilenberg $3$-cocycles of Lie algebras and strict  $2$-term $L_\infty$-algebras are characterized by crossed modules of Lie algebras.

\begin{defn}
    Let $(\mathcal{L}_1 \xrightarrow{\partial} \mathcal{L}_0, l_2, l_3)$ be a $2$-term $L_\infty$-algebra. A {\bf homotopy Nijenhuis operator} on this $2$-term $L_\infty$-algebra is a triple $\mathcal{N} = (\mathcal{N}_0, \mathcal{N}_1, \mathcal{N}_2)$ consisting of linear maps $ \mathcal{N}_0 : \mathcal{L}_0 \rightarrow \mathcal{L}_0$,  $ \mathcal{N}_1 : \mathcal{L}_1 \rightarrow \mathcal{L}_1 $ and an antisymmetric bilinear operation $\mathcal{N}_2 : \mathcal{L}_0 \times \mathcal{L}_0 \rightarrow \mathcal{L}_1$ subject to satisfy the following set of identities:
    \begin{align}
       \partial \circ \mathcal{N}_1 = \mathcal{N}_0 \circ \partial, \label{hn1} 
       \end{align}
       \begin{align}
       \mathcal{N}_0 \big(  l_2 ( \mathcal{N}_0 (x), y) + l_2 (x, \mathcal{N}_0 (y)) - \mathcal{N}_0 (l_2 (x, y))  \big) - l_2 ( \mathcal{N}_0 (x), \mathcal{N}_0 (y))= \partial (\mathcal{N}_2 (x, y)), \label{hn2}
       \end{align}
       \begin{align}
       \mathcal{N}_1 \big(   l_2 ( \mathcal{N}_0 (x), h) + l_2 (x, \mathcal{N}_1 (h)) - \mathcal{N}_1 (l_2 (x, h))    \big) - l_2 (\mathcal{N}_0 (x), \mathcal{N}_1 (h) )= \mathcal{N}_2 (x, \partial h), \label{hn3}
       \end{align}
       \begin{align}
       & \qquad \qquad \qquad \qquad \qquad l_2 (\mathcal{N}_0 (x), \mathcal{N}_2 (y, z)) + l_2  (\mathcal{N}_0 (y), \mathcal{N}_2 ( z, x)) + l_2 (\mathcal{N}_0 (z), \mathcal{N}_2 (x, y)) \label{hn4}\\
       &-\mathcal{N}_2 \big(  l_2 ( \mathcal{N}_0 (x), y) + l_2 (x, \mathcal{N}_0(y)) - \mathcal{N}_0 l_2 (x, y), ~ \! z    \big) - \mathcal{N}_2 \big(  l_2 ( \mathcal{N}_0 (y), z) + l_2 (y, \mathcal{N}_0(z)) - \mathcal{N}_0 l_2 ( y, z), ~ \! x    \big) \nonumber \\
       &  \qquad \qquad \qquad \qquad \qquad  - \mathcal{N}_2 \big(  l_2 ( \mathcal{N}_0 (z), x) + l_2 (z, \mathcal{N}_0(x)) - \mathcal{N}_0 l_2 (z, x),~ \!  y    \big) \nonumber \\
       &- \mathcal{N}_1 \big( l_2 (x, \mathcal{N}_2 (y, z)) +  l_2 (y, \mathcal{N}_2 ( z, x)) +  l_2 (z, \mathcal{N}_2 (x, y))   - \mathcal{N}_2 ( l_2 (x, y), z) - \mathcal{N}_2 ( l_2 (y, z), x) - \mathcal{N}_2 ( l_2 (z, x), y)  \big) \nonumber  \\
       & \quad = l_3 ( \mathcal{N}_0 (x), \mathcal{N}_0 (y), \mathcal{N}_0 (z)) - \mathcal{N}_1 l_3 ( \mathcal{N}_0 (x), \mathcal{N}_0 (y), z) - \mathcal{N}_1 l_3 ( \mathcal{N}_0 (x), y, \mathcal{N}_0 (z)) - \mathcal{N}_1 l_3 ( x, \mathcal{N}_0 (y), \mathcal{N}_0 (z)) \nonumber \\
       &  \qquad \qquad \qquad + \mathcal{N}_1^2 l_3 ( \mathcal{N}_0 (x),  y, z) + \mathcal{N}_1^2 l_3 (x, \mathcal{N}_0 (y), z) + \mathcal{N}_1^2 l_3 (x, y, \mathcal{N}_0 (z)) - \mathcal{N}_1^3 l_3 (x, y, z), \nonumber 
    \end{align}
    for all $x, y, z \in \mathcal{L}_0$ and $h \in \mathcal{L}_1$.
\end{defn}

In \cite{jiang-sheng} the authors have introduced the notion of homotopy relative Rota-Baxter operators on $2$-term $L_\infty$-algebras over some representations. Let $(\mathcal{L}_1 \xrightarrow{\partial} \mathcal{L}_0, l_2, l_3)$ be a $2$-term $L_\infty$-algebra. A {\em representation} of $(\mathcal{L}_1 \xrightarrow{\partial} \mathcal{L}_0, l_2, l_3)$ is a $2$-term chain complex $\mathcal{V}_1 \xrightarrow{\overline{\partial}} \mathcal{V}_0$ endowed with a bilinear map $m_2: \mathcal{L}_i \times \mathcal{V}_j \rightarrow \mathcal{V}_{i+j}$ (for $0 \leq i, j, i+j \leq 1$) and a trilinear operation $m_3: \mathcal{L}_0 \times \mathcal{L}_0 \times \mathcal{V}_0 \rightarrow \mathcal{V}_1$ that is antisymmetric on the first two inputs such that for all $x, y, z \in \mathcal{L}_0$, $h \in \mathcal{L}_1$, $v \in \mathcal{V}_0$ and $p \in \mathcal{V}_1$,
\begin{align*}
    \overline{\partial} m_2 (x, p) =~& m_2 (x, \overline{\partial} p),\\
    m_2 (\partial h, p) =~& m_2 (h, \overline{\partial} p),\\
    \overline{\partial} m_3 (x, y, v ) =~& m_2 (x, m_2 (y, v)) - m_2 (y, m_2 (x, v)) - m_2 (l_2 (x, y), v) ,\\
    m_3 (x, y, \overline{\partial} p) =~& m_2 (x, m_2 (y, p)) - m_2 (y, m_2 (x, p)) - m_2 (l_2 (x, y), p), \\
    m_2 (x, m_3 (y, z, v)) - ~& m_2 (y, m_3 (x, z, v)) + m_2 (z, m_3 (x, y, v)) + m_2 (l_3 (x, y, z), v) - m_3 (l_2 (x, y), z, v)\\
    + m_3 (l_2 (x, z), y, v) &- m_3 (y, z, m_2 (x, v)) - m_3 (l_2 (y, z), x, v) + m_3 (x, z, m_2 (y, v)) - m_3 (x, y, m_2 (z, v)) = 0.
\end{align*}
In this case, the triple $\big( \mathcal{L}_1 \oplus \mathcal{V}_1 \xrightarrow{ \partial + \overline{\partial}} \mathcal{L}_0 \oplus \mathcal{V}_0, l_2 \ltimes m_2, l_3 \ltimes m_3  \big)$ turns out to be a $2$-term $L_\infty$-algebra, where
\begin{align*}
    &(l_2 \ltimes m_2) ((x, u),  (y, v)) := \big(  l_2 (x, y) , ~ \! m_2 (x, v) - m_2 (y, u) \big), \text{ for } (x, u), (y, v) \in \mathcal{L}_0 \oplus \mathcal{V}_0 \text{ or } \mathcal{L}_1 \oplus \mathcal{V}_1,\\
    & \qquad (l_3 \ltimes m_3) ((x, u), (y, v), (z, w)) := \big(   l_3 (x, y, z), ~ \! m_3 (x, y, w) + m_3 (y, z, u) + m_3 (z, x, v) \big),
\end{align*}
for $(x, u), (y, v), (z, w) \in \mathcal{L}_0 \oplus \mathcal{V}_0$. This is called the {\em semidirect product} $2$-term $L_\infty$-algebra.

\medskip

Let $(\mathcal{L}_1 \xrightarrow{\partial} \mathcal{L}_0, l_2, l_3)$ be a $2$-term $L_\infty$-algebra and $(\mathcal{V}_1 \xrightarrow{\overline{\partial}} \mathcal{V}_0, m_2, m_3)$ be a representation of it. Then a {\em homotopy relative Rota-Baxter operator} or a {\em homotopy $\mathcal{O}$-operator} \cite{jiang-sheng} is a triple $r = (r_0 , r_1, r_2)$ of linear maps $r_0 : \mathcal{V}_0 \rightarrow \mathcal{L}_0$, $r_1 : \mathcal{V}_1 \rightarrow \mathcal{L}_1$ and an antisymmetric bilinear map $r_2 : \mathcal{V}_0 \times \mathcal{V}_0 \rightarrow \mathcal{L}_1$ that satisfy
\begin{align*}
& \qquad \qquad \qquad \qquad \partial \circ r_1 = r_0 \circ \overline{\partial},\\
& r_0 \big( m_2 (r_0 (u), v) - m_2 (r_0 (v) , u) \big) - l_2 (r_0 (u), r_0 (v)) = {\partial} (r_2 (u, v)),\\
& r_1 \big(    m_2 (r_0 (u), p) - m_2 (r_1 (p), u) \big) - l_2 (r_0 (u), r_1 (p)) = r_2 (u, \overline{\partial} p),\\
& \big\{ l_2 ( r_0 (u) , r_2 (v, w)) - r_2 \big(  m_2 (r_0 (u), v) - m_2 (r_0 (v), u) , w \big) + r_1 \big(  m_2 ( r_2 (u, v) , w) + m_3 (r_0 (u), r_0 (v), w) \big) \big\} + c. p. \\
& \qquad \qquad \qquad \qquad \qquad = l_3 (r_0 (u), r_0 (v), r_0 (w)), \text{ for } u, v, w \in \mathcal{V}_0, p \in \mathcal{V}_1.
\end{align*}

\begin{prop}
Let $(\mathcal{L}_1 \xrightarrow{\partial} \mathcal{L}_0, l_2, l_3)$ be a $2$-term $L_\infty$-algebra and $(\mathcal{V}_1 \xrightarrow{\overline{\partial}} \mathcal{V}_0, m_2, m_3)$ be a representation of it. Let $r = (r_0, r_1, r_2)$ be a triple consisting of linear maps $r_0 : \mathcal{V}_0 \rightarrow \mathcal{L}_0$, $r_1 : \mathcal{V}_1 \rightarrow \mathcal{L}_1$ and an antisymmetric bilinear map $r_2 : \mathcal{V}_0 \times \mathcal{V}_0 \rightarrow \mathcal{L}_1$. Then $r = (r_0, r_1, r_2)$ is a homotopy relative Rota-Baxter operator if and only if the triple $\widetilde{r} = (\widetilde{r_0}, \widetilde{r_1}, \widetilde{r_2})$ is a homotopy Nijenhuis operator on the semidirect product $2$-term $L_\infty$-algebra $\big( \mathcal{L}_1 \oplus \mathcal{V}_1 \xrightarrow{ \partial + \overline{\partial}} \mathcal{L}_0 \oplus \mathcal{V}_0, l_2 \ltimes m_2, l_3 \ltimes m_3  \big)$, where
\begin{align*}
    &\widetilde{r_0} : \mathcal{L}_0 \oplus \mathcal{V}_0 \rightarrow \mathcal{L}_0 \oplus \mathcal{V}_0 ~ \text{ given by } ~ \widetilde{r_0} (x, u) = (r_0 (u), 0),\\
    &\widetilde{r_1} : \mathcal{L}_1 \oplus \mathcal{V}_1 \rightarrow \mathcal{L}_1 \oplus \mathcal{V}_1 ~ \text{ given by } ~ \widetilde{r_0} (h, p) = (r_1 (p), 0), \\
    &\widetilde{r_2} : (\mathcal{L}_0 \oplus \mathcal{V}_0 ) \times (\mathcal{L}_0 \oplus \mathcal{V}_0) \rightarrow \mathcal{L}_1 \oplus \mathcal{V}_1 ~ \text{ given by } ~ \widetilde{r_2} ((x, u), (y, v)) = (r_2 (u, v), 0).
\end{align*}
\end{prop}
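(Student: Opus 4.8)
The plan is to prove this by a direct verification, exactly parallel to the (already recorded) fact that the lift $\widetilde{r}(x,v) = (r(v),0)$ of a relative Rota-Baxter operator $r : \mathcal{V} \to \mathfrak{g}$ is a Nijenhuis operator on the semidirect product Lie algebra, and to the subcomplex statement following it. Concretely, I would substitute the explicit maps $\widetilde{r_0}, \widetilde{r_1}, \widetilde{r_2}$ into each of the four defining identities (\ref{hn1})--(\ref{hn4}) of a homotopy Nijenhuis operator on the semidirect product $2$-term $L_\infty$-algebra $\big( \mathcal{L}_1 \oplus \mathcal{V}_1 \xrightarrow{\partial + \overline{\partial}} \mathcal{L}_0 \oplus \mathcal{V}_0, l_2 \ltimes m_2, l_3 \ltimes m_3 \big)$, and check that each one is equivalent to the corresponding defining identity of a homotopy relative Rota-Baxter operator. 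The structural facts that drive the computation are: $\widetilde{r_0}$ and $\widetilde{r_1}$ annihilate the $\mathcal{L}$-component of their argument and take values in $\mathcal{L}_0 \oplus 0$, respectively $\mathcal{L}_1 \oplus 0$; consequently $\widetilde{r_0}^2 = 0$, $\widetilde{r_1}^2 = 0$, and $\widetilde{r_1}$ vanishes on any element of $\mathcal{L}_1 \oplus 0$; and $\widetilde{r_2}$ registers only the $\mathcal{V}_0$-components of its two inputs and lands in $\mathcal{L}_1 \oplus 0$.

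Writing a typical element of $\mathcal{L}_0 \oplus \mathcal{V}_0$ as $(x_\mathcal{L}, u)$ and of $\mathcal{L}_1 \oplus \mathcal{V}_1$ as $(h_\mathcal{L}, p)$, I would process the identities in order. Identity (\ref{hn1}) is immediate: $(\partial + \overline{\partial}) \widetilde{r_1}(h_\mathcal{L}, p) = (\partial r_1(p), 0)$ while $\widetilde{r_0}(\partial + \overline{\partial})(h_\mathcal{L}, p) = (r_0 \overline{\partial} p, 0)$, so (\ref{hn1}) is equivalent to $\partial \circ r_1 = r_0 \circ \overline{\partial}$. For (\ref{hn2}) I would expand $(l_2 \ltimes m_2)(\widetilde{r_0}(x), y) = \big(l_2(r_0(u), y_\mathcal{L}), m_2(r_0(u), v)\big)$ together with the two companion expressions, observe that the $\mathcal{L}_0$-summands and the $-\widetilde{r_0}^2$-contribution are invisible to the outer $\widetilde{r_0}$, so the left side of (\ref{hn2}) collapses to $\big(r_0(m_2(r_0(u), v) - m_2(r_0(v), u)) - l_2(r_0(u), r_0(v)), 0\big)$ while the right side is $\big(\partial r_2(u, v), 0\big)$; the $\mathcal{L}_0$-coordinate then reads off the second defining identity of a homotopy relative Rota-Baxter operator and the $\mathcal{V}_0$-coordinate is the trivial identity $0 = 0$. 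Identity (\ref{hn3}) is handled the same way, now involving $\widetilde{r_1}$ and $\widetilde{r_2}$: its $\mathcal{L}_1$-coordinate yields the third defining identity and its $\mathcal{V}_1$-coordinate is $0 = 0$.

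The real work is (\ref{hn4}). Here I would first note that every term on its right-hand side containing $\mathcal{N}_1^2$ or $\mathcal{N}_1^3$ vanishes because $\widetilde{r_1}^2 = 0$, leaving $l_3(\widetilde{r_0} x, \widetilde{r_0} y, \widetilde{r_0} z)$ together with the three terms carrying a single $\mathcal{N}_1$; evaluating $l_3 \ltimes m_3$ on arguments lying in $\mathcal{L}_\bullet \oplus 0$ and using that $m_3$ is linear in its (zeroed) third slot leaves only the $m_3$-contribution with the $\mathcal{V}_0$-argument in the correct position, whose surviving $\mathcal{L}_1$-piece is erased in turn by the outer $\widetilde{r_1}$. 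On the left-hand side the three $l_2(\widetilde{r_0}(\cdot), \widetilde{r_2}(\cdot, \cdot))$ terms contribute $l_2(r_0(u), r_2(v, w))$ and its cyclic permutations; the three $\widetilde{r_2}(\cdots, \cdot)$ terms contribute $r_2(m_2(r_0(u), v) - m_2(r_0(v), u), w)$ and its cyclic permutations; and the single term with an outer $\widetilde{r_1}$, seeing only the $\mathcal{V}_1$-component of its argument, contributes $r_1\big(m_2(r_2(u, v), w) + m_2(r_2(v, w), u) + m_2(r_2(w, u), v)\big)$. Reading off the $\mathcal{L}_1$-coordinate of (\ref{hn4}) and regrouping the three cyclic families according to the cyclic-permutation sum $+\,\mathrm{c.p.}$ in the definition of a homotopy relative Rota-Baxter operator yields its fourth defining identity; the $\mathcal{V}_1$-coordinate of (\ref{hn4}) is once again $0 = 0$, and the $\mathcal{L}_1$-components $x_\mathcal{L}, y_\mathcal{L}, z_\mathcal{L}$ of the inputs disappear because every occurrence of $l_3$ paired with such an input is post-composed with $\widetilde{r_1}$ or sits inside $\widetilde{r_2}$.

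I expect the main obstacle to be purely organizational: keeping track of the two coordinates through the dozen or so terms of (\ref{hn4}), reconciling the cyclic sums built into $l_2 \ltimes m_2$ and $l_3 \ltimes m_3$ with the $+\,\mathrm{c.p.}$ convention used in the definition of a homotopy relative Rota-Baxter operator, and checking carefully that all of the would-be obstruction terms ($\widetilde{r_1}^2 l_3$, $\widetilde{r_1}^3 l_3$, and every application of $\widetilde{r_0}$ or $\widetilde{r_1}$ to an element of the $\mathcal{L}$-summand) genuinely vanish. Since every step of the argument is a coordinatewise equivalence, no separate converse argument is needed: running the same computation in reverse shows that if $\widetilde{r}$ is a homotopy Nijenhuis operator on the semidirect product then $r = (r_0, r_1, r_2)$ satisfies all four defining identities of a homotopy relative Rota-Baxter operator, completing the proof.
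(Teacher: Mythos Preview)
The paper states this proposition without proof, so there is no argument to compare against; your direct verification is the natural approach and is essentially correct.

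There is, however, one bookkeeping slip in your treatment of (\ref{hn4}). The three right-hand terms carrying a single $\widetilde{r_1}$ do \emph{not} vanish. For instance, in $-\widetilde{r_1}\big((l_3 \ltimes m_3)(\widetilde{r_0} x, \widetilde{r_0} y, z)\big)$ the $\mathcal{V}_1$-component of the argument is $m_3(r_0(u), r_0(v), w)$ (the other two $m_3$-summands die because their $\mathcal{V}_0$-slot is zero, as you note), and $\widetilde{r_1}$ sends this to $\big(-r_1(m_3(r_0(u), r_0(v), w)),\, 0\big)$. You correctly observe that the $\mathcal{L}_1$-piece of the argument is invisible to $\widetilde{r_1}$, but the $\mathcal{V}_1$-piece survives and produces a nonzero output. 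Together with its two cyclic companions, moving these contributions to the left-hand side supplies exactly the family $r_1\big(m_3(r_0(u), r_0(v), w)\big) + \mathrm{c.p.}$ that appears inside the curly brace of the fourth defining identity of a homotopy relative Rota-Baxter operator. Your final accounting lists only three cyclic families arising from (\ref{hn4}), but the fourth identity has four such families (alongside the lone $l_3$-term on the right); the missing one comes from here. Once this is corrected, the argument goes through as you describe.
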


Like a Nijenhuis Lie algebra is a Lie algebra equipped with a distinguished Nijenhuis operator, we define a {\bf $2$-term Nijenhuis $L_\infty$-algebra} as pair $ ( (\mathcal{L}_1 \xrightarrow{\partial} \mathcal{L}_0, l_2, l_3), (\mathcal{N}_0, \mathcal{N}_1, \mathcal{N}_2) )$ consisting of a $2$-term $L_\infty$-algebra endowed with a homotopy Nijenhuis operator on it. A $2$-term Nijenhuis $L_\infty$-algebra as above is said to be {\em skeletal} if the underlying $2$-term $L_\infty$-algebra is skeletal (i.e. $\partial = 0$). On the other hand, it is said to be {\em strict} if the underlying $2$-term $L_\infty$-algebra is strict (i.e. $l_3 = 0$) and additionally $\mathcal{N}_2 = 0$.

\begin{thm}
   There is a 1-1 correspondence between skeletal $2$-term Nijenhuis $L_\infty$-algebras and third cocycles of Nijenhuis Lie algebras with coefficients in Nijenhuis representations. 
\end{thm}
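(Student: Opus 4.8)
The plan is to show that, once one imposes $\partial = 0$, the defining identities of a $2$-term Nijenhuis $L_\infty$-algebra decouple precisely into the data of a Nijenhuis Lie algebra, a Nijenhuis representation of it, and a $3$-cocycle of the former with coefficients in the latter. First I would analyze the underlying skeletal $2$-term $L_\infty$-algebra: with $\partial = 0$, identities (\ref{2term1}) and (\ref{2term2}) are vacuous, (\ref{2term3}) says that $l_2$ is a Lie bracket on $\mathcal{L}_0$, (\ref{2term4}) says that $l_2 : \mathcal{L}_0 \times \mathcal{L}_1 \to \mathcal{L}_1$ makes $\mathcal{L}_1$ a representation of the Lie algebra $\mathcal{L}_0$, and (\ref{2term5}) reads $\delta_\mathrm{CE}(l_3) = 0$, i.e. $l_3 \in \mathrm{Hom}(\wedge^3 \mathcal{L}_0, \mathcal{L}_1)$ is a Chevalley--Eilenberg $3$-cocycle (this is the Baez--Crans correspondence, which I would quote). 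Accordingly I set $\mathfrak{g} := \mathcal{L}_0$, $[x,y]_\mathfrak{g} := l_2(x,y)$, $\mathcal{V} := \mathcal{L}_1$, $\rho_x v := l_2(x,v)$, noting that $l_2$ on $\mathcal{L}_1 \times \mathcal{L}_1$ is forced to be zero.

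Next I would unpack the triple $(\mathcal{N}_0, \mathcal{N}_1, \mathcal{N}_2)$ under $\partial = 0$. Identity (\ref{hn1}) is vacuous; the right-hand side of (\ref{hn2}) vanishes, making (\ref{hn2}) exactly the statement that $N := \mathcal{N}_0$ is a Nijenhuis operator on $(\mathfrak{g}, [~,~]_\mathfrak{g})$; and the right-hand side $\mathcal{N}_2(x, \partial h)$ of (\ref{hn3}) vanishes, so that rewriting (\ref{hn3}) with $\rho$ gives $\rho_{N(x)} S(v) = S(\rho_{N(x)} v + \rho_x S(v) - S(\rho_x v))$ with $S := \mathcal{N}_1$, which is precisely the condition that $(\mathcal{V}, \rho, S)$ is a Nijenhuis representation of $(\mathfrak{g}, [~,~]_\mathfrak{g}, N)$. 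The main step is (\ref{hn4}). Here I would check that its right-hand side --- the alternating sum of $l_3$-terms decorated by insertions of $\mathcal{N}_0$ and powers of $\mathcal{N}_1$ --- is exactly $(\partial^{N,S} l_3)(x,y,z)$ for arity $3$, while its left-hand side --- the $l_2$-and-$\mathcal{N}_2$ terms --- is exactly $(d_{N,S} \mathcal{N}_2)(x,y,z)$ once one uses the antisymmetry of $\mathcal{N}_2$ and of the deformed bracket $[~,~]_\mathfrak{g}^N$ to reorganize the cyclic sums into the shuffle form of the coboundary $d_{N,S}$. Thus (\ref{hn4}) is equivalent to $d_{N,S}(\mathcal{N}_2) = \partial^{N,S}(l_3)$, which together with $\delta_\mathrm{CE}(l_3) = 0$ is exactly the cocycle condition $\delta_\mathrm{NLie}(l_3, \mathcal{N}_2) = 0$ in $C^3_\mathrm{NLie}((\mathfrak{g}, N); (\mathcal{V}, S)) = \mathrm{Hom}(\wedge^3\mathfrak{g}, \mathcal{V}) \oplus \mathrm{Hom}(\wedge^2\mathfrak{g}, \mathcal{V})$.

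Finally I would package this as the bijection: to a skeletal $2$-term Nijenhuis $L_\infty$-algebra assign $((\mathfrak{g}, [~,~]_\mathfrak{g}, N), (\mathcal{V}, \rho, S), (l_3, \mathcal{N}_2))$ as above, and conversely, to a Nijenhuis Lie algebra $(\mathfrak{g}, [~,~]_\mathfrak{g}, N)$, a Nijenhuis representation $(\mathcal{V}, \rho, S)$ and a $3$-cocycle $(\chi, F)$, assign the skeletal $2$-term $L_\infty$-algebra with $\mathcal{L}_0 = \mathfrak{g}$, $\mathcal{L}_1 = \mathcal{V}$, $\partial = 0$, $l_2|_{\mathcal{L}_0 \times \mathcal{L}_0} = [~,~]_\mathfrak{g}$, $l_2|_{\mathcal{L}_0 \times \mathcal{L}_1} = \rho$, $l_2|_{\mathcal{L}_1 \times \mathcal{L}_1} = 0$, $l_3 = \chi$, together with the homotopy Nijenhuis operator $(N, S, F)$; these two assignments are manifestly mutually inverse. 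I expect the only real obstacle to be the bookkeeping in (\ref{hn4}) --- checking term by term, with the correct signs, that its two sides are $d_{N,S}(\mathcal{N}_2)$ and $\partial^{N,S}(l_3)$; everything else amounts to comparing definitions.
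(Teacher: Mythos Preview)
Your proposal is correct and follows essentially the same approach as the paper: both unpack the skeletal condition $\partial = 0$ to see that (\ref{2term3}) and (\ref{hn2}) give the Nijenhuis Lie algebra $(\mathcal{L}_0, l_2, \mathcal{N}_0)$, that (\ref{2term4}) and (\ref{hn3}) give the Nijenhuis representation $(\mathcal{L}_1, \rho, \mathcal{N}_1)$, and that (\ref{2term5}) together with (\ref{hn4}) become exactly $\delta_\mathrm{CE}(l_3)=0$ and $d_{N,S}(\mathcal{N}_2)=\partial^{N,S}(l_3)$, i.e.\ the $3$-cocycle condition $\delta_\mathrm{NLie}(l_3,\mathcal{N}_2)=0$. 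Your bookkeeping caveat about (\ref{hn4}) is exactly right, and the converse and mutual-inverse steps match the paper's as well.
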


\begin{proof}
    Let $ ( (\mathcal{L}_1 \xrightarrow{\partial = 0} \mathcal{L}_0, l_2, l_3), (\mathcal{N}_0, \mathcal{N}_1, \mathcal{N}_2) )$ be a skeletal $2$-term Nijenhuis $L_\infty$-algebra. Since $\partial = 0$, it follows from (\ref{2term3}) that the vector space $\mathcal{L}_0$ with the bilinear antisymmetric operation $l_2 : \mathcal{L}_0 \times \mathcal{L}_0 \rightarrow \mathcal{L}_0$ is a Lie algebra. Further, the identity (\ref{hn2}) then implies that the linear map $\mathcal{N}_0 : \mathcal{L}_0 \rightarrow \mathcal{L}_0$ is a Nijenhuis operator on the Lie algebra $(\mathcal{L}_0, l_2)$. In other words, $(\mathcal{L}_0, l_2, \mathcal{N}_0 )$ is a Nijenhuis Lie algebra. On the other hand, it follows from the identities (\ref{2term4}) and (\ref{hn3}) that the pair $(\mathcal{L}_1, \rho, \mathcal{N}_1)$ is a Nijenhuis representation of the Nijenhuis Lie algebra $(\mathcal{L}_0, l_2, \mathcal{N}_0 )$, where $\rho: \mathcal{L}_0 \rightarrow \mathrm{End} (\mathcal{L}_1)$ is given by $\rho_x h:= l_2 (x, h)$, for $x \in \mathcal{L}_0$ and $h \in \mathcal{L}_1$. Finally, the identity (\ref{2term5}) is same as $(\delta_\mathrm{CE} l_3) (x, y, z, w) = 0$ and the identity (\ref{hn4}) can be equivalently rephrased as $d_{\mathcal{N}_0, \mathcal{N}_1} (\mathcal{N}_2) (x, y, z ) = \partial^{\mathcal{N}_0, \mathcal{N}_1} (l_3) (x, y, z)$. Here $\delta_\mathrm{CE}$ is the Chevalley-Eilenberg coboundary operator of the Lie algebra $(\mathcal{L}_0, l_2)$ with coefficients in the representation $(\mathcal{L}_1, \rho)$. Thus, we obtain that
    \begin{align*}
        \delta_\mathrm{NLie} (l_3, \mathcal{N}_2) = \big( \delta_\mathrm{CE} (l_3) ~ \!, ~ \! d_{\mathcal{N}_0, \mathcal{N}_1} (\mathcal{N}_2) - \partial^{\mathcal{N}_0, \mathcal{N}_1} (l_3)  \big) = 0.
    \end{align*}
    This shows that the element $(l_3, \mathcal{N}_2) \in \mathrm{Hom} (\wedge^3 \mathcal{L}_0, \mathcal{L}_1) \oplus \mathrm{Hom} (\wedge^2 \mathcal{L}_0, \mathcal{L}_1)$ is a $3$-cocycle of the Nijenhuis Lie algebra $(\mathcal{L}_0, l_2, \mathcal{N}_0)$ with coefficients in the Nijenhuis representation $(\mathcal{L}_1, \rho, \mathcal{N}_1)$.

    Conversely, let $(\mathfrak{g}, [~,~]_\mathfrak{g}, N)$ be a Nijenhuis Lie algebra, $(\mathcal{V}, \rho, S)$ be a Nijenhuis representation and $(\chi, F)$ be a $3$-cocycle. Then it is straightforward to verify that the pair
    \begin{align*}
         \big(  (\mathcal{V} \xrightarrow{\partial = 0} \mathfrak{g}, l_2, l_3 = \chi), (N, S, F) \big)
    \end{align*}
    is a skeletal $2$-term Nijenhuis $L_\infty$-algebra, where the map $l_2$ is given by
    \begin{align*}
        l_2 (x, y) = [x, y]_\mathfrak{g}, \quad l_2 (x, v) = - l_2 (v, x) = \rho_x v, \text{ for } x, y \in \mathfrak{g} \text{ and } v \in \mathcal{V}.
    \end{align*}
    The above two correspondences are inverses to each other. This completes the proof.
\end{proof}

The notion of crossed modules of Lie algebras was introduced in \cite{baez-crans} while studying strict $2$-term $L_\infty$-algebras. Here we shall generalize this notion in the context of Nijenhuis Lie algebras.

\begin{defn}
    A {\bf crossed module of Nijenhuis Lie algebras} is a quadruple 
    \begin{align*}
        \big(  (\mathfrak{g}, [~,~]_\mathfrak{g}, N), (\mathfrak{h}, [~,~]_\mathfrak{h}, S), t, \rho \big)
    \end{align*}
    consisting of two Nijenhuis Lie algebras $(\mathfrak{g}, [~,~]_\mathfrak{g}, N)$ and $ (\mathfrak{h}, [~,~]_\mathfrak{h}, S)$ endowed with a homomorphism $t : \mathfrak{h} \rightarrow \mathfrak{g}$ of Nijenhuis Lie algebras and a Lie algebra homomorphism $\rho : \mathfrak{g} \rightarrow \mathrm{Der} (\mathfrak{h})$ that satisfy the following conditions:
    \begin{itemize}
        \item[(i)] $(\mathfrak{h}, \rho, S)$ is a Nijenhuis representation of the Nijenhuis Lie algebra $(\mathfrak{g}, [~,~]_\mathfrak{g}, N)$,
        \item[(ii)] for any $x \in \mathfrak{g}$ and $h, k \in \mathfrak{h}$,
        \begin{align*}
            t (\rho_x h) = [x, t (h)]_\mathfrak{g} ~~~ \text{ and } ~~~ \rho_{ t(h)} k = [h, k]_\mathfrak{h}.
        \end{align*}
    \end{itemize}
\end{defn}

\medskip

Let $\big(  (\mathfrak{g}, [~,~]_\mathfrak{g}, N), (\mathfrak{h}, [~,~]_\mathfrak{h}, S), t, \rho \big)$ be a crossed module of Nijenhuis Lie algebras. Then for any $h, k \in \mathfrak{h}$, we observe that
\begin{align*}
    t \big(  [h, k]_\mathfrak{h}^S  \big) =~& t ( [S(h), k]_\mathfrak{h} + [h, S (k)]_\mathfrak{h} - S [h, k]_\mathfrak{h} ) \\
    =~& [t S (h), t(k)]_\mathfrak{g} + [t(h) , t S (k)]_\mathfrak{g} - tS [h, k]_\mathfrak{h} = [t(h), t(k)]^N_\mathfrak{g} \quad (\because t S = Nt)
\end{align*}
which shows that $t : \mathfrak{h}^S \rightarrow \mathfrak{g}^N$ is a homomorphism of deformed Lie algebras. Next, we consider the map $\rho^1 : \mathfrak{g}^N \rightarrow \mathrm{Der} (\mathfrak{h}^S)$ by $\rho^1_x (h) := \rho_{N (x)} h + \rho_x S (h) - S (\rho_x h)$, for $x \in \mathfrak{g}^N$, $h \in \mathfrak{h}^S$. It is easy to see that $\rho^1$ defines a representation of the deformed Lie algebra $\mathfrak{g}^N$ on the space $\mathfrak{h}$. Moreover, for any $x \in \mathfrak{g}$ and $h, k \in \mathfrak{h}$, we have
\begin{align*}
    t (\rho^1_x (h)) = t \big(  \rho_{N (x)} h + \rho_x S (h) - S (\rho_x h)  \big)
    = [N (x), t(h)]_\mathfrak{g} + [x, tS (h)]_\mathfrak{g} - N [x, t (h)]_\mathfrak{g} = [x, t (h)]^N_\mathfrak{g},
    \end{align*}
    \begin{align*}
    \rho^1_{t (h)} (k) = \rho_{N t (h)} k + \rho_{t (h)} S (k) - S (\rho_{t (h)} k)
    = [S (h) , k]_\mathfrak{h} + [h, S (k)]_\mathfrak{h} - S [h, k]_\mathfrak{h} = [h, k]_\mathfrak{h}^S.
\end{align*}
This shows that the quadruple $(\mathfrak{g}^N, \mathfrak{h}^S, t, \rho^1)$ is a crossed module of Lie algebras in the sense of \cite{baez-crans}. More generally, for any $l \geq 0$, one can show that the quadruple $(\mathfrak{g}^{N^l}, \mathfrak{h}^{S^l}, t, \rho^l)$ is a crossed module of Lie algebras, where $\rho^l_x (h) = \rho_{N^l (x) } h + \rho_x S^l (h) - S^l (\rho_x h)$, for $x \in \mathfrak{g}^{N^l}$ and $h \in \mathfrak{h}^{S^l}$.

\begin{thm}
    There is a 1-1 correspondence between strict $2$-term Nijenhuis $L_\infty$-algebras and crossed modules of Nijenhuis Lie algebras.
\end{thm}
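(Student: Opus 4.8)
The plan is to bootstrap off the Baez--Crans correspondence between strict $2$-term $L_\infty$-algebras and crossed modules of Lie algebras \cite{baez-crans}, and to record separately what the homotopy Nijenhuis data contribute once $l_3 = 0$ and $\mathcal{N}_2 = 0$. Starting from a strict $2$-term Nijenhuis $L_\infty$-algebra $\big( (\mathcal{L}_1 \xrightarrow{\partial} \mathcal{L}_0, l_2, l_3 = 0), (\mathcal{N}_0, \mathcal{N}_1, \mathcal{N}_2 = 0) \big)$, I would first invoke Baez--Crans: the underlying strict $2$-term $L_\infty$-structure is equivalent to the crossed module of Lie algebras $(\mathcal{L}_0, \mathcal{L}_1, \partial, \rho)$, where $\mathcal{L}_0$ carries the Lie bracket $l_2$, the space $\mathcal{L}_1$ carries the bracket $[h, k]_{\mathcal{L}_1} := l_2(\partial h, k) = l_2(h, \partial k)$, the action is $\rho_x h := l_2(x, h)$, and the crossed module relations $\partial(\rho_x h) = [x, \partial h]_{\mathcal{L}_0}$ and $\rho_{\partial h} k = [h,k]_{\mathcal{L}_1}$ hold. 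Now put $N := \mathcal{N}_0$ and $S := \mathcal{N}_1$. With $\mathcal{N}_2 = 0$, identity (\ref{hn2}) is exactly the statement that $N$ is a Nijenhuis operator on $(\mathcal{L}_0, l_2)$; identity (\ref{hn1}) reads $\partial \circ S = N \circ \partial$, so $\partial$ is a homomorphism of Nijenhuis Lie algebras; identity (\ref{hn3}) is precisely the Nijenhuis-representation axiom for $(\mathcal{L}_1, \rho, S)$ over $(\mathcal{L}_0, l_2, N)$; and identity (\ref{hn4}), after setting $l_3 = 0$ and $\mathcal{N}_2 = 0$, reduces to the trivial identity $0 = 0$ since every term carries a factor of $l_3$ or $\mathcal{N}_2$.

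The only point not yet covered is that $S$ is a Nijenhuis operator on the Lie algebra $(\mathcal{L}_1, [~,~]_{\mathcal{L}_1})$. This I would derive by substituting $x = \partial h$ into the Nijenhuis-representation axiom $\rho_{N(x)} S(k) = S\big( \rho_{N(x)} k + \rho_x S(k) - S(\rho_x k) \big)$ and using $\rho_{\partial h'} k' = [h', k']_{\mathcal{L}_1}$ together with $\partial S = N \partial$, which turns it into $[S(h), S(k)]_{\mathcal{L}_1} = S\big( [S(h), k]_{\mathcal{L}_1} + [h, S(k)]_{\mathcal{L}_1} - S[h,k]_{\mathcal{L}_1} \big)$. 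Collecting these facts shows that $\big( (\mathcal{L}_0, l_2, N), (\mathcal{L}_1, [~,~]_{\mathcal{L}_1}, S), \partial, \rho \big)$ is a crossed module of Nijenhuis Lie algebras. For the converse, given a crossed module of Nijenhuis Lie algebras $\big( (\mathfrak{g}, [~,~]_\mathfrak{g}, N), (\mathfrak{h}, [~,~]_\mathfrak{h}, S), t, \rho \big)$, I would set $\mathcal{L}_0 = \mathfrak{g}$, $\mathcal{L}_1 = \mathfrak{h}$, $\partial = t$, $l_3 = 0$, $\mathcal{N}_2 = 0$, $\mathcal{N}_0 = N$, $\mathcal{N}_1 = S$, and define $l_2$ by $l_2(x, y) = [x,y]_\mathfrak{g}$, $l_2(x, h) = -l_2(h, x) = \rho_x h$, and $l_2(h, k) = 0$ (the last by degree reasons, as $\mathcal{L}_2 = 0$). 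That $(\mathcal{L}_1 \xrightarrow{\partial} \mathcal{L}_0, l_2, 0)$ satisfies (\ref{2term1})--(\ref{2term5}) is the Baez--Crans correspondence run in reverse, using the crossed module identities for $t$ and $\rho$. It then remains to check (\ref{hn1})--(\ref{hn4}): (\ref{hn1}) is $tS = Nt$, the statement that $t$ is a homomorphism of Nijenhuis Lie algebras; (\ref{hn2}) is the Nijenhuis-operator identity for $N$ on $(\mathfrak{g}, [~,~]_\mathfrak{g})$; (\ref{hn3}) is the Nijenhuis-representation axiom for $(\mathfrak{h}, \rho, S)$, which holds by hypothesis (i) of the crossed module definition; and (\ref{hn4}) holds trivially since $l_3 = 0$ and $\mathcal{N}_2 = 0$.

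Finally, comparing the explicit formulas in the two directions shows the assignments are mutually inverse, which completes the proof. I expect the only genuinely non-bookkeeping step to be the derivation that $S = \mathcal{N}_1$ is a bona fide Nijenhuis operator on $\mathfrak{h}$ from the Nijenhuis-representation axiom together with the crossed module relation $\rho_{t(h)} k = [h,k]_\mathfrak{h}$ (and the dual check, in the converse direction, that this identification is consistent with $[h,k]_\mathfrak{h} = l_2(\partial h, k)$); everything else is either the already-established Baez--Crans dictionary or a term-by-term comparison of the defining identities with $l_3$ and $\mathcal{N}_2$ set to zero.
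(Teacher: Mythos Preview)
Your proposal is correct and follows essentially the same approach as the paper's proof: both directions unpack the Baez--Crans dictionary for the underlying strict $2$-term $L_\infty$-algebra and then record what (\ref{hn1})--(\ref{hn3}) say once $\mathcal{N}_2 = 0$, with (\ref{hn4}) becoming vacuous. Your derivation that $S = \mathcal{N}_1$ is a Nijenhuis operator on $(\mathcal{L}_1, [~,~]_{\mathcal{L}_1})$ by substituting $x = \partial h$ into (\ref{hn3}) and using $\rho_{\partial h} k = [h,k]_{\mathcal{L}_1}$ together with $N\partial = \partial S$ is exactly the computation the paper has in mind when it says this follows from (\ref{2term4}) and (\ref{hn3}); you are simply more explicit about it.
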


\begin{proof}
    Let $( (\mathcal{L}_1 \xrightarrow{\partial} \mathcal{L}_0, l_2, l_3 =0), (\mathcal{N}_0, \mathcal{N}_1, \mathcal{N}_2 = 0))$ be a strict $2$-term Nijenhuis $L_\infty$-algebra. Then it follows from (\ref{2term3}) and (\ref{hn2}) that $(\mathcal{L}_0, l_2, \mathcal{N}_0)$ is a Nijenhuis Lie algebra. We define a bilinear operation $[~,~]_1 : \mathcal{L}_1 \times \mathcal{L}_1 \rightarrow \mathcal{L}_1$ by $[h, k]_1 := l_2 (\partial h , k) = \partial (h, \partial k)$, for $h, k \in \mathcal{L}_1$. This operation is antisymmetric as $l_2$ is so. Moreover, the identities in (\ref{2term4}) and (\ref{hn3}) then implies that $(\mathcal{L}_1, [~,~]_1, \mathcal{N}_1)$ is a Nijenhuis Lie algebra. Further, from (\ref{2term1}) and (\ref{hn1}), we get that the map $\partial : \mathcal{L}_1 \rightarrow \mathcal{L}_0$ is a homomorphism of Nijenhuis Lie algebras. Finally, we set a map $\rho : \mathcal{L}_0 \rightarrow \mathrm{End} (\mathcal{L}_1)$ by $\rho_x h := l_2 (x, h)$, for $x \in \mathcal{L}_0$ and $h \in \mathcal{L}_1$. Then it is easy to see from (\ref{2term4}) that $\rho$ is a Lie algebra homomorphism and additionally $\rho_x \in \mathrm{Der} (\mathcal{L}_1),$ for $x \in \mathcal{L}_0$. Further, it follows from (\ref{hn3}) that 
    \begin{align*}
        \rho_{\mathcal{N}_0 (x)} \mathcal{N}_1 (h) = \mathcal{N}_1 \big(   \rho_{\mathcal{N}_0 (x)} h + \rho_x \mathcal{N}_1 (h) - \mathcal{N}_1 (\rho_x h)  \big), \text{ for } x \in \mathcal{L}_0, h \in \mathcal{L}_1.
    \end{align*}
    This shows that $(\mathcal{L}_1, \rho, \mathcal{N}_1)$ is a Nijenhuis representation of the Nijenhuis Lie algebra $(\mathcal{L}_0, l_2, \mathcal{N}_0)$. For any $x \in \mathcal{L}_0$ and $h, k \in \mathcal{L}_1$, we also have
    \begin{align*}
        \partial (\rho_x h) = \partial l_2 (x, h) = l_2 (x, \partial h) ~~~~ \text{ and } ~~~~ \rho_{\partial (h)} k = l_2 (\partial h, k) = [h, k]_1
    \end{align*}
    which concludes that the quadruple $(  (\mathcal{L}_0, l_2, \mathcal{N}_0), (\mathcal{L}_1, [~,~]_1, \mathcal{N}_1), \partial, \rho )$ is a crossed module of Nijenhuis Lie algebras.

    Conversely, let $ \big(  (\mathfrak{g}, [~,~]_\mathfrak{g}, N), (\mathfrak{h}, [~,~]_\mathfrak{h}, S), t, \rho \big)$ be a crossed module of Nijenhuis Lie algebras. Then it is straightforward to verify that $( (\mathfrak{h} \xrightarrow{t} \mathfrak{g}, l_2, l_3 = 0), (N, S, \mathcal{N}_2 = 0))$ is a strict $2$-term Nijenhuis $L_\infty$-algebra, where $l_2 (x, y) := [x, y]_\mathfrak{g}$ and $l_2 (x, h) = - l_2 (h, x) := \rho_x h$, for all $x, y \in \mathfrak{g}$ and $h \in \mathfrak{h}$. This completes the proof.
\end{proof}

\medskip

\medskip

\section{Nijenhuis Lie bialgebras}\label{sec6}

In this section, we first introduce matched pairs and Manin triples of Nijenhuis Lie algebras. We show that they are equivalent to generic Nijenhuis Lie bialgebras where the Nijenhuis operators on the underlying Lie algebras and Lie coalgebras need not be the same. Subsequently, we consider the admissible classical Yang-Baxter equation (admissible CYBE) whose antisymmetric solutions give rise to Nijenhuis Lie bialgebras. Finally, we define relative Rota-Baxter operators or $\mathcal{O}$-operators on Nijenhuis Lie algebras that yield antisymmetric solutions of the admissible CYBE, and hence produce Nijenhuis Lie bialgebras.

 First, recall that a {\em matched pair of Lie algebras} \cite{majid-lie} is a quadruple $((\mathfrak{g}, [~,~]_\mathfrak{g}), (\mathfrak{h}, [~,~]_\mathfrak{h}), \rho, \nu )$ consisting of two Lie algebras $(\mathfrak{g}, [~,~]_\mathfrak{g})$ and $(\mathfrak{g}, [~,~]_\mathfrak{g})$ endowed with linear maps $\rho : \mathfrak{g} \rightarrow \mathrm{End} (\mathfrak{h})$ and $\nu : \mathfrak{h} \rightarrow \mathrm{End} (\mathfrak{g})$ such that
        \begin{itemize}
            \item $\rho$ defines a representation of the Lie algebra $(\mathfrak{g}, [~,~]_\mathfrak{g})$ on the vector space $\mathfrak{h}$,
            \item $\nu$ defines a representation of the Lie algebra $(\mathfrak{h}, [~,~]_\mathfrak{h})$ on the vector space $\mathfrak{g}$
        \end{itemize}
        satisfying additionally
        \begin{align*}
            \rho_x ([h, k]_\mathfrak{h}) =~& [\rho_x h , k ]_\mathfrak{h} + [h , \rho_x k]_\mathfrak{h} + \rho_{\nu_k x} h - \rho_{\nu_h x} k,\\
            \nu_h ([x, y]_\mathfrak{g}) =~& [\nu_h x, y]_\mathfrak{g} + [x, \nu_h y]_\mathfrak{g} + \nu_{\rho_y h} x - \nu_{\rho_x h} y, 
        \end{align*}
        for all $x, y \in \mathfrak{g}$ and $h, k \in \mathfrak{h}$. It follows that if  $((\mathfrak{g}, [~,~]_\mathfrak{g}), (\mathfrak{h}, [~,~]_\mathfrak{h}), \rho, \nu)$ is a matched pair of Lie algebras then the direct sum $\mathfrak{g} \oplus \mathfrak{h}$ inherits a Lie bracket given by
        \begin{align}\label{bicrossed-bracket}
            [ (x, h) , (y, k)]_\Join := \big(  [x, y]_\mathfrak{g} + \nu_h y - \nu_k x ~ \! , ~ \! [h, k]_\mathfrak{h} + \rho_x k - \rho_y x \big),
        \end{align}
for $(x, h), (y, k) \in \mathfrak{g} \oplus \mathfrak{h}$. The Lie algebra $(\mathfrak{g} \oplus \mathfrak{h}, [~,~]_\Join)$ is said to be the {\em bicrossed product} of the given matched pair of Lie algebras. 

\begin{defn}
    A {\bf matched pair of Nijenhuis Lie algebras} is a tuple $( (\mathfrak{g}, [~,~]_\mathfrak{g}, N), (\mathfrak{h}, [~,~]_\mathfrak{h}, S), \rho, \nu)$ of two Nijenhuis Lie algebras $(\mathfrak{g}, [~,~]_\mathfrak{g}, N)$ and $(\mathfrak{h}, [~,~]_\mathfrak{h}, S)$ with linear maps $\rho : \mathfrak{g} \rightarrow \mathrm{End}(\mathfrak{h})$ and $\nu : \mathfrak{h} \rightarrow \mathrm{End} (\mathfrak{g})$ such that
    \begin{itemize}
        \item $((\mathfrak{g}, [~,~]_\mathfrak{g}), (\mathfrak{h}, [~,~]_\mathfrak{h}), \rho, \nu )$ is a matched pair of Lie algebras,
        \item $(\mathfrak{h}, \rho, S)$ is a representation of the Nijenhuis Lie algebra $(\mathfrak{g}, [~,~]_\mathfrak{g}, N)$,
        \item $(\mathfrak{g}, \nu , N)$ is a representation of the Nijenhuis Lie algebra $(\mathfrak{h}, [~,~]_\mathfrak{h}, S)$.
    \end{itemize}
 \end{defn}

 The following result shows that the bicrossed product construction can be generalized in a matched pair of Nijenhuis Lie algebras.

 \begin{prop}\label{prop-bicrossed-nlie}
     Let $( (\mathfrak{g}, [~,~]_\mathfrak{g}, N), (\mathfrak{h}, [~,~]_\mathfrak{h}, S), \rho, \nu)$ be a matched pair of Nijenhuis Lie algebras. Then the tuple $(\mathfrak{g} \oplus \mathfrak{h}, [~,~]_\Join, N \oplus S)$ is a Nijenhuis Lie algebra, where $[~,~]_\Join$ is the bicrossed product Lie bracket on $\mathfrak{g} \oplus \mathfrak{h}$ given in (\ref{bicrossed-bracket}).
 \end{prop}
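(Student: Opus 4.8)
The plan is to verify directly that $N \oplus S : \mathfrak{g} \oplus \mathfrak{h} \to \mathfrak{g} \oplus \mathfrak{h}$ satisfies the Nijenhuis operator identity with respect to the bicrossed product bracket $[~,~]_\Join$. First I would take two generic elements $(x, h), (y, k) \in \mathfrak{g} \oplus \mathfrak{h}$ and compute both sides of
\begin{align*}
    [(N \oplus S)(x, h), (N \oplus S)(y, k)]_\Join = (N \oplus S)\big( [(N \oplus S)(x, h), (y, k)]_\Join + [(x, h), (N \oplus S)(y, k)]_\Join - (N \oplus S)[(x, h), (y, k)]_\Join \big),
\end{align*}
using the explicit formula $(\ref{bicrossed-bracket})$ for $[~,~]_\Join$ and the fact that $(N \oplus S)(x, h) = (N(x), S(h))$. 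Expanding the $\mathfrak{g}$-component and the $\mathfrak{h}$-component separately, each side splits into terms involving only $[~,~]_\mathfrak{g}$ and $N$; terms involving only $[~,~]_\mathfrak{h}$ and $S$; and cross terms involving $\rho$, $N$, $S$ (and, in the $\mathfrak{g}$-component, $\nu$, $N$, $S$).

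The strategy is then to observe that the $\mathfrak{g}$-only terms cancel precisely because $N$ is a Nijenhuis operator on $(\mathfrak{g}, [~,~]_\mathfrak{g})$, and the $\mathfrak{h}$-only terms cancel because $S$ is a Nijenhuis operator on $(\mathfrak{h}, [~,~]_\mathfrak{h})$. The remaining cross terms in the $\mathfrak{h}$-component — those built from $\rho$ applied with $N$ and $S$ inserted in various slots — must vanish by the compatibility condition that $(\mathfrak{h}, \rho, S)$ is a Nijenhuis representation of $(\mathfrak{g}, [~,~]_\mathfrak{g}, N)$, i.e. the identity $\rho_{N(x)} S(v) = S(\rho_{N(x)} v + \rho_x S(v) - S(\rho_x v))$. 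Symmetrically, the cross terms in the $\mathfrak{g}$-component built from $\nu$ with $N$ and $S$ inserted vanish because $(\mathfrak{g}, \nu, N)$ is a Nijenhuis representation of $(\mathfrak{h}, [~,~]_\mathfrak{h}, S)$. It is worth recording that this computation can be organized more cleanly by noting that the underlying Lie algebra identity (that $[~,~]_\Join$ is a Lie bracket) is already guaranteed by the matched pair of Lie algebras structure, so only the Nijenhuis identity for $N \oplus S$ needs checking, and by bilinearity it suffices to check it on pairs of the form $(x, 0), (y, 0)$; $(x, 0), (0, k)$; and $(0, h), (0, k)$. On $(x, 0), (y, 0)$ one recovers exactly the Nijenhuis identity for $N$ in the $\mathfrak{g}$-slot (the $\mathfrak{h}$-slot being forced to $0$), on $(0, h), (0, k)$ one recovers the Nijenhuis identity for $S$, and the mixed case $(x, 0), (0, k)$ is where the two representation compatibility conditions come into play.

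The main obstacle I anticipate is bookkeeping in the mixed case: when one expands $[(N(x), 0), (0, S(k))]_\Join$, $[(N(x), 0), (0, k)]_\Join$, $[(x, 0), (0, S(k))]_\Join$ and their $N \oplus S$-images, one gets a fairly large collection of terms in both components, and it requires care to see that the $\mathfrak{h}$-component collapses to the Nijenhuis representation identity for $(\mathfrak{h}, \rho, S)$ while the $\mathfrak{g}$-component collapses to the Nijenhuis representation identity for $(\mathfrak{g}, \nu, N)$. Since these are exactly the defining axioms of a matched pair of Nijenhuis Lie algebras, no further conditions are needed, and the proof amounts to a routine (if lengthy) verification. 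Alternatively — and this is perhaps the cleanest packaging — one can invoke Proposition \ref{prop-semid-nlie} iteratively: the bicrossed product is a Lie algebra containing $\mathfrak{g}$ and $\mathfrak{h}$ as (Nijenhuis) subalgebras, and $N \oplus S$ restricted to each is the given Nijenhuis operator, so the Nijenhuis identity on mixed arguments is the only thing left, which is precisely what the representation compatibility conditions provide; I would present the direct computation but note this structural reading.
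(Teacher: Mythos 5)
Your proposal is correct and follows essentially the same route as the paper: the author likewise reduces to verifying the Nijenhuis identity for $N\oplus S$ on the bicrossed product (the Lie bracket property being automatic from the underlying matched pair of Lie algebras) and expands $[(N\oplus S)(x,h),(N\oplus S)(y,k)]_\Join$ componentwise, absorbing the $\mathfrak{g}$-only and $\mathfrak{h}$-only terms via the Nijenhuis identities for $N$ and $S$ and the cross terms via the two Nijenhuis representation conditions. Your bilinearity reduction to the three types of pairs is a harmless reorganization of the same computation.
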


 \begin{proof}
     It is enough to show that the map $N \oplus S : \mathfrak{g} \oplus \mathfrak{h} \rightarrow \mathfrak{g} \oplus \mathfrak{h}$ is a Nijenhuis operator on the bicrossed product Lie algebra $(\mathfrak{g} \oplus \mathfrak{h}, [~,~]_\Join)$. For any $(x, h), (y, k) \in \mathfrak{g} \oplus \mathfrak{h}$, we observe that
     \begin{align*}
         &[ (N \oplus S ) (x, h), (N \oplus S) (y, k)]_\Join \\
         &= [ ( N (x), S (h)) , (N (y), S (k))]_\Join  \\
         &= \big(  [N (x), N (y)]_\mathfrak{g} + \nu_{S (h)} N (y) - \nu_{S (k)} N (x) ~ \! , ~ \! [S (h), S(k)]_\mathfrak{h} + \rho_{N (x)} S (k) - \rho_{N (y)} S (h)   \big) \\
         &= \big(  N \big( [N (x), y]_\mathfrak{g} + [x, N (y)]_\mathfrak{g} - N [x, y]_\mathfrak{g} + \nu_{S (h)} y + \nu_{h} N (y) - N (\nu_h y) - \nu_{S (k)} x - \nu_{k } N (x) + N (\nu_k x) \big), \\
         & \qquad  S \big( [ S (h), k]_\mathfrak{h} + [h, S (k)]_\mathfrak{h} - S [h, k]_\mathfrak{h} + \rho_{N (x)} k + \rho_x S (k) - S (\rho_x k) - \rho_{N (y)} h - \rho_y S (h) + S (\rho_y h)  \big) \big)\\
         &= (N \oplus S ) \big(  \big( [N (x), y]_\mathfrak{g} + \nu_{S(h)} y - \nu_k N (x) ~ \!, ~ \! [S(h), k]_\mathfrak{h} + \rho_{N (x)} k - \rho_y S (h) \big) \\
         & \qquad \qquad \qquad  + \big(  [x, N (y)]_\mathfrak{g} + \nu_h N (y) - \nu_{S(k)} x ~ \! , ~ \! [h, S(k)]_\mathfrak{h} + \rho_x S (k) - \rho_{N(y)} h  \big) \\
         & \qquad \qquad \qquad - (N \oplus S) \big(  [x, y]_\mathfrak{g} + \nu_h y - \nu_k x ~ \! , ~ \! [h, k]_\mathfrak{h} + \rho_x k - \rho_y h \big) \big) \\
         &= (N \oplus S) \big(  [(N \oplus S) (x, h), (y, k)]_\Join + [ (x, h), (N \oplus S) (y, k)]_\Join - (N \oplus S) [(x, h), (y, k)]_\Join   \big).
     \end{align*}
     This proves the desired result.
 \end{proof}

 Let $((\mathfrak{g}, [~,~]_\mathfrak{g}, N) , (\mathfrak{h}, [~, ~ ]_\mathfrak{h}, S), \rho, \nu)$ be a matched pair of Nijenhuis Lie algebras. Then it follows from the previous proposition that $(\mathfrak{g} \oplus \mathfrak{h}, [~,~]_\Join^{N \oplus S})$ is a Lie algebra, where
 \begin{align*}
     [(x, h), (y, k)]_\Join^{N \oplus S} = [(N (x), S(h) ), (y, k)]_\Join + [(x, h), (N (y), S (k))]_\Join - (N \oplus S) [(x, h), (y, k)]_\Join.
 \end{align*}
 Further, it turns out that the deformed Lie algebras $(\mathfrak{g}, [~,~]_\mathfrak{g}^N)$ and $(\mathfrak{h}, [~,~]_\mathfrak{h}^S)$ are both Lie subalgebras of $(\mathfrak{g} \oplus \mathfrak{h}, [~,~]_\Join^{N \oplus S})$. More generally, the quadruple $( (\mathfrak{g}, [~,~]_\mathfrak{g}^N) , (\mathfrak{h}, [~,~]_\mathfrak{h}^S), \rho^1, \nu^1)$ constitute a matched pair of Lie algebras, where
 \begin{align*}
     \rho^1_x (h) = \rho_{N(x)} h + \rho_x S (h) - S (\rho_x h) ~~~~ \text{ and } ~~~~ \nu_h^1 (x) = \nu_{S(h)} x + \nu_h N (x) - N (\nu_h x), \text{ for } x \in \mathfrak{g}, h \in \mathfrak{h}.
 \end{align*}
 In general, for any $l \geq 0$, the quadruple $((\mathfrak{g}, [~,~]_\mathfrak{g}^{N^l}), (\mathfrak{h}, [~,~]_\mathfrak{h}^{S^l}), \rho^l, \nu^l )$ is a matched pair of Lie algebras and the corresponding bicrossed product is $(\mathfrak{g} \oplus \mathfrak{h}, [~,~]_\Join^{(N \oplus S)^l}).$

\medskip

 In the following, we consider Manin triples of Nijenhuis Lie algebras generalizing the well-known Manin triples of Lie algebras. First, we recall that \cite{chari} a (standard) {\em Manin triple of Lie algebras} is a triple $((\mathfrak{g} \oplus \mathfrak{g}^*, [~, ~ ]_{\mathfrak{g} \oplus \mathfrak{g}^*} ), (\mathfrak{g}, [~,~]_\mathfrak{g}), (\mathfrak{g}^*, [~,~]_{\mathfrak{g}^*}) )$ consisting of a Lie algebra $(\mathfrak{g} \oplus \mathfrak{g}^*, [~, ~ ]_{\mathfrak{g} \oplus \mathfrak{g}^*} )$ with two Lie subalgebras $(\mathfrak{g}, [~,~]_\mathfrak{g})$ and $(\mathfrak{g}^*, [~,~]_{\mathfrak{g}^*})$ such that the natural nondegenerate symmetric bilinear form $\mathcal{B}$ on the Lie algebra $(\mathfrak{g} \oplus \mathfrak{g}^*, [~, ~ ]_{\mathfrak{g} \oplus \mathfrak{g}^*} )$ given by
 \begin{align*}
     \mathcal{B} (x + \alpha, y +\beta ) = \alpha (y) + \beta (x), \text{ for } x +\alpha, y + \beta \in \mathfrak{g} \oplus \mathfrak{g}^*
 \end{align*}
 is ad-invariant.

 \begin{defn}
     A {\bf Manin triple of Nijenhuis Lie algebras} is a triple 
     \begin{align*}
         ((\mathfrak{g} \oplus \mathfrak{g}^*, [~, ~ ]_{\mathfrak{g} \oplus \mathfrak{g}^*}, N_{\mathfrak{g} \oplus \mathfrak{g}^*} ), (\mathfrak{g}, [~,~]_\mathfrak{g}, N), (\mathfrak{g}^*, [~,~]_{\mathfrak{g}^*}, S^*) )
     \end{align*}
 consisting of a Nijenhuis Lie algebra $(\mathfrak{g} \oplus \mathfrak{g}^*, [~, ~ ]_{\mathfrak{g} \oplus \mathfrak{g}^*}, N_{\mathfrak{g} \oplus \mathfrak{g}^*} )$ with two Nijenhuis Lie subalgebras $ (\mathfrak{g}, [~,~]_\mathfrak{g}, N)$ and $ (\mathfrak{g}^*, [~,~]_{\mathfrak{g}^*}, S^*)$ such that $((\mathfrak{g} \oplus \mathfrak{g}^*, [~, ~ ]_{\mathfrak{g} \oplus \mathfrak{g}^*} ), (\mathfrak{g}, [~,~]_\mathfrak{g}), (\mathfrak{g}^*, [~,~]_{\mathfrak{g}^*}) )$ is a Manin triple of Lie algebras.
  \end{defn}

  In a Manin triple of Nijenhuis Lie algebras, since $(\mathfrak{g}, [~,~]_\mathfrak{g}, N) $ and $ (\mathfrak{g}^*, [~,~]_{\mathfrak{g}^*}, S^*)$ are both Nijenhuis Lie subalgebras of $(\mathfrak{g} \oplus \mathfrak{g}^*, [~, ~ ]_{\mathfrak{g} \oplus \mathfrak{g}^*}, N_{\mathfrak{g} \oplus \mathfrak{g}^*} )$, it turns out that $N_{\mathfrak{g} \oplus \mathfrak{g}^*} = N \oplus S^*$.

  \begin{prop}\label{last-prop1}
    Let $(\mathfrak{g}, [~,~]_\mathfrak{g}, N)$ be a Nijenhuis Lie algebra. Suppose there is a Nijenhuis Lie algebra structure $(\mathfrak{g}^*, [~,~]_{\mathfrak{g}^*}, S^*)$ on the dual vector space $\mathfrak{g}^*$. Then $( (\mathfrak{g}, [~,~]_\mathfrak{g}, N), (\mathfrak{g}^*, [~,~]_{\mathfrak{g}^*}, S^*), \mathrm{ad}^*_\mathfrak{g}, \mathrm{ad}^*_{\mathfrak{g}^*})$ is a matched pair of Nijenhuis Lie algebras if and only if $((\mathfrak{g} \oplus \mathfrak{g}^*, [~,~]_\Join, N \oplus S^*), (\mathfrak{g}, [~,~]_\mathfrak{g}, N), (\mathfrak{g}^*, [~,~]_{\mathfrak{g}^*}, S^*)  )$ is a Manin triple of Nijenhuis Lie algebras.
\end{prop}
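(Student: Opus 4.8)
The plan is to split the equivalence into its underlying Lie-algebra content and its genuinely Nijenhuis content. First I would invoke the classical correspondence between matched pairs and Manin triples of Lie algebras (\cite{majid-lie}, \cite{chari}): the quadruple $((\mathfrak{g}, [~,~]_\mathfrak{g}), (\mathfrak{g}^*, [~,~]_{\mathfrak{g}^*}), \mathrm{ad}^*_\mathfrak{g}, \mathrm{ad}^*_{\mathfrak{g}^*})$ is a matched pair of Lie algebras if and only if $((\mathfrak{g}\oplus\mathfrak{g}^*, [~,~]_\Join), (\mathfrak{g}, [~,~]_\mathfrak{g}), (\mathfrak{g}^*, [~,~]_{\mathfrak{g}^*}))$ is a Manin triple of Lie algebras with respect to the canonical pairing $\mathcal{B}$. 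This settles the ``underlying Lie algebra'' clauses on both sides of the statement. Moreover, as already observed just before the proposition, in any Manin triple of Nijenhuis Lie algebras on $\mathfrak{g}\oplus\mathfrak{g}^*$ the ambient Nijenhuis operator is forced to be $N\oplus S^*$. Hence, granting the classical correspondence, the proposition reduces to the single equivalence: the pair of conditions ``$(\mathfrak{g}^*, \mathrm{ad}^*_\mathfrak{g}, S^*)$ is a Nijenhuis representation of $(\mathfrak{g}, [~,~]_\mathfrak{g}, N)$ and $(\mathfrak{g}, \mathrm{ad}^*_{\mathfrak{g}^*}, N)$ is a Nijenhuis representation of $(\mathfrak{g}^*, [~,~]_{\mathfrak{g}^*}, S^*)$'' holds if and only if $N\oplus S^*$ is a Nijenhuis operator on the bicrossed product $(\mathfrak{g}\oplus\mathfrak{g}^*, [~,~]_\Join)$.

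For the forward direction I would simply apply Proposition \ref{prop-bicrossed-nlie}: the two representation conditions are exactly what it takes for $((\mathfrak{g}, [~,~]_\mathfrak{g}, N), (\mathfrak{g}^*, [~,~]_{\mathfrak{g}^*}, S^*), \mathrm{ad}^*_\mathfrak{g}, \mathrm{ad}^*_{\mathfrak{g}^*})$ to be a matched pair of Nijenhuis Lie algebras, so that proposition delivers that $N\oplus S^*$ is a Nijenhuis operator on $(\mathfrak{g}\oplus\mathfrak{g}^*, [~,~]_\Join)$; since $\mathfrak{g}$ and $\mathfrak{g}^*$ are plainly invariant under $N\oplus S^*$ they are Nijenhuis Lie subalgebras, and together with the classical correspondence this yields a Manin triple of Nijenhuis Lie algebras.

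For the converse I would feed ``mixed'' test elements into the Nijenhuis identity for $N\oplus S^*$ on the bicrossed product, namely $X=(x,0)$ and $Y=(0,\beta)$ with $x\in\mathfrak{g}$ and $\beta\in\mathfrak{g}^*$. Using the explicit bicrossed bracket (\ref{bicrossed-bracket}) (with $\rho=\mathrm{ad}^*_\mathfrak{g}$, $\nu=\mathrm{ad}^*_{\mathfrak{g}^*}$) and the block form of $N\oplus S^*$, the identity decomposes into its $\mathfrak{g}$- and $\mathfrak{g}^*$-components; the $\mathfrak{g}^*$-component reduces precisely to $\mathrm{ad}^*_{N(x)}S^*(\beta)=S^*\big(\mathrm{ad}^*_{N(x)}\beta+\mathrm{ad}^*_x S^*(\beta)-S^*(\mathrm{ad}^*_x\beta)\big)$, the Nijenhuis-representation axiom for $(\mathfrak{g}^*, \mathrm{ad}^*_\mathfrak{g}, S^*)$, while the $\mathfrak{g}$-component reduces to the Nijenhuis-representation axiom for $(\mathfrak{g}, \mathrm{ad}^*_{\mathfrak{g}^*}, N)$. (Testing instead with $X=(x,0),Y=(y,0)$ or with $X=(0,\alpha),Y=(0,\beta)$ merely reproduces the Nijenhuis identities for $N$ on $\mathfrak{g}$ and for $S^*$ on $\mathfrak{g}^*$, both of which are part of the hypotheses.) Combining with the classical correspondence recovers the matched pair of Nijenhuis Lie algebras.

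The only genuine labor is the bookkeeping in that last expansion: one must check that the cross terms coming from the off-diagonal parts of the bicrossed bracket and from the square $(N\oplus S^*)^2$ cancel so as to leave exactly the two representation axioms, and nothing extra. Everything else is a direct citation of classical Manin-triple theory or a one-line appeal to Proposition \ref{prop-bicrossed-nlie}, and I do not anticipate any conceptual difficulty.
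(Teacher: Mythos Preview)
Your proposal is correct and follows essentially the same route as the paper's proof: both invoke the classical matched-pair/Manin-triple correspondence to dispose of the underlying Lie-algebra content, then reduce the Nijenhuis content to the equivalence between the two Nijenhuis-representation identities (your displayed axioms, the paper's (\ref{indd1}) and (\ref{indd2})) and the condition that $N\oplus S^*$ be a Nijenhuis operator on $(\mathfrak{g}\oplus\mathfrak{g}^*,[~,~]_\Join)$. The paper simply asserts this last equivalence, whereas you spell it out by citing Proposition~\ref{prop-bicrossed-nlie} for one direction and testing mixed elements $(x,0),(0,\beta)$ for the other; this is exactly the bookkeeping the paper's assertion implicitly rests on.
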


\begin{proof}
    It is well-known that $((\mathfrak{g}, [~,~]_\mathfrak{g}), (\mathfrak{g}^*, [~,~]_{\mathfrak{g}^*}, \mathrm{ad}^*_\mathfrak{g}, \mathrm{ad}^*_{\mathfrak{g}^*})$ is a matched pair of Lie algebras if and only if $((\mathfrak{g} \oplus \mathfrak{g}^*, [~,~]_\Join ), (\mathfrak{g}, [~,~]_\mathfrak{g}), (\mathfrak{g}^*, [~,~]_{\mathfrak{g}^*} )$ is a Manin triple of Lie algebras \cite{chari}. Next, the triple $(\mathfrak{g}^*, \mathrm{ad}^*_\mathfrak{g}, S^*)$ is a Nijenhuis representation of the Nijenhuis Lie algebra $(\mathfrak{g}, [~,~]_\mathfrak{g}, N)$ if and only if
    \begin{align}\label{indd1}
         (\mathrm{ad}^*_\mathfrak{g})_{N(x)} S^* (\alpha) = S^* \big(  (\mathrm{ad}^*_\mathfrak{g})_{N(x)} \alpha + (\mathrm{ad}^*_\mathfrak{g})_x S^* (\alpha) - S^* ( (\mathrm{ad}^*_\mathfrak{g})_x \alpha )    \big),
    \end{align}
    for all $x \in \mathfrak{g}$ and $\alpha \in \mathfrak{g}^*$. On the other hand, the triple $(\mathfrak{g}, \mathrm{ad}^*_{\mathfrak{g}^*}, N)$ is a Nijenhuis representation of the Nijenhuis Lie algebra $(\mathfrak{g}^*, [~,~]_{\mathfrak{g}^*}, S^*)$ if and only if
    \begin{align}\label{indd2}
        (\mathrm{ad}^*_{\mathfrak{g}^*} )_{S^* (\alpha)} N (x) = N \big(   (\mathrm{ad}^*_{\mathfrak{g}^*} )_{S^* (\alpha)} x + (\mathrm{ad}^*_{\mathfrak{g}^*} )_{\alpha} N (x) - N ( (\mathrm{ad}^*_{\mathfrak{g}^*} )_{\alpha} x)   \big),
    \end{align}
    for $x \in \mathfrak{g}$ and $\alpha \in \mathfrak{g}^*$. Hence $( (\mathfrak{g}, [~,~]_\mathfrak{g}, N), (\mathfrak{g}^*, [~,~]_{\mathfrak{g}^*}, S^*), \mathrm{ad}^*_\mathfrak{g}, \mathrm{ad}^*_{\mathfrak{g}^*})$ is a matched pair of Nijenhuis Lie algebras if and only if $((\mathfrak{g} \oplus \mathfrak{g}^*, [~,~]_\Join ), (\mathfrak{g}, [~,~]_\mathfrak{g}), (\mathfrak{g}^*, [~,~]_{\mathfrak{g}^*} )$ is a Manin triple of Lie algebras and the identities (\ref{indd1}), (\ref{indd2}) are hold. These two identities are equivalent to say that $N \oplus S^* : \mathfrak{g} \oplus \mathfrak{g}^* \rightarrow \mathfrak{g} \oplus \mathfrak{g}^*$ is a Nijenhuis operator on the Lie algebra $(\mathfrak{g} \oplus \mathfrak{g}^*, [~,~]_\Join)$. Hence the result follows.
\end{proof}

\medskip

In the following, we aim to consider Nijenhuis Lie bialgebras. Before that, we need to understand Nijenhuis Lie coalgebras which are the dual version of Nijenhuis Lie algebras. First, recall that a {\bf Lie coalgebra} \cite{chari} is a pair $(\mathfrak{g}, \delta)$ of a vector space $\mathfrak{g}$ with a linear map $\delta : \mathfrak{g} \rightarrow \mathfrak{g} \otimes \mathfrak{g}$ that satisfy the following conditions:
\begin{itemize}
    \item[(i)] $\delta$ is co-antisymmetric, i.e. $\delta = - \tau \delta$, where $\tau : \mathfrak{g} \otimes \mathfrak{g} \rightarrow \mathfrak{g} \otimes \mathfrak{g}$ is the flip map,
    \item[(ii)] co-Jacobian identity: for any $x \in \mathfrak{g}$,
    \begin{align*}
        (\mathrm{Id}_{\mathfrak{g}^{\otimes 3}} + \sigma + \sigma^2) (\mathrm{Id}_\mathfrak{g} \otimes \delta) \delta (x) = 0,
    \end{align*}
\end{itemize}
where $\sigma : \mathfrak{g}^{\otimes 3} \rightarrow \mathfrak{g}^{\otimes 3}$ is the map $\sigma (x \otimes y \otimes z) = y \otimes z \otimes x$. Let $(\mathfrak{g}, \delta)$ and $(\mathfrak{g}', \delta')$ be two Lie coalgebras. A {\em homomorphism} of Lie coalgebras from $(\mathfrak{g}, \delta)$ to $(\mathfrak{g}', \delta')$ is a linear map $f : \mathfrak{g} \rightarrow \mathfrak{g}'$ satisfying $\delta' \circ f = (f \otimes f) \circ \delta$. The collection of all Lie coalgebras and homomorphisms between them forms a category.

\begin{defn}
    Let $(\mathfrak{g}, \delta)$ be a Lie coalgebra. A {\bf Nijenhuis operator} of $(\mathfrak{g}, \delta)$ is a linear map $S: \mathfrak{g} \rightarrow \mathfrak{g}$ that satisfies
    \begin{align}\label{nij-liecoalg}
        (S \otimes S) \delta (x) = (S \otimes \mathrm{Id}_\mathfrak{g} + \mathrm{Id}_\mathfrak{g} \otimes S) \delta (S (x)) - \delta (S^2 (x)), \text{ for all } x \in \mathfrak{g}.
    \end{align}
    A Lie coalgebra $(\mathfrak{g}, \delta)$ endowed with a Nijenhuis operator $S$ is said to be a {\bf Nijenhuis Lie coalgebra}. We denote a Nijenhuis Lie coalgebra as above simply by $(\mathfrak{g}, \delta, S).$
\end{defn}

Let $(\mathfrak{g}, \delta, S)$ be a Nijenhuis Lie coalgebra. Then it can be checked that $(\mathfrak{g}^*, [~,~]_{\mathfrak{g}^*}, S^*)$ is a Nijenhuis Lie algebra, where the bracket $[~,~]_{\mathfrak{g}^*} : \mathfrak{g}^* \times \mathfrak{g}^* \rightarrow \mathfrak{g}^*$ is given by
\begin{align*}
    \langle [\alpha, \beta]_{\mathfrak{g}^*}, x \rangle := \langle \delta (x) , \alpha \otimes \beta \rangle, \text{ for } \alpha, \beta \in \mathfrak{g}^*, x \in \mathfrak{g}.
\end{align*}
The converse is not true in general. However, if the underlying vector space $\mathfrak{g}$ is finite-dimensional then $(\mathfrak{g}, \delta, S)$ is a Nijenhuis Lie coalgebra if and only if $(\mathfrak{g}^*, [~,~]_{\mathfrak{g}^*}, S^*)$ is a Nijenhuis Lie algebra.

\medskip

In the following results, we show that an arbitrary Nijenhuis Lie coalgebra gives rise to a hierarchy of Lie coalgebra structures.
         
\begin{prop}
    Let $(\mathfrak{g}, \delta, S)$ be a Nijenhuis Lie coalgebra. Then the vector space $\mathfrak{g}$ can be given a new Lie coalgebra structure with the coproduct $\delta_S: \mathfrak{g} \rightarrow \mathfrak{g} \otimes \mathfrak{g}$ given by
    \begin{align*}
        \delta_S (x) :=  (S \otimes \mathrm{Id}_\mathfrak{g} + \mathrm{Id}_\mathfrak{g} \otimes S) \delta (x) - \delta (S (x)), \text{ for } x \in \mathfrak{g}.
    \end{align*}
    Moreover, $S : \mathfrak{g} \rightarrow \mathfrak{g}$ is a homomorphism of Lie coalgebras from $(\mathfrak{g}, \delta)$ to $(\mathfrak{g}, \delta_S).$
\end{prop}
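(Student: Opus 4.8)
The plan is to deduce the proposition from two facts already available: first, that the linear dual of a Nijenhuis Lie coalgebra is a Nijenhuis Lie algebra (recalled just above the statement, and valid without a finiteness hypothesis in this direction), and second, that the deformed bracket $[~,~]^{N}$ of a Nijenhuis Lie algebra is again a Lie bracket (the construction of $\mathfrak{g}^N$ in Section \ref{sec2}). Concretely, equip $\mathfrak{g}^*$ with the Nijenhuis Lie algebra structure $([~,~]_{\mathfrak{g}^*}, S^*)$, so that $[~,~]_{\mathfrak{g}^*}^{S^*}$ is a genuine Lie bracket on $\mathfrak{g}^*$. The first step I would take is to record the compatibility
\begin{align*}
    \langle [\alpha, \beta]_{\mathfrak{g}^*}^{S^*}, x \rangle = \langle \delta_S (x), \alpha \otimes \beta \rangle, \quad \text{for all } x \in \mathfrak{g},\ \alpha, \beta \in \mathfrak{g}^*,
\end{align*}
which follows by expanding $[\alpha,\beta]_{\mathfrak{g}^*}^{S^*} = [S^*\alpha,\beta]_{\mathfrak{g}^*} + [\alpha, S^*\beta]_{\mathfrak{g}^*} - S^*[\alpha,\beta]_{\mathfrak{g}^*}$ and using the defining relations $\langle [\alpha,\beta]_{\mathfrak{g}^*}, x\rangle = \langle \delta(x), \alpha\otimes\beta\rangle$ and $\langle S^*\gamma, y\rangle = \langle \gamma, S(y)\rangle$. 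Thus $\delta_S$ is exactly the coproduct dual to the deformed bracket $[~,~]_{\mathfrak{g}^*}^{S^*}$.

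Co-antisymmetry of $\delta_S$ I would verify directly: using $\tau \circ (S\otimes\mathrm{Id}_\mathfrak{g}) = (\mathrm{Id}_\mathfrak{g}\otimes S)\circ\tau$, $\tau\circ(\mathrm{Id}_\mathfrak{g}\otimes S) = (S\otimes\mathrm{Id}_\mathfrak{g})\circ\tau$ and $\tau\delta = -\delta$, one gets $\tau\delta_S(x) = -(\mathrm{Id}_\mathfrak{g}\otimes S)\delta(x) - (S\otimes\mathrm{Id}_\mathfrak{g})\delta(x) + \delta(S(x)) = -\delta_S(x)$. For the co-Jacobi identity I would pair the element $(\mathrm{Id}_{\mathfrak{g}^{\otimes 3}} + \sigma + \sigma^2)(\mathrm{Id}_\mathfrak{g}\otimes\delta_S)\delta_S(x) \in \mathfrak{g}^{\otimes 3}$ against an arbitrary $\alpha\otimes\beta\otimes\gamma \in (\mathfrak{g}^*)^{\otimes 3}$; two applications of the compatibility identity above, together with $\langle \sigma(t), \alpha\otimes\beta\otimes\gamma\rangle = \langle t, \gamma\otimes\alpha\otimes\beta\rangle$, convert this pairing into the left-hand side of the Jacobi identity for $[~,~]_{\mathfrak{g}^*}^{S^*}$ on $\alpha,\beta,\gamma$, paired with $x$. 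Since $[~,~]_{\mathfrak{g}^*}^{S^*}$ is a Lie bracket this vanishes for all $\alpha,\beta,\gamma,x$, and since the canonical pairing $\mathfrak{g}^{\otimes 3} \times (\mathfrak{g}^*)^{\otimes 3} \to {\bf k}$ is nondegenerate in the first variable, the co-Jacobi element is zero.

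I expect the only real obstacle to be bookkeeping in the co-Jacobi step: matching the cyclic symmetrization encoded by $\sigma$ with the cyclic sum in the Jacobi identity, and tracking which tensor slot each of $\alpha,\beta,\gamma$ occupies after the various flips inside $\delta_S$. This is mechanical; a reader preferring to avoid dualization could instead expand $(\mathrm{Id}_{\mathfrak{g}^{\otimes 3}} + \sigma + \sigma^2)(\mathrm{Id}_\mathfrak{g}\otimes\delta_S)\delta_S$ directly using co-antisymmetry and the Nijenhuis Lie coalgebra identity (\ref{nij-liecoalg}), in parallel with the computation showing $[N,N]_\mathsf{FN}=0$ for a Nijenhuis operator $N$. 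Finally, the claim that $S : (\mathfrak{g}, \delta) \to (\mathfrak{g}, \delta_S)$ is a homomorphism of Lie coalgebras requires checking $(S\otimes S)\circ\delta = \delta_S \circ S$, and evaluating the right side gives $\delta_S(S(x)) = (S\otimes\mathrm{Id}_\mathfrak{g} + \mathrm{Id}_\mathfrak{g}\otimes S)\delta(S(x)) - \delta(S^2(x))$, which equals $(S\otimes S)\delta(x)$ precisely by (\ref{nij-liecoalg}); so this part needs no extra computation and is the codual of the observation that $N : \mathfrak{g}^N \to \mathfrak{g}$ is a Lie algebra homomorphism for any Nijenhuis operator $N$.
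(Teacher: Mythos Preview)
The paper states this proposition without proof, presumably regarding it as the straightforward dualization of the fact (recalled in Section~\ref{sec2}) that for a Nijenhuis Lie algebra $(\mathfrak{g}, [~,~]_\mathfrak{g}, N)$ the deformed bracket $[~,~]_\mathfrak{g}^N$ is a Lie bracket and $N : \mathfrak{g}^N \to \mathfrak{g}$ is a Lie algebra homomorphism. Your argument carries out exactly this dualization, and your attention to the one nontrivial point---that the pairing $\mathfrak{g}^{\otimes 3} \times (\mathfrak{g}^*)^{\otimes 3} \to {\bf k}$ is nondegenerate in the first variable even without a finite-dimensionality assumption on $\mathfrak{g}$---is well placed; the proof is correct and matches the paper's implicit reasoning.
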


The Lie coalgebra $(\mathfrak{g}, \delta_S)$ constructed in the above proposition is said to be the {\em deformed Lie coalgebra} of the given Nijenhuis Lie coalgebra $(\mathfrak{g}, \delta, S)$.

\begin{prop}
    Let $(\mathfrak{g}, \delta, S)$ be a Nijenhuis Lie coalgebra.
    \begin{itemize}
        \item[(i)] Then for each $k \geq 0$, the map $S^k : \mathfrak{g} \rightarrow \mathfrak{g}$ is also a Nijenhuis operator on the Lie coalgebra $(\mathfrak{g}, \delta).$ In other words, $(\mathfrak{g}, \delta, S^k)$ is a Nijenhuis Lie coalgebra.
        \item[(ii)] For any $k, l \geq 0$, the map $S^l : \mathfrak{g} \rightarrow \mathfrak{g}$ is a Nijenhuis operator on the deformed Lie coalgebra $(\mathfrak{g}, \delta_{S^k})$. That is, $(\mathfrak{g}, \delta_{S^k}, S^l)$ is  a Nijenhuis Lie coalgebra.
        \item[(iii)] Moreover, the deformed Lie coalgebras $(\mathfrak{g}, (\delta_{S^k})_{S^l})$ and $(\mathfrak{g}, \delta_{S^{k+l}})$ are the same.
    \end{itemize}
\end{prop}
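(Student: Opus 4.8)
The plan is to treat this as the Lie-coalgebra dual of the earlier proposition on iterated Nijenhuis operators of a Lie algebra (the result quoted from \cite{koss}). When $\mathfrak{g}$ is finite-dimensional, the cleanest route is dualization: by the equivalence recorded just above, $(\mathfrak{g},\delta,S)$ is a Nijenhuis Lie coalgebra iff $(\mathfrak{g}^*,[~,~]_{\mathfrak{g}^*},S^*)$ is a Nijenhuis Lie algebra, and unwinding the definitions one checks that $(-)^*$ intertwines the relevant constructions — $(S^k)^*=(S^*)^k$, and the Lie bracket dual to the coproduct $\delta_{S^k}$ is exactly the deformed bracket $[~,~]_{\mathfrak{g}^*}^{(S^*)^k}$ — so (i)--(iii) are the images under $(-)^*$ of the three assertions of the earlier proposition applied to $(\mathfrak{g}^*,[~,~]_{\mathfrak{g}^*},S^*)$. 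For general $\mathfrak{g}$ this shortcut is unavailable (the converse implication in that equivalence fails), so I would argue directly, mirroring the computation underlying the algebra statement.

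For the direct argument the key device is a closed form for the iterated deformed coproduct. Write $A:=S\otimes\mathrm{Id}_\mathfrak{g}$ and $B:=\mathrm{Id}_\mathfrak{g}\otimes S$, which are commuting operators on $\mathfrak{g}\otimes\mathfrak{g}$, and set $p_n:=\sum_{i=0}^{n}A^iB^{\,n-i}\in{\bf k}[A,B]$ (with $p_{-1}=0$, $p_0=\mathrm{Id}$). In this notation \eqref{nij-liecoalg} reads $(AB)\circ\delta=(A+B)\circ\delta\circ S-\delta\circ S^2$; replacing $x$ by $S^{m-1}x$ gives the recursion $\delta\circ S^{m+1}=(A+B)\circ\delta\circ S^m-(AB)\circ\delta\circ S^{m-1}$, and an induction on $m$ using only the elementary identity $(A+B)p_{n-1}-(AB)p_{n-2}=p_n$ in ${\bf k}[A,B]$ yields
\[
\delta\circ S^{m}=p_{m-1}\circ(\delta\circ S)-(AB)\circ p_{m-2}\circ\delta,\qquad\text{hence}\qquad\delta_{S^m}=p_m\circ\delta-p_{m-1}\circ(\delta\circ S),
\]
for all $m\ge0$ (the second equality follows from the first, since $\delta_{S^m}=(A^m+B^m)\delta-\delta\circ S^m$ and $A^m+B^m+(AB)p_{m-2}=p_m$).

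With this normal form in hand, all three parts collapse to polynomial identities among the $p_n$. Substituting the closed form into $(S^k\otimes S^k)\delta=\delta_{S^k}\circ S^k$ shows that $S^k$ is a Nijenhuis operator on $(\mathfrak{g},\delta)$ precisely because of the continuant-type identity $p_{k-1}^{2}-p_kp_{k-2}=(AB)^{k-1}$, which gives (i); substituting it into $(\delta_{S^k})_{S^l}=(A^l+B^l)\delta_{S^k}-\delta_{S^k}\circ S^l$ and reducing $\delta\circ S^l,\delta\circ S^{l+1}$ by the normal form turns $(\delta_{S^k})_{S^l}=\delta_{S^{k+l}}$ into two analogous identities (e.g. $(A^l+B^l)p_k+(AB)(p_kp_{l-2}-p_{k-1}p_{l-1})=p_{k+l}$ and its companion for the $\delta\circ S$ coefficient), which gives (iii); the same reduction shows $S$ is a Nijenhuis operator on $\delta_{S^k}$, so $(\mathfrak{g},\delta_{S^k},S)$ is itself a Nijenhuis Lie coalgebra, and (ii) is then just (i) applied to this coalgebra. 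Each $p_n$-identity is classical and has a one-line verification from $p_n=(A^{n+1}-B^{n+1})/(A-B)$. Finally, the ``is a Lie coalgebra'' clauses in (i)--(iii) require no extra work: they follow from the preceding proposition on deformed Lie coalgebras once the relevant map is known to be a Nijenhuis operator.

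The conceptual content here is small; the obstacle is organizational. First, one must spot the closed form $\delta_{S^m}=p_m\circ\delta-p_{m-1}\circ(\delta\circ S)$: without it the naive inductions run in circles, because deforming by $S^2$ is not the same as deforming by $S$ twice unless one already has the composition law (iii). Second, one must keep the dependency order straight — establish the normal form (a pure consequence of \eqref{nij-liecoalg}), then (i), then (iii), then (ii) — and isolate the handful of $p_n$-identities as a short combinatorial lemma rather than rederiving them inside each part, so that the write-up stays compact.
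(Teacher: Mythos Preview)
The paper states this proposition without proof, presenting it as the Lie-coalgebra dual of the earlier proposition on iterated Nijenhuis operators on a Lie algebra (quoted from \cite{koss}) and leaving the verification to the reader. Your proposal therefore supplies strictly more than the paper does, and it is correct.

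Your dualization argument in the finite-dimensional case is exactly the implicit reduction the paper has in mind, and you correctly identify the two compatibilities that make it work: $(S^k)^*=(S^*)^k$ and the fact that the bracket on $\mathfrak{g}^*$ dual to $\delta_{S^k}$ is the deformed bracket $[~,~]_{\mathfrak{g}^*}^{(S^*)^k}$. Your direct argument for the general case is also sound. The closed form $\delta\circ S^{m}=p_{m-1}(\delta\circ S)-(AB)p_{m-2}\delta$ (hence $\delta_{S^m}=p_m\delta-p_{m-1}(\delta\circ S)$) is the right organizing device; it follows from \eqref{nij-liecoalg} alone via the linear recursion $\delta\circ S^{m+1}=(A+B)\delta\circ S^m-(AB)\delta\circ S^{m-1}$, and once it is in hand all three parts do reduce to the continuant identities you list, each of which is immediate from $p_n=(A^{n+1}-B^{n+1})/(A-B)$. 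The dependency order you propose---closed form, then (i), then (iii), then (ii) by showing $S$ is Nijenhuis on $(\mathfrak{g},\delta_{S^k})$ and invoking (i) for that coalgebra---is clean and avoids any circularity. One small remark: your formula for $\delta\circ S^m$ literally needs $m\ge 1$ (it would otherwise call on $p_{-2}$), but the derived formula $\delta_{S^m}=p_m\delta-p_{m-1}(\delta\circ S)$ is valid for all $m\ge 0$ with your convention $p_{-1}=0$, which is all you use downstream.
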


Let $(\mathfrak{g}, \delta, S)$ be a Nijenhuis Lie coalgebra. Consider the dual Nijenhuis Lie algebra $(\mathfrak{g}^*, [~,~]_{\mathfrak{g}^*}, S^*)$. Note that, for a linear map $N: \mathfrak{g} \rightarrow \mathfrak{g}$, the map $N^*  : \mathfrak{g}^* \rightarrow \mathfrak{g}^*$ is admissible to the Nijenhuis Lie algebra $(\mathfrak{g}^*, [~,~]_{\mathfrak{g}^*}, S^*)$ if 
\begin{align*}
N^* [S^* (\alpha), \beta]_{\mathfrak{g}^*} + [\alpha, (N^*)^2 (\beta)]_{\mathfrak{g}^*} = [S^* (\alpha ), N^* (\beta)]_{\mathfrak{g}^*} + N^* [\alpha, N^* (\beta)]_{\mathfrak{g}^*}, \text{ for } \alpha, \beta \in \mathfrak{g}^*.
\end{align*}
This condition can be equivalently written as 
\begin{align}\label{adm-sec}
    (N \otimes S) \delta (x) + (N \otimes \mathrm{Id} - \mathrm{Id} \otimes S) \delta (N (x)) - (N^2 \otimes \mathrm{Id}) \delta (x) = 0, \text{ for any } x \in \mathfrak{g}.
\end{align}

\medskip

We are now ready to introduce the notion of a Nijenhuis Lie bialgebra. First, we recall the following \cite{chari}.

\begin{defn}
    A {\bf Lie bialgebra} is a triple $(\mathfrak{g}, [~,~]_\mathfrak{g}, \delta)$ consisting of a vector space $\mathfrak{g}$ endowed with a Lie algebra structure $(\mathfrak{g}, [~,~]_\mathfrak{g})$ and a Lie coalgebra structure $(\mathfrak{g}, \delta)$ satisfying the following compatibility:
    \begin{align}\label{lie-bialg-comp}
        \delta ( [x, y]_\mathfrak{g}) = (\mathrm{ad}_x \otimes \mathrm{Id}_\mathfrak{g} + \mathrm{Id}_\mathfrak{g} \otimes \mathrm{ad}_x) \delta (y) - (\mathrm{ad}_y \otimes \mathrm{Id}_\mathfrak{g} + \mathrm{Id}_\mathfrak{g} \otimes \mathrm{ad}_y) \delta (x), \text{ for all } x, y \in \mathfrak{g}.
    \end{align}
\end{defn}

\begin{defn}\label{defn-nij-lie-bi}
    A {\bf Nijenhuis Lie bialgebra} is a tuple $(\mathfrak{g}, [~,~]_\mathfrak{g}, N, \delta, S)$ consisting of a Nijenhuis Lie algebra $(\mathfrak{g}, [~,~]_\mathfrak{g}, N)$ and a Nijenhuis Lie coalgebra $(\mathfrak{g}, \delta, S)$ both defined on a same vector space $\mathfrak{g}$ such that the following compatibility conditions are hold:
    \begin{itemize}
        \item[(i)] $(\mathfrak{g}, [~,~]_\mathfrak{g}, \delta)$ is a Lie bialgebra, i.e. the identity (\ref{lie-bialg-comp}) holds,
        \item[(ii)] $S : \mathfrak{g} \rightarrow \mathfrak{g}$ is admissible to the Nijenhuis Lie algebra $(\mathfrak{g}, [~,~]_\mathfrak{g}, N)$, i.e. the identity (\ref{adm-first}) holds,
        \item[(iii)] $N^* : \mathfrak{g}^* \rightarrow \mathfrak{g}^*$ is admissible to the Nijenhuis Lie algebra $(\mathfrak{g}^*, [~,~]_{\mathfrak{g}^*}, S^*)$, i.e. the identity (\ref{adm-sec}) holds.
    \end{itemize}
\end{defn}

In the following result, we show that Nijenhuis Lie algebras can be equivalently characterized by matched pairs of Nijenhuis Lie algebras. More precisely, we have the following result.

\begin{prop}\label{last-prop2}
    Let $(\mathfrak{g}, [~,~]_\mathfrak{g}, N)$ be a Nijenhuis Lie algebra and $(\mathfrak{g}, \delta, S)$ be a Nijenhuis Lie coalgebra both defined on a same finite-dimensional vector space $\mathfrak{g}$. Then $(\mathfrak{g}, [~,~]_\mathfrak{g}, N, \delta, S)$ is a Nijenhuis Lie bialgebra if and only if $ ( (\mathfrak{g}, [~,~]_\mathfrak{g}, N), (\mathfrak{g}^*, [~,~]_{\mathfrak{g}^*}, S^*), \mathrm{ad}^*_\mathfrak{g}, \mathrm{ad}^*_{\mathfrak{g}^*})$ is a matched pair of Nijenhuis Lie algebras.
\end{prop}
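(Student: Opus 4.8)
The plan is to reduce everything to the characterization of Lie bialgebras via Manin triples (or, equivalently, via matched pairs with the coadjoint actions) already recalled in the excerpt, and then to isolate exactly the extra conditions that the Nijenhuis data contribute. First I would recall the classical fact \cite{chari}: $(\mathfrak{g}, [~,~]_\mathfrak{g}, \delta)$ is a Lie bialgebra if and only if $((\mathfrak{g}, [~,~]_\mathfrak{g}), (\mathfrak{g}^*, [~,~]_{\mathfrak{g}^*}), \mathrm{ad}^*_\mathfrak{g}, \mathrm{ad}^*_{\mathfrak{g}^*})$ is a matched pair of Lie algebras, where $[~,~]_{\mathfrak{g}^*}$ is the bracket dual to $\delta$ and finite-dimensionality of $\mathfrak{g}$ is used to identify $\mathfrak{g}^{**}$ with $\mathfrak{g}$. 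This settles the ``Lie algebra layer'' of both sides, so the content of the proposition is entirely about the three Nijenhuis compatibility conditions versus the two Nijenhuis-representation conditions in the definition of a matched pair of Nijenhuis Lie algebras.

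Next I would observe that, by hypothesis, $(\mathfrak{g}, [~,~]_\mathfrak{g}, N)$ is already a Nijenhuis Lie algebra and (via the finite-dimensional equivalence established right after the definition of a Nijenhuis Lie coalgebra) $(\mathfrak{g}^*, [~,~]_{\mathfrak{g}^*}, S^*)$ is already a Nijenhuis Lie algebra. Therefore, to upgrade the matched pair of Lie algebras $((\mathfrak{g}, [~,~]_\mathfrak{g}), (\mathfrak{g}^*, [~,~]_{\mathfrak{g}^*}), \mathrm{ad}^*_\mathfrak{g}, \mathrm{ad}^*_{\mathfrak{g}^*})$ to a matched pair of Nijenhuis Lie algebras, what remains is precisely:
\begin{itemize}
    \item[(a)] $(\mathfrak{g}^*, \mathrm{ad}^*_\mathfrak{g}, S^*)$ is a Nijenhuis representation of $(\mathfrak{g}, [~,~]_\mathfrak{g}, N)$;
    \item[(b)] $(\mathfrak{g}, \mathrm{ad}^*_{\mathfrak{g}^*}, N)$ is a Nijenhuis representation of $(\mathfrak{g}^*, [~,~]_{\mathfrak{g}^*}, S^*)$.
\end{itemize}
Then I would match these two with the conditions in Definition \ref{defn-nij-lie-bi}. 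Condition (a) is, by the discussion preceding the ``admissible'' definition, exactly the statement that $S$ is admissible to $(\mathfrak{g}, [~,~]_\mathfrak{g}, N)$, i.e.\ condition (ii) of Definition \ref{defn-nij-lie-bi} (identity (\ref{adm-first})); this is essentially the computation (\ref{indd1}) of Proposition \ref{last-prop1}, dualized. Symmetrically, condition (b) applied to the Nijenhuis Lie algebra $(\mathfrak{g}^*, [~,~]_{\mathfrak{g}^*}, S^*)$ with the operator $N = (N^*)^*$ says exactly that $N^*$ is admissible to $(\mathfrak{g}^*, [~,~]_{\mathfrak{g}^*}, S^*)$; translating this dual-space identity back through the coproduct $\delta$ yields precisely (\ref{adm-sec}), i.e.\ condition (iii) of Definition \ref{defn-nij-lie-bi}. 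This is the computation (\ref{indd2}) of Proposition \ref{last-prop1} read in the other direction.

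Putting the pieces together: $(\mathfrak{g}, [~,~]_\mathfrak{g}, N, \delta, S)$ is a Nijenhuis Lie bialgebra $\iff$ (i) the Lie bialgebra compatibility (\ref{lie-bialg-comp}) holds, (ii) $S$ is admissible to $(\mathfrak{g}, [~,~]_\mathfrak{g}, N)$, (iii) $N^*$ is admissible to $(\mathfrak{g}^*, [~,~]_{\mathfrak{g}^*}, S^*)$ $\iff$ $((\mathfrak{g}, [~,~]_\mathfrak{g}), (\mathfrak{g}^*, [~,~]_{\mathfrak{g}^*}), \mathrm{ad}^*_\mathfrak{g}, \mathrm{ad}^*_{\mathfrak{g}^*})$ is a matched pair of Lie algebras, $(\mathfrak{g}^*, \mathrm{ad}^*_\mathfrak{g}, S^*)$ is a Nijenhuis representation of $(\mathfrak{g}, [~,~]_\mathfrak{g}, N)$, and $(\mathfrak{g}, \mathrm{ad}^*_{\mathfrak{g}^*}, N)$ is a Nijenhuis representation of $(\mathfrak{g}^*, [~,~]_{\mathfrak{g}^*}, S^*)$ $\iff$ $((\mathfrak{g}, [~,~]_\mathfrak{g}, N), (\mathfrak{g}^*, [~,~]_{\mathfrak{g}^*}, S^*), \mathrm{ad}^*_\mathfrak{g}, \mathrm{ad}^*_{\mathfrak{g}^*})$ is a matched pair of Nijenhuis Lie algebras. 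The only real bookkeeping — and the step I expect to be the main obstacle — is the careful dualization in (iii): one must verify that the ``admissibility of $N^*$'' identity in $\mathfrak{g}^*$, when paired against an arbitrary $x\in\mathfrak{g}$ and unwound using $\langle[\alpha,\beta]_{\mathfrak{g}^*},x\rangle=\langle\delta(x),\alpha\otimes\beta\rangle$ and the definition of $\mathrm{ad}^*_{\mathfrak{g}^*}$, collapses exactly to (\ref{adm-sec}); the sign and flip-map conventions (co-antisymmetry $\delta=-\tau\delta$) must be tracked, but this is routine and is already implicitly done in Proposition \ref{last-prop1}. Finite-dimensionality is used twice: once for the Lie-bialgebra $\leftrightarrow$ Manin/matched-pair equivalence, and once to pass between Nijenhuis Lie coalgebra structures on $\mathfrak{g}$ and Nijenhuis Lie algebra structures on $\mathfrak{g}^*$.
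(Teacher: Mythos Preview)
Your proposal is correct and follows essentially the same approach as the paper: reduce the Lie-bialgebra layer to the classical matched-pair equivalence, then identify condition (ii) (admissibility of $S$) with $(\mathfrak{g}^*, \mathrm{ad}^*_\mathfrak{g}, S^*)$ being a Nijenhuis representation and condition (iii) (admissibility of $N^*$) with $(\mathfrak{g}, \mathrm{ad}^*_{\mathfrak{g}^*}, N)$ being a Nijenhuis representation. Your discussion of the dualization bookkeeping and the role of finite-dimensionality is more explicit than the paper's, but the argument is the same.
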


\begin{proof}
    It is well-known that $(\mathfrak{g}, [~,~]_\mathfrak{g}, \delta)$ is a Lie bialgebra if and only if $( (\mathfrak{g}, [~,~]_\mathfrak{g}), (\mathfrak{g}^*, [~,~]_{\mathfrak{g}^*}), \mathrm{ad}^*_\mathfrak{g}, \mathrm{ad}^*_{\mathfrak{g}^*})$ is a matched pair of Lie algebras \cite{chari}. Next, the linear map $S : \mathfrak{g} \rightarrow \mathfrak{g}$ is admissible to the Nijenhuis Lie algebra $(\mathfrak{g}, [~,~]_\mathfrak{g}, N)$, i.e. the identity (\ref{adm-first}) holds if and only if
    \begin{align*}
        (\mathrm{ad}^*_\mathfrak{g})_{N(x)} S^* (\alpha) = S^* \big(  (\mathrm{ad}^*_\mathfrak{g})_{N(x)} \alpha + (\mathrm{ad}^*_\mathfrak{g})_x S^* (\alpha) - S^* ( (\mathrm{ad}^*_\mathfrak{g})_x \alpha )    \big),
    \end{align*}
    for all $x \in \mathfrak{g}$ and $\alpha \in \mathfrak{g}^*$. This is equivalent that the triple $(\mathfrak{g}^*, \mathrm{ad}^*_\mathfrak{g}, S^*)$ is a Nijenhuis representation of the Nijenhuis Lie algebra $(\mathfrak{g}, [~,~]_\mathfrak{g}, N)$. Similarly, the map $N^* : \mathfrak{g}^* \rightarrow \mathfrak{g}^*$ is admissible to the Nijenhuis Lie algebra $(\mathfrak{g}^*, [~,~]_{\mathfrak{g}^*}, S^*)$ if and only if the triple $(\mathfrak{g}, \mathrm{ad}^*_{\mathfrak{g}^*}, N)$ is a Nijenhuis representation of the Nijenhuis Lie algebra $(\mathfrak{g}^*, [~,~]_{\mathfrak{g}^*}, S^*)$. Hence the result follows.
\end{proof}

Combining Propositions \ref{last-prop1} and \ref{last-prop2}, we get the following equivalent characterizations of Nijenhuis Lie bialgebras.

\begin{thm}
    Let $(\mathfrak{g}, [~,~]_\mathfrak{g}, N)$ be a Nijenhuis Lie algebra and $(\mathfrak{g}, \delta, S)$ be a Nijenhuis Lie coalgebra both defined on a finite-dimensional vector space $\mathfrak{g}$. Then the following are equivalent:
    \begin{itemize}
        \item[(i)] $(\mathfrak{g}, [~,~]_\mathfrak{g}, N, \delta, S)$ is a Nijenhuis Lie bialgebra,
        \item[(ii)] $ ( (\mathfrak{g}, [~,~]_\mathfrak{g}, N), (\mathfrak{g}^*, [~,~]_{\mathfrak{g}^*}, S^*), \mathrm{ad}^*_\mathfrak{g}, \mathrm{ad}^*_{\mathfrak{g}^*})$ is a matched pair of Nijenhuis Lie algebras,
        \item[(iii)] $((\mathfrak{g} \oplus \mathfrak{g}^*, [~,~]_\Join, N \oplus S^*), (\mathfrak{g}, [~,~]_\mathfrak{g}, N), (\mathfrak{g}^*, [~,~]_{\mathfrak{g}^*}, S^*)  )$ is a Manin triple of Nijenhuis Lie algebras.
    \end{itemize}
\end{thm}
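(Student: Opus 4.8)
The plan is to obtain the three-way equivalence by simply chaining the two preceding propositions, after first checking that their hypotheses refer to the same ambient data. Since $\mathfrak{g}$ is finite-dimensional, the Nijenhuis Lie coalgebra $(\mathfrak{g}, \delta, S)$ is equivalent to the Nijenhuis Lie algebra $(\mathfrak{g}^*, [~,~]_{\mathfrak{g}^*}, S^*)$ on the dual, where $\langle [\alpha, \beta]_{\mathfrak{g}^*}, x \rangle = \langle \delta(x), \alpha \otimes \beta \rangle$; this is precisely the finite-dimensional biconditional recorded earlier for Nijenhuis Lie coalgebras and their duals. Hence the data appearing in (i), (ii), (iii) are all built from the single pair of structures $((\mathfrak{g}, [~,~]_\mathfrak{g}, N), (\mathfrak{g}^*, [~,~]_{\mathfrak{g}^*}, S^*))$, and both Proposition \ref{last-prop1} and Proposition \ref{last-prop2} apply.

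Next I would invoke Proposition \ref{last-prop2} to get (i) $\Leftrightarrow$ (ii): it decomposes the Nijenhuis Lie bialgebra conditions into the classical Lie bialgebra compatibility (\ref{lie-bialg-comp}) — equivalent to $((\mathfrak{g}, [~,~]_\mathfrak{g}), (\mathfrak{g}^*, [~,~]_{\mathfrak{g}^*}), \mathrm{ad}^*_\mathfrak{g}, \mathrm{ad}^*_{\mathfrak{g}^*})$ being a matched pair of Lie algebras — together with the admissibility conditions (\ref{adm-first}) and (\ref{adm-sec}), which are rephrased respectively as $(\mathfrak{g}^*, \mathrm{ad}^*_\mathfrak{g}, S^*)$ being a Nijenhuis representation of $(\mathfrak{g}, [~,~]_\mathfrak{g}, N)$ and $(\mathfrak{g}, \mathrm{ad}^*_{\mathfrak{g}^*}, N)$ being a Nijenhuis representation of $(\mathfrak{g}^*, [~,~]_{\mathfrak{g}^*}, S^*)$; these three statements together are exactly the definition of a matched pair of Nijenhuis Lie algebras. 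Then I would invoke Proposition \ref{last-prop1} to get (ii) $\Leftrightarrow$ (iii): the classical matched-pair $\Leftrightarrow$ Manin-triple correspondence handles the underlying Lie structures (with the canonical form $\mathcal{B}$ ad-invariant on the bicrossed product iff the matched-pair axioms hold), the restrictions of $[~,~]_\Join$ to $\mathfrak{g}$ and $\mathfrak{g}^*$ return the original brackets, and the two Nijenhuis-representation conditions are precisely what forces $N \oplus S^*$ to be a Nijenhuis operator on $(\mathfrak{g} \oplus \mathfrak{g}^*, [~,~]_\Join)$. Concatenating the two biconditionals yields (i) $\Leftrightarrow$ (ii) $\Leftrightarrow$ (iii).

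There is no real obstacle: all the substantive computations — translating (\ref{adm-first}) and (\ref{adm-sec}) into dual-representation form, and the bicrossed-product/Manin-triple bookkeeping, including the identity $N_{\mathfrak{g} \oplus \mathfrak{g}^*} = N \oplus S^*$ forced by the Nijenhuis Lie subalgebra requirement — have already been carried out inside Propositions \ref{last-prop1} and \ref{last-prop2}. The one point I would be careful to spell out is the finite-dimensionality step, which guarantees that the dual Nijenhuis Lie algebra genuinely encodes the Nijenhuis Lie coalgebra, so that the three conditions really are conditions on one and the same object; after that, the theorem is immediate.
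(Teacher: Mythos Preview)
Your proposal is correct and follows exactly the paper's approach: the theorem is stated as an immediate consequence of combining Propositions~\ref{last-prop1} and~\ref{last-prop2}, with the finite-dimensionality ensuring the Nijenhuis Lie coalgebra $(\mathfrak{g}, \delta, S)$ is equivalent to the dual Nijenhuis Lie algebra $(\mathfrak{g}^*, [~,~]_{\mathfrak{g}^*}, S^*)$.
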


\begin{remark}
    In \cite{ravanpak} Ravanpak introduced the notion of an NL bialgebra as the algebraic analogue of Poisson-Nijenhuis structures. More precisely, an NL bialgebra is a quadruple $(\mathfrak{g}, [~,~]_\mathfrak{g}, \delta, N)$ consisting of a Lie bialgebra $(\mathfrak{g}, [~,~]_\mathfrak{g}, \delta)$ endowed with a linear map $N : \mathfrak{g} \rightarrow \mathfrak{g}$ such that $N$ is a Nijenhuis operator on both the Lie algebra $(\mathfrak{g}, [~,~]_\mathfrak{g})$ and the Lie coalgebra $(\mathfrak{g}, \delta)$ satisfying some compatibility conditions. It turns out that if $(\mathfrak{g}, [~,~]_\mathfrak{g}, \delta, N)$ is an NL bialgebra then the tuple $(\mathfrak{g}, [~,~]_\mathfrak{g}, N, \delta, N)$ is a Nijenhuis Lie bialgebra in the sense of Definition \ref{defn-nij-lie-bi}.  Thus, our notion of a Nijenhuis Lie bialgebra is more general than NL bialgebra considered in \cite{ravanpak}.
\end{remark}

\subsection{Coboundary Nijenhuis Lie bialgebras and admissible CYBE} In this subsection, we consider a particular class of Nijenhuis Lie bialgebras, called {\em coboundary} Nijenhuis Lie bialgebras. In particular, we introduce the admissible classical Yang-Baxter equation (admissible CYBE) whose antisymmetric solutions can be used to construct Nijenhuis Lie bialgebras. In the end, we define relative Rota-Baxter operators or $\mathcal{O}$-operators on Nijenhuis Lie algebras that yield antisymmetric solutions of the admissible CYBE, and hence produce Nijenhuis Lie bialgebras.

\begin{defn}
    Let $(\mathfrak{g}, [~, ~]_\mathfrak{g}, N, \delta, S)$ be a Nijenhuis Lie bialgebra. It is said to be {\bf coboundary} if the underlying Lie bialgebra is coboundary \cite{chari}, i.e. there exists an element $r \in \mathfrak{g} \otimes \mathfrak{g}$ such that
    \begin{align}\label{coboun}
        \delta (x) = \delta_r (x) := ( (\mathrm{ad}_\mathfrak{g})_x \otimes \mathrm{Id}_\mathfrak{g} + \mathrm{Id}_\mathfrak{g} \otimes (\mathrm{ad}_\mathfrak{g})_x ) r, \text{ for all } x \in \mathfrak{g}.
    \end{align}
\end{defn}

\medskip

Let $(\mathfrak{g}, [~,~]_\mathfrak{g})$ be a Lie algebra and $r = \sum_i p_i \otimes q_i$ be any element of $\mathfrak{g} \otimes \mathfrak{g}$. We define a map $\delta : \mathfrak{g} \rightarrow \mathfrak{g} \otimes \mathfrak{g}$ by the equation (\ref{coboun}). Then it is easy to see that the map $\delta$ satisfies the condition (\ref{lie-bialg-comp}). Further, it is well-known that $(\mathfrak{g}, \delta)$ is a Lie coalgebra which in turn implies that $(\mathfrak{g}, [~,~]_\mathfrak{g}, \delta)$ is a Lie bialgebra if and only if for all $x \in \mathfrak{g}$,
\begin{align}
     &( (\mathrm{ad}_\mathfrak{g})_x \otimes \mathrm{Id}_\mathfrak{g} + \mathrm{Id}_\mathfrak{g} \otimes (\mathrm{ad}_\mathfrak{g})_x ) (r + \tau (r)) = 0, \label{ad-inf1}\\
     \big(  (\mathrm{ad}_\mathfrak{g})_x \otimes \mathrm{Id}_\mathfrak{g} \otimes \mathrm{Id}_\mathfrak{g} ~ \! + & ~ \! \mathrm{Id}_\mathfrak{g} \otimes (\mathrm{ad}_\mathfrak{g})_x  \otimes \mathrm{Id}_\mathfrak{g} ~ \! + ~ \!  \mathrm{Id}_\mathfrak{g}  \otimes  \mathrm{Id}_\mathfrak{g}  \otimes   (\mathrm{ad}_\mathfrak{g})_x   \big)  ( \llbracket r_{12}, r_{13} \rrbracket_\mathfrak{g} + \llbracket r_{12}, r_{23} \rrbracket_\mathfrak{g} + \llbracket r_{13}, r_{23} \rrbracket_\mathfrak{g} )= 0, \label{ad-inf2}
\end{align}
where
\begin{align*}
 \llbracket r_{12}, r_{13} \rrbracket_\mathfrak{g} :=~&  \sum_{i, j} [p_i, p_j ]_\mathfrak{g} \otimes q_i \otimes q_j, \qquad 
    \llbracket r_{12}, r_{23} \rrbracket_\mathfrak{g} := \sum_{i, j} p_i \otimes [q_i, p_j]_\mathfrak{g} \otimes q_j \\
 &\text{ and } \quad     \llbracket r_{13}, r_{23} \rrbracket_\mathfrak{g} :=  \sum_{i, j} p_i \otimes p_j \otimes [q_i, q_j ]_\mathfrak{g}.
\end{align*}

\begin{thm}
    Let $(\mathfrak{g}, [~,~]_\mathfrak{g}, N)$ be a Nijenhuis Lie algebra and $S : \mathfrak{g} \rightarrow \mathfrak{g}$ be a linear map admissible to the Nijenhuis Lie algebra $(\mathfrak{g}, [~,~]_\mathfrak{g}, N)$. For any element $r \in \mathfrak{g} \otimes \mathfrak{g}$, define a map $\delta : \mathfrak{g} \rightarrow \mathfrak{g} \otimes \mathfrak{g}$ by the equation (\ref{coboun}). Then $(\mathfrak{g}, [~,~]_\mathfrak{g}, N, \delta, S)$ is a Nijenhuis Lie bialgebra if and only if (\ref{ad-inf1}), (\ref{ad-inf2}) and for all $x \in \mathfrak{g}$, the following conditions hold:
    \begin{align}\label{three-star}
       & \big( \mathrm{Id}_\mathfrak{g} \otimes S(\mathrm{ad}_\mathfrak{g})_x - \mathrm{Id}_\mathfrak{g} \otimes (\mathrm{ad}_\mathfrak{g} )_{S (x)} \big) (S \otimes \mathrm{Id}_\mathfrak{g} - \mathrm{Id}_\mathfrak{g} \otimes N) (r)  \\
        & \qquad  = \big( S (\mathrm{ad}_\mathfrak{g})_x \otimes \mathrm{Id}_\mathfrak{g} - (\mathrm{ad}_\mathfrak{g} )_{S (x)} \otimes \mathrm{Id}_\mathfrak{g} \big) (N \otimes \mathrm{Id}_\mathfrak{g} - \mathrm{Id}_\mathfrak{g} \otimes S) (r), \nonumber
    \end{align}
    \begin{align}\label{four-star}
        \big(  \mathrm{Id}_\mathfrak{g} \otimes (\mathrm{ad}_\mathfrak{g})_{N (x)} +~& (\mathrm{ad}_\mathfrak{g})_{N (x)} \otimes \mathrm{Id}_\mathfrak{g} + \mathrm{Id}_\mathfrak{g} \otimes S (\mathrm{ad}_\mathfrak{g})_x - N (\mathrm{ad}_\mathfrak{g})_x \otimes \mathrm{Id}_\mathfrak{g} - N \otimes (\mathrm{ad}_\mathfrak{g})_x   \big) ( N \otimes \mathrm{Id}_\mathfrak{g} - \mathrm{Id}_\mathfrak{g} \otimes S )(r) \\
       & = ( \mathrm{Id}_\mathfrak{g} \otimes (\mathrm{ad}_\mathfrak{g})_x) ( \mathrm{Id}_\mathfrak{g} \otimes S) ( N \otimes \mathrm{Id}_\mathfrak{g} - \mathrm{Id}_\mathfrak{g} \otimes S ) (r) = 0.
    \end{align}
\end{thm}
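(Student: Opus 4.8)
The plan is to apply Definition~\ref{defn-nij-lie-bi} directly to the tuple $(\mathfrak{g},[~,~]_\mathfrak{g},N,\delta_r,S)$, where $\delta_r$ is the coproduct defined by (\ref{coboun}), and to translate each defining requirement into an identity on $r$. As noted just before the statement, a coproduct of the form (\ref{coboun}) automatically satisfies the Lie bialgebra compatibility (\ref{lie-bialg-comp}), and $(\mathfrak{g},\delta_r)$ is a Lie coalgebra --- equivalently $(\mathfrak{g},[~,~]_\mathfrak{g},\delta_r)$ is a Lie bialgebra --- precisely when (\ref{ad-inf1}) and (\ref{ad-inf2}) hold. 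Granting these and the standing hypothesis (\ref{adm-first}) (which is exactly condition (ii) of Definition~\ref{defn-nij-lie-bi}), the only remaining requirements in Definition~\ref{defn-nij-lie-bi} are that $S$ be a Nijenhuis operator on the Lie coalgebra $(\mathfrak{g},\delta_r)$, i.e.\ that (\ref{nij-liecoalg}) hold for $\delta_r$ (so that $(\mathfrak{g},\delta_r,S)$ is a Nijenhuis Lie coalgebra), and that $N^*$ be admissible to $(\mathfrak{g}^*,[~,~]_{\mathfrak{g}^*},S^*)$, i.e.\ that (\ref{adm-sec}) hold for $\delta_r$. Thus it suffices to prove that, in the presence of (\ref{ad-inf1}), (\ref{ad-inf2}) and (\ref{adm-first}), the identity (\ref{nij-liecoalg}) for $\delta_r$ is equivalent to (\ref{three-star}), and the identity (\ref{adm-sec}) for $\delta_r$ is equivalent to (\ref{four-star}).

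For the first equivalence, I would substitute $\delta_r(x)=((\mathrm{ad}_\mathfrak{g})_x\otimes\mathrm{Id}_\mathfrak{g}+\mathrm{Id}_\mathfrak{g}\otimes(\mathrm{ad}_\mathfrak{g})_x)(r)$ into the three terms of (\ref{nij-liecoalg}) and expand by bilinearity. Writing $A_x:=S\circ(\mathrm{ad}_\mathfrak{g})_x-(\mathrm{ad}_\mathfrak{g})_{S(x)}$, a short rearrangement rewrites (\ref{nij-liecoalg}) for $\delta_r$ as $(A_x\otimes S+S\otimes A_x)(r)=(A_{S(x)}\otimes\mathrm{Id}_\mathfrak{g}+\mathrm{Id}_\mathfrak{g}\otimes A_{S(x)})(r)$ for all $x$. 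The key point is that the admissibility hypothesis (\ref{adm-first}), read as an operator identity in its second argument and with the two arguments interchanged, is precisely the statement $A_x\circ N=A_{S(x)}$; substituting this, (\ref{nij-liecoalg}) becomes $(A_x\otimes S+S\otimes A_x)(r)=(A_xN\otimes\mathrm{Id}_\mathfrak{g}+\mathrm{Id}_\mathfrak{g}\otimes A_xN)(r)$, which is exactly (\ref{three-star}) once the two sides are refactored as $(\mathrm{Id}_\mathfrak{g}\otimes A_x)(S\otimes\mathrm{Id}_\mathfrak{g}-\mathrm{Id}_\mathfrak{g}\otimes N)(r)$ and $(A_x\otimes\mathrm{Id}_\mathfrak{g})(N\otimes\mathrm{Id}_\mathfrak{g}-\mathrm{Id}_\mathfrak{g}\otimes S)(r)$.

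For the second equivalence, I would likewise substitute $\delta_r$ into the three terms of (\ref{adm-sec}), expanding into a sum of tensorial terms $(P\otimes Q)(r)$ with $P,Q$ built from $(\mathrm{ad}_\mathfrak{g})_x$, $(\mathrm{ad}_\mathfrak{g})_{N(x)}$, $N$, $N^2$, $S$, $\mathrm{Id}_\mathfrak{g}$. Setting $C_x:=(\mathrm{ad}_\mathfrak{g})_{N(x)}-N\circ(\mathrm{ad}_\mathfrak{g})_x$, the Nijenhuis identity for $N$ gives $C_x\circ N=N\circ C_x$, which collapses the $\otimes\,\mathrm{Id}_\mathfrak{g}$-terms, while the admissibility hypothesis (\ref{adm-first}), in the operator form $S\circ(\mathrm{ad}_\mathfrak{g})_{N(x)}+(\mathrm{ad}_\mathfrak{g})_x\circ S^2=(\mathrm{ad}_\mathfrak{g})_{N(x)}\circ S+S\circ(\mathrm{ad}_\mathfrak{g})_x\circ S$, rewrites the $\mathrm{Id}_\mathfrak{g}\otimes$-terms. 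Regrouping the result around the common factor $(N\otimes\mathrm{Id}_\mathfrak{g}-\mathrm{Id}_\mathfrak{g}\otimes S)(r)$ and isolating the piece $(\mathrm{Id}_\mathfrak{g}\otimes(\mathrm{ad}_\mathfrak{g})_x)(\mathrm{Id}_\mathfrak{g}\otimes S)(N\otimes\mathrm{Id}_\mathfrak{g}-\mathrm{Id}_\mathfrak{g}\otimes S)(r)$ puts (\ref{adm-sec}) for $\delta_r$ exactly in the shape of the first identity appearing in (\ref{four-star}); the vanishing of the isolated piece --- the second identity packaged in (\ref{four-star}) --- is then deduced from (\ref{adm-sec}) together with the already-established Nijenhuis-coalgebra condition (\ref{three-star}), the co-Jacobi identity (\ref{ad-inf2}) and the co-antisymmetry (\ref{ad-inf1}). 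Assembling the three parts yields the stated biconditional.

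The main obstacle is the bulk of the bookkeeping rather than any conceptual difficulty: after the substitution, each of (\ref{nij-liecoalg}) and (\ref{adm-sec}) expands into roughly eight terms of the form $(P\otimes Q)(r)$, and one must group them correctly, invoke the operator forms of (\ref{adm-first}) and of the Nijenhuis identity for $N$ at exactly the right places, use (\ref{ad-inf1}) to trade a flip $\tau$ where the antisymmetry of $\delta_r$ is needed, and then recognise the outcome inside the factored right-hand sides of (\ref{three-star}) and (\ref{four-star}). The one genuinely delicate point is that (\ref{four-star}) is a conjunction of two identities rather than a single one, so part of the work is verifying that the second of them is forced by (\ref{adm-sec}) in the presence of the remaining imposed conditions, and is not an independent constraint.
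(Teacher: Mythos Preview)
Your overall strategy coincides with the paper's: substitute $\delta_r$ into the Nijenhuis-coalgebra condition (\ref{nij-liecoalg}) and into the admissibility condition (\ref{adm-sec}), expand, and simplify using the operator forms of (\ref{adm-first}) and of the Nijenhuis identity for $N$. Your shorthands $A_x=S(\mathrm{ad}_\mathfrak{g})_x-(\mathrm{ad}_\mathfrak{g})_{S(x)}$ and $C_x=(\mathrm{ad}_\mathfrak{g})_{N(x)}-N(\mathrm{ad}_\mathfrak{g})_x$ are exactly the combinations the paper manipulates term-by-term; the identity $A_x\circ N=A_{S(x)}$ is precisely the step where the paper replaces $-S[S(x),p_i]_\mathfrak{g}\otimes q_i+[S^2(x),p_i]_\mathfrak{g}\otimes q_i$ by $-S[x,N(p_i)]_\mathfrak{g}\otimes q_i+[S(x),N(p_i)]_\mathfrak{g}\otimes q_i$, and $C_x\circ N=N\circ C_x$ is what collapses the $\otimes\,q_i$ terms in the second computation. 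So for (\ref{three-star}) and for the first equality inside (\ref{four-star}) your argument and the paper's are the same computation in different packaging.

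Where you diverge is the trailing ``$=0$'' in (\ref{four-star}). The paper's proof only shows that the left-hand side of (\ref{adm-sec}) equals the \emph{difference} of the two displayed expressions in (\ref{four-star}), and then declares the equivalence; it never argues separately that $(\mathrm{Id}_\mathfrak{g}\otimes(\mathrm{ad}_\mathfrak{g})_x)(\mathrm{Id}_\mathfrak{g}\otimes S)(N\otimes\mathrm{Id}_\mathfrak{g}-\mathrm{Id}_\mathfrak{g}\otimes S)(r)=0$. You read (\ref{four-star}) as a genuine conjunction and propose to derive this extra vanishing from (\ref{adm-sec}), (\ref{three-star}), (\ref{ad-inf1}) and (\ref{ad-inf2}). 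That derivation is not in the paper, and you have not indicated a mechanism by which the co-Jacobi and co-antisymmetry of $\delta_r$ would force it; as stated this step is speculative. The safe conclusion is that the paper's own argument establishes only the equivalence with the single equation ``left side $=$ right side'' in (\ref{four-star}), so either the ``$=0$'' is a slip in the statement or it is an additional claim that neither the paper nor your outline substantiates.
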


\begin{proof}
    It has been already recalled that the triple $(\mathfrak{g}, [~, ~]_\mathfrak{g}, \delta)$ is a Lie bialgebra if and only if the conditions (\ref{ad-inf1}) and (\ref{ad-inf2}) hold. Next, for any $x \in \mathfrak{g}$, we observe that
    \begin{align*}
        &(S \otimes S) \delta (x) - (S \otimes \mathrm{Id}_\mathfrak{g} + \mathrm{Id}_\mathfrak{g} \otimes S) \delta (S(x)) + \delta (S^2 (x)) \\
        &= S [x, p_i ]_\mathfrak{g} \otimes S (q_i) + S (p_i) \otimes S [x, q_i]_\mathfrak{g}- S [S(x), p_i]_\mathfrak{g} \otimes q_i - S (p_i) \otimes [S (x), q_i ]_\mathfrak{g} \\
        & \qquad  - [S(x), p_i]_\mathfrak{g} \otimes S (q_i) - p_i \otimes S [S (x), q_i]_\mathfrak{g} + [S^2 (x), p_i]_\mathfrak{g} \otimes q_i + p_i \otimes [S^2 (x), q_i]_\mathfrak{g} \\
        &=  S [x, p_i ]_\mathfrak{g} \otimes S (q_i) + S (p_i) \otimes S [x, q_i]_\mathfrak{g} - S [x, N(p_i)]_\mathfrak{g} \otimes q_i + [S(x), N (p_i)]_\mathfrak{g} \otimes q_i \\
        & \qquad - S (p_i) \otimes [S (x), q_i]_\mathfrak{g} - [S (x), p_i]_\mathfrak{g} \otimes S (q_i) - p_i \otimes S [x, N (q_i)]_\mathfrak{g} + p_i \otimes [S(x), N (q_i)]_\mathfrak{g} \\
        &= \big( \mathrm{Id}_\mathfrak{g} \otimes S(\mathrm{ad}_\mathfrak{g})_x - \mathrm{Id}_\mathfrak{g} \otimes (\mathrm{ad}_\mathfrak{g} )_{S (x)} \big) (S \otimes \mathrm{Id}_\mathfrak{g} - \mathrm{Id}_\mathfrak{g} \otimes N) (r)  \\
        & \qquad  - \big( S (\mathrm{ad}_\mathfrak{g})_x \otimes \mathrm{Id}_\mathfrak{g} - (\mathrm{ad}_\mathfrak{g} )_{S (x)} \otimes \mathrm{Id}_\mathfrak{g} \big) (N \otimes \mathrm{Id}_\mathfrak{g} - \mathrm{Id}_\mathfrak{g} \otimes S) (r).
    \end{align*}
This shows that $S$ is a Nijenhuis operator on the Lie coalgebra $(\mathfrak{g}, \delta)$ if and only if the condition (\ref{three-star}) holds. On the other hand, for $x \in \mathfrak{g}$, we also have
\begin{align*}
    &(N \otimes S) \delta (x) + (N \otimes \mathrm{Id}_\mathfrak{g} - \mathrm{Id}_\mathfrak{g} \otimes S) \delta (N (x)) - (N^2 \otimes \mathrm{Id}_\mathfrak{g}) \delta (x) \\
    &= N [x, p_i ]_\mathfrak{g} \otimes S (q_i) + N (p_i) \otimes S [x, q_i ]_\mathfrak{g} + N [N (x), p_i]_\mathfrak{g} \otimes q_i + N (p_i) \otimes [N (x), q_i ]_\mathfrak{g} \\
    & \qquad -[N(x), p_i]_\mathfrak{g} \otimes S (q_i) - p_i \otimes S [N (x), q_i]_\mathfrak{g} - N^2 ([x, p_i]_\mathfrak{g}) \otimes q_i - N^2 (p_i) \otimes [x, q_i]_\mathfrak{g} \\
    &=  \big(  \mathrm{Id}_\mathfrak{g} \otimes (\mathrm{ad}_\mathfrak{g})_{N (x)} + (\mathrm{ad}_\mathfrak{g})_{N (x)} \otimes \mathrm{Id}_\mathfrak{g} + \mathrm{Id}_\mathfrak{g} \otimes S (\mathrm{ad}_\mathfrak{g})_x - N (\mathrm{ad}_\mathfrak{g})_x \otimes \mathrm{Id}_\mathfrak{g} - N \otimes (\mathrm{ad}_\mathfrak{g})_x   \big) ( N \otimes \mathrm{Id}_\mathfrak{g} - \mathrm{Id}_\mathfrak{g} \otimes S )(r) \\
       & \qquad - ( \mathrm{Id}_\mathfrak{g} \otimes (\mathrm{ad}_\mathfrak{g})_x) ( \mathrm{Id}_\mathfrak{g} \otimes S) ( N \otimes \mathrm{Id}_\mathfrak{g} - \mathrm{Id}_\mathfrak{g} \otimes S )(r)
\end{align*}
Therefore, $N^*$ is admissible to the Nijenhuis Lie algebra $(\mathfrak{g}^*, [~,~]_{\mathfrak{g}^*}, S^*)$, i.e. the identity (\ref{adm-sec}) holds if and only if the condition (\ref{four-star}) holds. Hence the conclusion follows.
\end{proof}

Let $(\mathfrak{g}, [~,~]_\mathfrak{g}, N)$ be a Nijenhuis Lie algebra and $S : \mathfrak{g} \rightarrow \mathfrak{g}$ be a linear map admissible to the Nijenhuis Lie algebra $(\mathfrak{g}, [~,~]_\mathfrak{g}, N)$. Let $r \in \mathfrak{g} \otimes \mathfrak{g}$ be an element satisfying (\ref{ad-inf1}) and the equations:
\begin{align}
   \llbracket r_{12}, r_{13} \rrbracket_\mathfrak{g} + \llbracket r_{12}, r_{23} \rrbracket_\mathfrak{g} + \llbracket r_{13}, r_{23} \rrbracket_\mathfrak{g} =~& 0,  \label{acybe1} \\
   (N \otimes \mathrm{Id}_\mathfrak{g} - \mathrm{Id}_\mathfrak{g} \otimes S)(r) =~& 0, \label{acybe2} \\
    (S \otimes \mathrm{Id}_\mathfrak{g} - \mathrm{Id}_\mathfrak{g} \otimes N) (r) =~& 0. \label{acybe3}
\end{align}
Then it follows from the above theorem that the tuple $(\mathfrak{g}, [~,~]_\mathfrak{g}, N, \delta, S)$ is a Nijenhuis Lie bialgebra.

Given a Lie algebra $(\mathfrak{g}, [~,~]_\mathfrak{g})$, the equation (\ref{acybe1}) is said to be the {\bf classical Yang-Baxter equation} (CYBE) in the Lie algebra. Generalizing this concept in the context of Nijenhuis Lie algebras, we obtain the following.

\begin{defn}
    Let $(\mathfrak{g}, [~,~]_\mathfrak{g}, N)$ be a Nijenhuis Lie algebra and $S : \mathfrak{g} \rightarrow \mathfrak{g}$ be a linear map admissible to the Nijenhuis Lie algebra $(\mathfrak{g}, [~,~]_\mathfrak{g}, N)$. Then the equation (\ref{acybe1}) together with the equations (\ref{acybe2}), (\ref{acybe3}) is called the {\bf admissible classical Yang-Baxter equation} (admissible CYBE) in the Nijenhuis Lie algebra $(\mathfrak{g}, [~,~]_\mathfrak{g}, N)$ and for the admissible map $S$.
\end{defn}

With the above definition, we now obtain the following result.

\begin{prop}\label{prop-la}
    Let $(\mathfrak{g}, [~,~]_\mathfrak{g}, N)$ be a Nijenhuis Lie algebra and $S : \mathfrak{g} \rightarrow \mathfrak{g}$ be a linear map admissible to the Nijenhuis Lie algebra $(\mathfrak{g}, [~,~]_\mathfrak{g}, N)$. Let $r \in \mathfrak{g} \otimes \mathfrak{g}$ be an antisymmetric solution of the admissible classical Yang-Baxter equation. Then $(\mathfrak{g}, [~,~]_\mathfrak{g}, N, \delta, S)$ is a Nijenhuis Lie bialgebra, where $\delta$ is given by (\ref{coboun}).
\end{prop}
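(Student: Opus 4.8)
The plan is to reduce everything to the preceding theorem. That theorem shows that, with $\delta$ defined by (\ref{coboun}), the tuple $(\mathfrak{g}, [~,~]_\mathfrak{g}, N, \delta, S)$ is a Nijenhuis Lie bialgebra if and only if the four conditions (\ref{ad-inf1}), (\ref{ad-inf2}), (\ref{three-star}) and (\ref{four-star}) hold. So it suffices to show that an antisymmetric element $r \in \mathfrak{g} \otimes \mathfrak{g}$ satisfying the admissible CYBE (\ref{acybe1})--(\ref{acybe3}) forces all four of these conditions; beyond unwinding the tensor expressions, no new computation is required, since the heavy lifting was already done in proving the preceding theorem.

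First, antisymmetry of $r$ means $r + \tau(r) = 0$, so applying $(\mathrm{ad}_\mathfrak{g})_x \otimes \mathrm{Id}_\mathfrak{g} + \mathrm{Id}_\mathfrak{g} \otimes (\mathrm{ad}_\mathfrak{g})_x$ gives $0$; this is precisely (\ref{ad-inf1}). Next, (\ref{acybe1}) says that the element $\llbracket r_{12}, r_{13}\rrbracket_\mathfrak{g} + \llbracket r_{12}, r_{23}\rrbracket_\mathfrak{g} + \llbracket r_{13}, r_{23}\rrbracket_\mathfrak{g} \in \mathfrak{g}^{\otimes 3}$ vanishes, so the three-leg adjoint operator appearing in (\ref{ad-inf2}) sends it to $0$; hence (\ref{ad-inf2}) holds as well, and $(\mathfrak{g}, [~,~]_\mathfrak{g}, \delta)$ is a Lie bialgebra.

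It remains to treat (\ref{three-star}) and (\ref{four-star}), and here the key observation is purely structural: each side of these identities factors through one of the tensors $(N \otimes \mathrm{Id}_\mathfrak{g} - \mathrm{Id}_\mathfrak{g} \otimes S)(r)$ or $(S \otimes \mathrm{Id}_\mathfrak{g} - \mathrm{Id}_\mathfrak{g} \otimes N)(r)$. Indeed, the left-hand side of (\ref{three-star}) is an operator applied to $(S \otimes \mathrm{Id}_\mathfrak{g} - \mathrm{Id}_\mathfrak{g} \otimes N)(r)$, which is $0$ by (\ref{acybe3}), while its right-hand side is an operator applied to $(N \otimes \mathrm{Id}_\mathfrak{g} - \mathrm{Id}_\mathfrak{g} \otimes S)(r)$, which is $0$ by (\ref{acybe2}); so (\ref{three-star}) reads $0 = 0$. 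Similarly, every term occurring in (\ref{four-star}) is a linear operator applied to $(N \otimes \mathrm{Id}_\mathfrak{g} - \mathrm{Id}_\mathfrak{g} \otimes S)(r) = 0$, so (\ref{four-star}) holds trivially. Applying the preceding theorem then yields the claim. The only thing one must be slightly careful about — and it is entirely routine — is making these factorizations explicit; there is no genuine obstacle here, since all the substantive content already lives in the preceding theorem.
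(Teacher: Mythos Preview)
Your proposal is correct and follows essentially the same route as the paper: the paper does not give a separate proof of this proposition but simply records it as an immediate consequence of the preceding theorem together with the observation (stated just before the definition of the admissible CYBE) that (\ref{ad-inf1})--(\ref{ad-inf2}), (\ref{three-star}) and (\ref{four-star}) are forced once $r$ is antisymmetric and satisfies (\ref{acybe1})--(\ref{acybe3}). Your write-up makes the factorizations through $(N\otimes\mathrm{Id}_\mathfrak{g}-\mathrm{Id}_\mathfrak{g}\otimes S)(r)$ and $(S\otimes\mathrm{Id}_\mathfrak{g}-\mathrm{Id}_\mathfrak{g}\otimes N)(r)$ explicit, which is exactly what the paper leaves implicit.
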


\medskip

It is well-known that a relative Rota-Baxter operator on a Lie algebra gives rise to an antisymmetric solution of CYBE \cite{bai-jpa}. More precisely, let $(\mathfrak{g}, [~,~]_\mathfrak{g})$ be a Lie algebra and $(\mathcal{V}, \rho)$ be a representation of it. Suppose $r: \mathcal{V} \rightarrow \mathfrak{g}$ is any linear map which we consider as an element of $(\mathfrak{g} \oplus \mathcal{V}^*) \otimes (\mathfrak{g} \oplus \mathcal{V}^*)$ through the identification $\mathrm{Hom} (\mathcal{V}, \mathfrak{g}) \cong \mathfrak{g} \otimes \mathcal{V}^* \subset (\mathfrak{g} \oplus \mathcal{V}^*) \otimes (\mathfrak{g} \oplus \mathcal{V}^*)$. Then $r$ is a relative Rota-Baxter operator if and only if the element $r - \tau (r)$ is an antisymmetric solution of the CYBE in the semidirect product Lie algebra $(\mathfrak{g} \oplus \mathcal{V}^*, [~,~]_\ltimes)$. To generalize this result in the context of Nijenhuis Lie algebras, we first consider the following.

\begin{defn}
    Let $(\mathfrak{g}, [~,~]_\mathfrak{g}, N)$ be a Nijenhuis Lie algebra and $(\mathcal{V}, \rho, S)$ be a Nijenhuis representation of it. A {\em relative Rota-Baxter operator} or an {\em $\mathcal{O}$-operator} associated to the Nijenhuis representation $(\mathcal{V}, \rho, S)$ is a linear map $r : \mathcal{V} \rightarrow \mathfrak{g}$ satisfying $N \circ r = r \circ S$ and the identity (\ref{rrb}).
\end{defn}

\begin{thm}
    Let $(\mathfrak{g}, [~,~]_\mathfrak{g}, N)$ be a Nijenhuis Lie algebra and $(\mathcal{V}, \rho, S)$ be a Nijenhuis representation of it. Let $Q : \mathfrak{g} \rightarrow \mathfrak{g}$ be any linear map and $\beta : \mathcal{V} \rightarrow \mathcal{V}$ be a linear map admissible to the Nijenhuis Lie algebra $(\mathfrak{g}, [~,~]_\mathfrak{g}, N)$ and the Lie algebra representation $(\mathcal{V}, \rho)$. Suppose $r :\mathcal{V} \rightarrow \mathfrak{g}$ is a linear map which we regard as an element of $(\mathfrak{g} \oplus \mathcal{V}^*) \otimes (\mathfrak{g} \oplus \mathcal{V}^*)$. Then the following are equivalent:
    \begin{itemize}
        \item[(1)] $r$ is a relative Rota-Baxter operator associated to the Nijenhuis representation $(\mathcal{V}, \rho, S)$ satisfying additionally $r \circ \beta = Q \circ r$,
        \item[(2)] $r - \tau (r)$ is an antisymmetric solution of the admissible CYBE in the semidirect product Nijenhuis Lie algebra $(\mathfrak{g} \oplus \mathcal{V}^*, [~, ~]_\ltimes, N \oplus \beta^*)$ and for the admissible map $Q \oplus S^*$.
    \end{itemize}
    In either case, $(\mathfrak{g} \oplus \mathcal{V}^*, [~,~]_\ltimes , N \oplus \beta^*, \delta , Q \oplus S^*)$ is a Nijenhuis Lie bialgebra, where the linear map $\delta : \mathfrak{g} \oplus \mathcal{V}^* \rightarrow ( \mathfrak{g} \oplus \mathcal{V}^*) \otimes ( \mathfrak{g} \oplus \mathcal{V}^*)$ is defined by $\delta = \delta_{r - \tau (r)}$.
\end{thm}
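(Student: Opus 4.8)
The plan is to read off the admissible CYBE for $\overline r := r - \tau(r)$ in the semidirect product equation by equation, compare it with the defining conditions of an $\mathcal O$-operator associated to $(\mathcal V, \rho, S)$, and then invoke Proposition \ref{prop-la}. As a preliminary I would first note that, since $\beta$ is admissible to $(\mathfrak g, [~,~]_\mathfrak g, N)$ and the representation $(\mathcal V, \rho)$, the triple $(\mathcal V^*, \rho^*, \beta^*)$ is a Nijenhuis representation of $(\mathfrak g, [~,~]_\mathfrak g, N)$; hence, by Proposition \ref{prop-semid-nlie}, $(\mathfrak g \oplus \mathcal V^*, [~,~]_\ltimes, N \oplus \beta^*)$ is a Nijenhuis Lie algebra. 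A direct (if slightly lengthy) check, using this admissibility of $\beta$, the Nijenhuis-representation axioms for $(\mathcal V, \rho, S)$, and the admissibility hypothesis on $Q$, then shows that $Q \oplus S^*$ is admissible to $(\mathfrak g \oplus \mathcal V^*, [~,~]_\ltimes, N \oplus \beta^*)$ in the sense of (\ref{adm-first}); this legitimizes speaking of the admissible CYBE in $(\mathfrak g \oplus \mathcal V^*, [~,~]_\ltimes, N \oplus \beta^*)$ for the admissible map $Q \oplus S^*$.

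For the equivalence $(1) \Leftrightarrow (2)$, I would write $r = \sum_i a_i \otimes \xi_i \in \mathfrak g \otimes \mathcal V^* \subset (\mathfrak g \oplus \mathcal V^*)^{\otimes 2}$ under the identification $\mathrm{Hom}(\mathcal V, \mathfrak g) \cong \mathfrak g \otimes \mathcal V^*$, so that $\overline r = \sum_i (a_i \otimes \xi_i - \xi_i \otimes a_i)$ has one term lying in the summand $\mathfrak g \otimes \mathcal V^*$ and one in $\mathcal V^* \otimes \mathfrak g$ of the natural direct-sum decomposition of $(\mathfrak g \oplus \mathcal V^*)^{\otimes 2}$. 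By the result recalled from \cite{bai-jpa} just before the theorem, the equation (\ref{acybe1}) for $\overline r$ holds precisely when $r$ is a relative Rota-Baxter operator on $(\mathfrak g, [~,~]_\mathfrak g)$ with respect to $(\mathcal V, \rho)$, i.e. (\ref{rrb}) holds. Next, applying $(N \oplus \beta^*) \otimes \mathrm{Id}$ and $\mathrm{Id} \otimes (Q \oplus S^*)$ to $\overline r$ and separating the result into its $\mathfrak g \otimes \mathcal V^*$- and $\mathcal V^* \otimes \mathfrak g$-parts, one computes — reading back through $\mathrm{Hom}(\mathcal V, \mathfrak g) \cong \mathfrak g \otimes \mathcal V^*$ and using the transpose conventions $\langle S^* \xi, v \rangle = \langle \xi, S v \rangle$ and $\langle \beta^* \xi, v \rangle = \langle \xi, \beta v \rangle$ — that (\ref{acybe2}) is equivalent to the two operator identities $N \circ r = r \circ S$ and $r \circ \beta = Q \circ r$; a completely parallel computation shows that (\ref{acybe3}) is equivalent to the same pair of identities. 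Hence $\overline r$ solves the admissible CYBE iff (\ref{rrb}) holds together with $N \circ r = r \circ S$ and $r \circ \beta = Q \circ r$, which, by the definition of an $\mathcal O$-operator associated to $(\mathcal V, \rho, S)$, is exactly condition (1).

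For the final assertion, $\overline r = r - \tau(r)$ is antisymmetric by construction, and under either of the equivalent conditions it is an antisymmetric solution of the admissible CYBE in the Nijenhuis Lie algebra $(\mathfrak g \oplus \mathcal V^*, [~,~]_\ltimes, N \oplus \beta^*)$ for the admissible map $Q \oplus S^*$. Applying Proposition \ref{prop-la} with this Nijenhuis Lie algebra, this admissible map, and $\overline r$ playing the role of $r$, we obtain that $(\mathfrak g \oplus \mathcal V^*, [~,~]_\ltimes, N \oplus \beta^*, \delta, Q \oplus S^*)$ is a Nijenhuis Lie bialgebra, where $\delta = \delta_{\overline r}$ is given by (\ref{coboun}) applied to $\overline r$.

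The main obstacle I anticipate is the tensor bookkeeping: carrying out the admissibility check for $Q \oplus S^*$, and converting the tensor equations $((N \oplus \beta^*) \otimes \mathrm{Id} - \mathrm{Id} \otimes (Q \oplus S^*))(\overline r) = 0$ and its analogue from (\ref{acybe3}) into operator identities on $\mathrm{Hom}(\mathcal V, \mathfrak g)$, both demand care about which tensor slot each map occupies, about the effect of $\tau$ on $r$, and about the dualization signs. The key organizing observation that keeps this manageable is that the two natural pieces of $\overline r$ lie in the distinct summands $\mathfrak g \otimes \mathcal V^*$ and $\mathcal V^* \otimes \mathfrak g$, so the vanishing of the image of $\overline r$ under these operators can be analyzed in each summand separately.
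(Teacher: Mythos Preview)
Your approach is essentially the paper's: reduce (\ref{acybe1}) for $\overline r$ to (\ref{rrb}) via the result recalled from \cite{bai-jpa}, translate (\ref{acybe2}) and (\ref{acybe3}) into the operator identities $N\circ r = r\circ S$ and $r\circ\beta = Q\circ r$ by projecting onto the summands $\mathfrak g\otimes\mathcal V^*$ and $\mathcal V^*\otimes\mathfrak g$, and finish with Proposition~\ref{prop-la}.

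One discrepancy worth noting: you establish the admissibility of $Q\oplus S^*$ as a separate preliminary and in doing so invoke an ``admissibility hypothesis on $Q$'' that is not in the theorem statement (there $Q$ is an arbitrary linear map). The paper handles this point differently: it packages the admissibility of $Q\oplus S^*$ into the equivalence itself, asserting that the pair of operator identities is equivalent to the admissibility of $Q\oplus S^*$ together with the two tensor equations. So your preliminary step is not how the paper proceeds, and the hypothesis you appeal to does not exist; otherwise the argument is the same.
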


\begin{proof}
    It has been recalled that $r$ satisfies the condition (\ref{rrb}) if and only if $r -\tau (r)$ is an antisymmetric solution of the CYBE in the Lie algebra $(\mathfrak{g} \oplus \mathcal{V}^*, [~,~]_\ltimes)$. Next, it is easy to see that $r$ satisfies $N \circ r = r \circ S$ and $r \circ \beta = Q \circ r$ if and only if the map $Q \oplus S^* : \mathfrak{g} \oplus \mathcal{V}^* \rightarrow \mathfrak{g} \oplus \mathcal{V}^*$ is admissible to the Nijenhuis Lie algebra $(\mathfrak{g} \oplus \mathcal{V}^*, [~, ~]_\ltimes, N \oplus \beta^*)$ and the element $r - \tau (r)$ satisfies
    \begin{align*}
        ( (N \oplus \beta^*) \otimes \mathrm{Id}_{\mathfrak{g} \oplus \mathcal{V}^*} - \mathrm{Id}_{\mathfrak{g} \oplus \mathcal{V}^*} \otimes (Q \oplus S^*)) (r - \tau (r)) = 0,\\
        ( (Q \oplus S^*) \otimes \mathrm{Id}_{\mathfrak{g} \oplus \mathcal{V}^*} - \mathrm{Id}_{\mathfrak{g} \oplus \mathcal{V}^*} \otimes (N \oplus \beta^*)) (r - \tau (r)) = 0.
    \end{align*}
    Hence the first part follows. The last part is a consequence of Proposition \ref{prop-la}.
\end{proof}

\subsection{NS-Lie algebras} Nijenhuis Lie algebras are closely related to NS-Lie algebras \cite{das-twisted}. In the last part of this paper, we obtain some important results including representations and matched pairs of NS-Lie algebras and relate them with the corresponding notions for Nijenhuis Lie algebras. Various other results on NS-Lie algebras including their cohomology and possible bialgebra theory will be discussed in a separate article.

\begin{defn}
    An {\bf NS-Lie algebra} is a triple $(\mathfrak{p}, \diamond, \lfloor ~, ~ \rfloor)$ consisting of a vector space $\mathfrak{p}$ endowed with two bilinear operations $\diamond, \lfloor ~, ~ \rfloor : \mathfrak{p} \times \mathfrak{p} \rightarrow \mathfrak{p}$ in which the operation $\lfloor ~,~ \rfloor$ is antisymmetric and satisfy the following identities:
    \begin{align}
        &(x \diamond y ) \diamond z - x \diamond (y  \diamond z ) + \lfloor x, y \rfloor \diamond z = (y \diamond x ) \diamond z - y \diamond (x  \diamond z ), \label{nsl1} \tag{NSL1}\\
        \lfloor x, ~ \! &\llbracket y, z \rrbracket \rfloor + \lfloor y, \llbracket z, x \rrbracket \rfloor + \lfloor z, \llbracket x, y \rrbracket \rfloor + x \diamond \lfloor y, z \rfloor + y \diamond \lfloor z, x \rfloor + z \diamond \lfloor x, y \rfloor = 0, \label{nsl2} \tag{NSL2}
    \end{align}
    for $x, y, z \in \mathfrak{p}$. Here we use the notation
    \begin{align}\label{subadjacent-ns}
        \llbracket x, y \rrbracket := x \diamond y - y \diamond x + \lfloor x, y \rfloor, \text{ for } x, y \in \mathfrak{p}.
    \end{align}
\end{defn}

An NS-Lie algebra $(\mathfrak{p}, \diamond, \lfloor ~, ~ \rfloor)$ for which the operation $\diamond$ is trivial turns out to be a Lie algebra. On the other hand, if the operation $\lfloor ~,~ \rfloor$ is trivial then $(\mathfrak{p}, \diamond)$ becomes a pre-Lie algebra. Thus, an NS-Lie algebra unifies both Lie algebras and pre-Lie algebras. In general, an arbitrary NS-Lie algebra $(\mathfrak{p}, \diamond, \lfloor ~, ~ \rfloor)$ gives rise to a Lie algebra structure on the underlying vector space $\mathfrak{p}$ with the bracket $\llbracket ~,~ \rrbracket$ defined in (\ref{subadjacent-ns}). The Lie algebra $(\mathfrak{p}, \llbracket ~, ~\rrbracket)$ is said to be the {\em subadjacent Lie algebra} of the NS-Lie algebra $(\mathfrak{p}, \diamond, \lfloor ~, ~ \rfloor)$.

\begin{prop}
    Let $(\mathfrak{g}, [~,~]_\mathfrak{g}, N)$ be a Nijenhuis Lie algebra. Then the vector space $\mathfrak{g}$ inherits an NS-Lie algebra structure with the operations
    \begin{align*}
        x \diamond_N y := [N(x), y]_\mathfrak{g} ~~~~ \text{ and } ~~~~ \lfloor x, y \rfloor_N := - N [x, y]_\mathfrak{g}, \text{ for } x, y \in \mathfrak{g}.
    \end{align*}
    The NS-Lie algebra $(\mathfrak{g}, \diamond_N, \lfloor ~,~\rfloor_N)$ is said to be induced from the Nijenhuis Lie algebra  $(\mathfrak{g}, [~,~]_\mathfrak{g}, N)$.
\end{prop}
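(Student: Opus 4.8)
The plan is to verify directly the two defining identities \eqref{nsl1} and \eqref{nsl2} of an NS-Lie algebra for the operations $\diamond_N$ and $\lfloor~,~\rfloor_N$, using only the Nijenhuis identity for $N$ and the Jacobi identity of $[~,~]_\mathfrak{g}$. As a preliminary step I would record the associated subadjacent bracket: from \eqref{subadjacent-ns},
\[
\llbracket x, y \rrbracket = x \diamond_N y - y \diamond_N x + \lfloor x, y \rfloor_N = [N(x), y]_\mathfrak{g} + [x, N(y)]_\mathfrak{g} - N[x,y]_\mathfrak{g} = [x,y]_\mathfrak{g}^N ,
\]
so that the subadjacent Lie algebra of the induced NS-Lie algebra is precisely the deformed Lie algebra $\mathfrak{g}^N$. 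This identification is what makes the $\llbracket~,~\rrbracket$-terms appearing in \eqref{nsl2} manageable.

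For \eqref{nsl1}, I would expand the five terms as $(x\diamond_N y)\diamond_N z = [N[N(x),y]_\mathfrak{g}, z]_\mathfrak{g}$, $x\diamond_N(y\diamond_N z) = [N(x),[N(y),z]_\mathfrak{g}]_\mathfrak{g}$, $\lfloor x,y\rfloor_N\diamond_N z = -[N^2[x,y]_\mathfrak{g},z]_\mathfrak{g}$, and symmetrically for the right-hand side. The difference of the two sides then equals $[\,N[N(x),y]_\mathfrak{g} - N[N(y),x]_\mathfrak{g} - N^2[x,y]_\mathfrak{g}\,,\,z\,]_\mathfrak{g} - [N(x),[N(y),z]_\mathfrak{g}]_\mathfrak{g} + [N(y),[N(x),z]_\mathfrak{g}]_\mathfrak{g}$. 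The key observation is that the first inner expression equals $N\big([N(x),y]_\mathfrak{g} + [x,N(y)]_\mathfrak{g} - N[x,y]_\mathfrak{g}\big) = [N(x),N(y)]_\mathfrak{g}$ by the Nijenhuis identity; the whole difference then collapses to $[[N(x),N(y)]_\mathfrak{g},z]_\mathfrak{g} - [N(x),[N(y),z]_\mathfrak{g}]_\mathfrak{g} + [N(y),[N(x),z]_\mathfrak{g}]_\mathfrak{g}$, which vanishes by the Jacobi identity applied to $N(x), N(y), z$.

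For \eqref{nsl2}, which is the main computational obstacle, I would again expand everything. Using the preliminary identification, $\lfloor x, \llbracket y,z\rrbracket\rfloor_N = -N[x,[y,z]_\mathfrak{g}^N]_\mathfrak{g} = -N[x,[N(y),z]_\mathfrak{g}]_\mathfrak{g} - N[x,[y,N(z)]_\mathfrak{g}]_\mathfrak{g} + N[x,N[y,z]_\mathfrak{g}]_\mathfrak{g}$, while $x \diamond_N \lfloor y,z\rfloor_N = -[N(x),N[y,z]_\mathfrak{g}]_\mathfrak{g}$. Now rewrite $[N(x),N[y,z]_\mathfrak{g}]_\mathfrak{g}$ through the Nijenhuis identity (with arguments $x$ and $[y,z]_\mathfrak{g}$) as $N[N(x),[y,z]_\mathfrak{g}]_\mathfrak{g} + N[x,N[y,z]_\mathfrak{g}]_\mathfrak{g} - N^2[x,[y,z]_\mathfrak{g}]_\mathfrak{g}$; the two terms $\pm N[x,N[y,z]_\mathfrak{g}]_\mathfrak{g}$ cancel. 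Summing the three cyclic contributions, the terms $N^2[x,[y,z]_\mathfrak{g}]_\mathfrak{g}$ add to $N^2$ of a Jacobiator and hence vanish, and what remains is $-N$ applied to $\sum_{\mathrm{cyc}}\big( [x,[N(y),z]_\mathfrak{g}]_\mathfrak{g} + [x,[y,N(z)]_\mathfrak{g}]_\mathfrak{g} + [N(x),[y,z]_\mathfrak{g}]_\mathfrak{g}\big)$. Finally I would regroup this nine-term cyclic sum by collecting, for each of $N(x)$, $N(y)$, $N(z)$, the three double brackets containing it; each such triple is exactly a Jacobi cyclic sum and therefore vanishes, which finishes \eqref{nsl2} and hence the proof. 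The genuinely hard part is only the bookkeeping inside \eqref{nsl2}: one must apply the Nijenhuis identity in precisely the right slot so as to eliminate every degree-two occurrence of $N$, and then recognize the surviving nine double-bracket terms as three instances of the Jacobi identity.
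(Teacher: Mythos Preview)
Your direct verification is correct. The paper does not supply a proof for this proposition; it is stated as a known fact (the construction comes from the author's earlier work \cite{das-twisted}) and the text moves immediately to the next definition. So there is no ``paper's own proof'' to compare against, and your argument fills that gap cleanly.

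A couple of minor remarks. Your handling of \eqref{nsl1} is exactly right: the Nijenhuis identity collapses the first bracket to $[N(x),N(y)]_\mathfrak{g}$ and the rest is Jacobi. For \eqref{nsl2}, your bookkeeping also checks out: after applying the Nijenhuis identity to each $[N(x),N[y,z]_\mathfrak{g}]_\mathfrak{g}$-type term and cancelling, the surviving $N^2$-terms form $N^2$ applied to a Jacobiator, and the remaining nine terms split as the three Jacobi cyclic sums in $(N(x),y,z)$, $(x,N(y),z)$, $(x,y,N(z))$. It might be worth stating explicitly at the outset that $\lfloor~,~\rfloor_N$ is antisymmetric (immediate from the antisymmetry of $[~,~]_\mathfrak{g}$), since that is part of the NS-Lie data, but this is a triviality.
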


\begin{defn}
    Let $(\mathfrak{p}, \diamond, \lfloor ~, ~ \rfloor)$ be an NS-Lie algebra. A {\bf representation} of this NS-Lie algebra is a vector space $\mathcal{V}$ equipped with three linear operations $l, r, \psi : \mathfrak{p} \rightarrow \mathrm{End}(\mathcal{V})$ subject to satisfy the following conditions:
    \begin{align}
        l_{x \diamond y} - l_x l_y + l_{ \lfloor x, y \rfloor } =~& l_{y \diamond x} - l_y l_x, \\
        r_{x \diamond y} - r_y r_x + r_y \psi_x =~& l_x r_y - r_y l_x,\\
        \psi_{ \llbracket x, y \rrbracket } -  r_{\lfloor x, y \rfloor} =~& l_x \psi_y - l_y \psi_x + \psi_x (l_y - r_y + \psi_y) - \psi_y (l_x - r_x + \psi_x),
    \end{align}
    for all $x, y \in \mathfrak{p}$. A representation as above is often denoted by the quadruple $(\mathcal{V}, l, r, \psi)$ or simply by $\mathcal{V}$ when the operations are understood.
\end{defn}

It is easy to see that the notion of representations of an NS-Lie algebra unifies representations of both Lie algebras and pre-Lie algebras. Note that any NS-Lie algebra $(\mathfrak{p}, \diamond, \lfloor ~, ~ \rfloor)$ has a natural representation on the vector space $\mathfrak{p}$ itself, where the maps $l, r, \psi : \mathfrak{p} \rightarrow \mathrm{End} (\mathfrak{p})$ are respectively given by $l_x (y) = x \diamond y$, $r_x (y) = y \diamond x$ and $\psi_x (y) = \lfloor x, y \rfloor$, for $x, y \in \mathfrak{p}$. This is called the {\em adjoint representation} or the {\em regular representation}.

\begin{prop}\label{prop-semid-ns}
    Let $(\mathfrak{p}, \diamond, \lfloor ~, ~ \rfloor)$ be an NS-Lie algebra and $(\mathcal{V}, l, r, \psi)$ be a representation of it. Then the direct sum $\mathfrak{p} \oplus \mathcal{V}$ inherits an NS-Lie algebra structure with the operations
    \begin{align}\label{semid-ns}
        (x, u) \diamond_\ltimes (y, v) := (x \diamond y ~\!, ~\! l_x v + r_y u) ~~~~ \text{ and } ~~~~ \lfloor (x, u), (y, v) \rfloor_\ltimes := ( \lfloor x, y \rfloor ~\! , ~\!  \psi_x v - \psi_y u),
    \end{align}
    for $(x, u), (y, v) \in \mathfrak{p} \oplus \mathcal{V}$. This is called the {\em semidirect product}.
\end{prop}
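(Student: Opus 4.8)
The plan is to verify directly that the two operations in (\ref{semid-ns}) make $\mathfrak{p} \oplus \mathcal{V}$ into an NS-Lie algebra, checking the axioms (\ref{nsl1}) and (\ref{nsl2}) componentwise and reading off from the $\mathcal{V}$-components exactly the three defining identities of a representation. First, the antisymmetry of $\lfloor ~,~\rfloor_\ltimes$ is immediate, since $\lfloor ~,~\rfloor$ is antisymmetric and the expression $\psi_x v - \psi_y u$ is antisymmetric in the pairs $(x,u)$ and $(y,v)$.

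Next I would record the subadjacent bracket of the candidate structure. Setting $\Psi_x := l_x - r_x + \psi_x \in \mathrm{End}(\mathcal{V})$, a short computation from (\ref{subadjacent-ns}) gives
\[ \llbracket (x, u), (y, v) \rrbracket_\ltimes = \big( \llbracket x, y \rrbracket ~\!,~\! \Psi_x v - \Psi_y u \big), \quad \text{for } (x,u), (y,v) \in \mathfrak{p} \oplus \mathcal{V}. \]
Isolating $\Psi_x$ here is the key bookkeeping point: it is precisely the operator combination occurring in the third representation identity, which will be the one responsible for (\ref{nsl2}).

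Then I would expand (\ref{nsl1}) on three elements $(x,u), (y,v), (z,w)$. The $\mathfrak{p}$-component is (\ref{nsl1}) for $(\mathfrak{p}, \diamond, \lfloor ~,~\rfloor)$, hence holds. The $\mathcal{V}$-component is a sum of terms each linear in exactly one of $u, v, w$; collecting the coefficient of $w$ yields the first representation identity $l_{x \diamond y} - l_x l_y + l_{\lfloor x, y\rfloor} = l_{y \diamond x} - l_y l_x$, while the coefficients of $u$ and of $v$ each yield the second representation identity $r_{x \diamond y} - r_y r_x + r_y \psi_x = l_x r_y - r_y l_x$ after relabelling. Similarly, expanding (\ref{nsl2}) and using the formula for $\llbracket ~,~\rrbracket_\ltimes$ above, the $\mathfrak{p}$-component is (\ref{nsl2}) for $\mathfrak{p}$, and the $\mathcal{V}$-component splits into three parts linear in $u$, $v$, $w$ respectively, each of which reproduces the third representation identity $\psi_{\llbracket x, y\rrbracket} - r_{\lfloor x, y\rfloor} = l_x \psi_y - l_y \psi_x + \psi_x \Psi_y - \psi_y \Psi_x$ with the variables cyclically permuted, and so vanishes. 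This finishes the verification.

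The construction is the standard semidirect product, parallel to the analogous constructions for Nijenhuis Lie algebras (Proposition \ref{prop-semid-nlie}) and for $2$-term $L_\infty$-algebras given earlier, so no new idea is required. The only real work is organizational: (\ref{nsl2}) has six summands, and once $\llbracket ~,~\rrbracket_\ltimes$ is substituted each contributes several $\mathcal{V}$-valued terms, so the main obstacle is carrying out the expansion carefully — grouping terms by which of $u$, $v$, $w$ they are linear in — so that the three representation axioms become visibly the content of the $\mathcal{V}$-components.
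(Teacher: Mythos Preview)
Your proposal is correct. The paper itself omits the proof of this proposition, noting it is straightforward and deferring to the more general bicrossed product construction in Theorem~\ref{bicrossed-ns}; your direct verification is exactly the special case of that computation (with $\mathfrak{p}_2 = \mathcal{V}$ carrying trivial NS-Lie operations and $L = R = \Psi = 0$), so the underlying method is the same.
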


The proof of the above proposition is straightforward. Here we omit the proof as we will discuss a more general result in Theorem \ref{bicrossed-ns}. It is important to remark that the converse of the above proposition is also true. More precisely, let $(\mathfrak{p}, \diamond, \lfloor ~, ~\rfloor)$ be an NS-Lie algebra and $\mathcal{V}$ be any vector space with the linear maps $l, r, \psi : \mathfrak{p} \rightarrow \mathrm{End} (\mathcal{V})$. Suppose the space $\mathfrak{p} \oplus \mathcal{V}$ endowed with the operations defined in (\ref{semid-ns}) forms an NS-Lie algebra. Then the quadruple $(\mathcal{V}, l, r, \psi)$ is a representation of the NS-Lie algebra $(\mathfrak{p}, \diamond, \lfloor ~, ~\rfloor)$.

\begin{prop}
    Let $(\mathfrak{p}, \diamond, \lfloor ~, ~ \rfloor)$ be an NS-Lie algebra and $(\mathcal{V}, l, r, \psi)$ be a representation of it. Define a map $\rho : \mathfrak{p} \rightarrow \mathrm{End} (\mathcal{V})$ by $\rho_x (v) := (l_x - r_x + \psi_x) v$, for $x \in \mathfrak{p}$ and $v \in \mathcal{V}$. Then $\rho$ defines a representation of the subadjacent Lie algebra $(\mathfrak{p}, \llbracket ~, ~ \rrbracket)$ on the vector space $\mathcal{V}$.
\end{prop}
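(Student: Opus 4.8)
The plan is to deduce the statement from the semidirect product construction of Proposition~\ref{prop-semid-ns} rather than checking the Lie representation axiom by hand. First I would recall that, by Proposition~\ref{prop-semid-ns}, the direct sum $\mathfrak{p} \oplus \mathcal{V}$ together with the operations $\diamond_\ltimes$ and $\lfloor~,~\rfloor_\ltimes$ of \eqref{semid-ns} is an NS-Lie algebra. By the general fact recalled before Proposition~\ref{prop-semid-ns} (every NS-Lie algebra has a subadjacent Lie algebra, with bracket as in \eqref{subadjacent-ns}), the space $\mathfrak{p} \oplus \mathcal{V}$ therefore carries the Lie bracket
\[
  \llbracket (x,u),(y,v)\rrbracket_\ltimes := (x,u)\diamond_\ltimes(y,v) - (y,v)\diamond_\ltimes(x,u) + \lfloor (x,u),(y,v)\rfloor_\ltimes .
\]

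Next I would unwind this bracket using \eqref{semid-ns}. Collecting the $\mathcal{V}$-components gives $l_x v + r_y u - l_y u - r_x v + \psi_x v - \psi_y u = (l_x - r_x + \psi_x)v - (l_y - r_y + \psi_y)u$, so that
\[
  \llbracket (x,u),(y,v)\rrbracket_\ltimes = \big(\, \llbracket x,y\rrbracket \,,\; \rho_x v - \rho_y u \,\big).
\]
Thus the subadjacent Lie bracket of the semidirect product NS-Lie algebra has precisely the shape of the semidirect product Lie bracket \eqref{semid}, with underlying Lie algebra the subadjacent Lie algebra $(\mathfrak{p},\llbracket~,~\rrbracket)$ and action map $\rho$; in particular $\mathfrak{p}\times\{0\}$ is a Lie subalgebra isomorphic to $(\mathfrak{p},\llbracket~,~\rrbracket)$ and $\{0\}\times\mathcal{V}$ is an abelian ideal.

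Finally, I would read off the representation condition from the Jacobi identity of $\llbracket~,~\rrbracket_\ltimes$. Evaluating it on the triple $(x,0),(y,0),(0,w)$, and using that the bracket of two elements of $\{0\}\times\mathcal{V}$ vanishes, the $\mathfrak{p}$-component becomes the Jacobi identity of $(\mathfrak{p},\llbracket~,~\rrbracket)$ (automatic), while the $\mathcal{V}$-component collapses to $\rho_{\llbracket x,y\rrbracket}w - \rho_x\rho_y w + \rho_y\rho_x w = 0$ for all $x,y\in\mathfrak{p}$ and $w\in\mathcal{V}$. Hence $\rho_{\llbracket x,y\rrbracket} = [\rho_x,\rho_y]$ in $\mathrm{End}(\mathcal{V})$, i.e. $\rho$ is a representation of the subadjacent Lie algebra, which is the assertion.

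The argument is mostly bookkeeping; the one place needing care is the middle step — correctly assembling the $\mathcal{V}$-components of $\diamond_\ltimes$ and $\lfloor~,~\rfloor_\ltimes$ with the sign conventions of \eqref{semid-ns} and \eqref{subadjacent-ns} so as to recognize the combination $l_x - r_x + \psi_x = \rho_x$; after that identification, Proposition~\ref{prop-semid-ns} together with the existence of the subadjacent Lie algebra does all the work. Alternatively one can argue directly, substituting $\llbracket x,y\rrbracket = x\diamond y - y\diamond x + \lfloor x,y\rfloor$ into $\rho_{\llbracket x,y\rrbracket} = l_{\llbracket x,y\rrbracket} - r_{\llbracket x,y\rrbracket} + \psi_{\llbracket x,y\rrbracket}$ and applying the three defining identities of a representation of $(\mathfrak{p},\diamond,\lfloor~,~\rfloor)$; this works as well but is longer and less illuminating.
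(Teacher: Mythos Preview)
Your proof is correct and follows essentially the same approach as the paper: form the semidirect product NS-Lie algebra via Proposition~\ref{prop-semid-ns}, compute its subadjacent Lie bracket, and observe that it has the semidirect product shape $\big(\llbracket x,y\rrbracket,\ \rho_x v - \rho_y u\big)$, which forces $\rho$ to be a Lie algebra representation. The paper stops at this recognition, while you additionally spell out the Jacobi identity on the triple $(x,0),(y,0),(0,w)$; this is harmless extra detail but not a different argument.
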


\begin{proof}
    Since $(\mathfrak{p}, \diamond, \lfloor ~, ~ \rfloor)$ is an NS-Lie algebra and $(\mathcal{V}, l, r, \psi)$ is a representation of it, we have the semidirect product NS-Lie algebra $(\mathfrak{p} \oplus \mathcal{V}, \diamond_\ltimes, \lfloor ~, ~ \rfloor_\ltimes)$ given in Proposition \ref{prop-semid-ns}. The corresponding subadjacent Lie algebra is then given by $(\mathfrak{p} \oplus \mathcal{V}, \llbracket ~, ~ \rrbracket_\ltimes)$, where
    \begin{align*}
        \llbracket (x, u), (y, v) \rrbracket_\ltimes =~& (x, u) \diamond_\ltimes (y, v) - (y, v) \diamond_\ltimes (x, u) + \lfloor (x, u), (y, v) \rfloor_\ltimes \\
        =~& \big( \llbracket x, y \rrbracket ~ \!, ~ \! (l_x - r_x + \psi_x ) v - (l_y - r_y + \psi_y) u   \big),
    \end{align*}
    for $(x, u), (y, v) \in \mathfrak{p} \oplus \mathcal{V}$. The above expression of the Lie bracket shows that $\rho = l -r +\psi$ defines a representation of the subadjacent Lie algebra $(\mathfrak{p}, \llbracket ~, ~ \rrbracket)$ on the vector space $\mathcal{V}$.
\end{proof}

The following result shows that a Nijenhuis representation of a Nijenhuis Lie algebra gives rise to a representation of the induced NS-Lie algebra. More precisely, we have the following.

\begin{prop}\label{gen-prop}
    Let $(\mathfrak{g}, [~,~]_\mathfrak{g}, N)$ be a Nijenhuis Lie algebra and $(\mathcal{V}, \rho, S)$ be a Nijenhuis representation of it. Then the quadruple $(\mathcal{V}, l, r, \psi)$ is a representation of the induced NS-Lie algebra $(\mathfrak{g}, \diamond_N, \lfloor ~, ~\rfloor_N)$, where
    \begin{align*}
        l_x (v) := \rho_{N(x)} v, \quad r_x (v) := - \rho_x S(v) ~~~~ \text{ and } ~~~~ \psi_x (v) := - S (\rho_x v), \text{ for } x \in \mathfrak{g}, v \in \mathcal{V}.
    \end{align*}
\end{prop}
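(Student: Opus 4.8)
The plan is to reduce the statement to the already-established semidirect product constructions, rather than verifying the three representation axioms of an NS-Lie algebra by brute force. Concretely, by Proposition \ref{prop-semid-nlie}, the triple $(\mathfrak{g} \oplus \mathcal{V}, [~,~]_\ltimes, N \oplus S)$ is a Nijenhuis Lie algebra. Therefore the previous proposition (the induced NS-Lie algebra from a Nijenhuis Lie algebra) applies to it, yielding an NS-Lie algebra structure $(\mathfrak{g} \oplus \mathcal{V}, \diamond_{N \oplus S}, \lfloor ~,~ \rfloor_{N \oplus S})$ on $\mathfrak{g} \oplus \mathcal{V}$. The first step is to write out these induced operations explicitly using the semidirect product bracket $(\ref{semid})$: for $(x,u), (y,v) \in \mathfrak{g} \oplus \mathcal{V}$,
\begin{align*}
    (x,u) \diamond_{N \oplus S} (y,v) &= [(N \oplus S)(x,u), (y,v)]_\ltimes = \big( [N(x), y]_\mathfrak{g}, ~ \rho_{N(x)} v - \rho_y S(u) \big),\\
    \lfloor (x,u), (y,v) \rfloor_{N \oplus S} &= -(N \oplus S)\big( [(x,u),(y,v)]_\ltimes \big) = \big( -N[x,y]_\mathfrak{g}, ~ -S(\rho_x v - \rho_y u) \big).
\end{align*}

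The second step is to observe that this NS-Lie algebra structure on $\mathfrak{g} \oplus \mathcal{V}$ has exactly the form of a semidirect product as in Proposition \ref{prop-semid-ns}: restricting to the subspace $\mathfrak{g} \oplus 0$ we recover the induced NS-Lie algebra $(\mathfrak{g}, \diamond_N, \lfloor ~,~\rfloor_N)$, and comparing the $\mathcal{V}$-components of the formulas above with the semidirect product operations $(\ref{semid-ns})$ we read off
\begin{align*}
    l_x(v) = \rho_{N(x)} v, \qquad r_x(v) = -\rho_x S(v), \qquad \psi_x(v) = -S(\rho_x v),
\end{align*}
which are precisely the maps in the statement. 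Since $(\mathfrak{g} \oplus \mathcal{V}, \diamond_{N\oplus S}, \lfloor ~,~\rfloor_{N\oplus S})$ is an NS-Lie algebra of semidirect product form with $(\mathfrak{g}, \diamond_N, \lfloor ~,~\rfloor_N)$ as the base, the converse to Proposition \ref{prop-semid-ns} (noted in the paragraph following it) immediately gives that $(\mathcal{V}, l, r, \psi)$ is a representation of the induced NS-Lie algebra, completing the proof.

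The main thing to be careful about is the bookkeeping in the second step: one must check that the operations $\diamond_{N \oplus S}$ and $\lfloor ~,~\rfloor_{N \oplus S}$, when written in coordinates $(x,u),(y,v)$, split as ``base operation on the $\mathfrak{g}$-part, plus the linear-action terms on the $\mathcal{V}$-part'' with no cross terms mixing $u$ and $v$ inappropriately — i.e. that they genuinely match the template $(\ref{semid-ns})$ with the claimed $l, r, \psi$. This is a direct but slightly fiddly expansion using the definition $(\ref{semid})$ of $[~,~]_\ltimes$ and the fact that $(N \oplus S)(x,u) = (N(x), S(u))$. If one prefers to avoid even invoking the converse of Proposition \ref{prop-semid-ns}, the alternative is to verify the three representation axioms directly; but that amounts to re-deriving the NS-Lie algebra identities for the semidirect product, so the route through Propositions \ref{prop-semid-nlie} and \ref{prop-semid-ns} is cleaner. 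Either way there is no serious obstacle — the content is entirely contained in the already-proved semidirect product results.
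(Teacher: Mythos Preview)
Your proof is correct and follows essentially the same route as the paper: form the semidirect product Nijenhuis Lie algebra $(\mathfrak{g}\oplus\mathcal{V},[~,~]_\ltimes,N\oplus S)$, pass to its induced NS-Lie algebra, compute the two operations explicitly, and then read off the representation $(\mathcal{V},l,r,\psi)$ via the converse of Proposition~\ref{prop-semid-ns}. The paper's proof is slightly terser in that final step, but the argument is identical.
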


\begin{proof}
    Since $(\mathcal{V}, \rho, S)$ is a Nijenhuis representation of the Nijenhuis Lie algebra $(\mathfrak{g}, [~,~]_\mathfrak{g}, N)$, one may consider the corresponding semidirect product Nijenhuis Lie algebra $(\mathfrak{g} \oplus \mathcal{V}, [~,~]_\ltimes, N \oplus S)$. Hence the vector space $\mathfrak{g} \oplus \mathcal{V}$ can be given an NS-Lie algebra structure with the operations
    \begin{align*}
        (x, u) \diamond_{N \oplus S} (y, v) = [ (N (x), S (u)), (y, v) ]_\ltimes =~& \big(  [N (x), y]_\mathfrak{g} ~ \! , ~ \! \rho_{N (x)} v - \rho_y S (u)   \big)\\
        =~& (x \diamond_N y ~ \! , ~ \! l_x v + r_y u),\\
        \lfloor (x, u), (y, v) \rfloor_{N \oplus S} = - (N \oplus S) [(x, u), (y, v)]_\ltimes =~& \big( - N [x, y]_\mathfrak{g} ~ \!, ~ \! - S (\rho_x v) + S (\rho_y u)   \big) \\
        =~& (\lfloor x, y \rfloor_N ~ \! , ~ \! \psi_x v - \psi_y u),
    \end{align*}
    for $(x, u), (y, v) \in \mathfrak{g} \oplus \mathcal{V}$. The above two expressions show that the maps $l, r, \psi$ defines a representation of the induced NS-Lie algebra $(\mathfrak{g}, \diamond_N, \lfloor ~, ~ \rfloor_N)$ on the vector space $\mathcal{V}$.
\end{proof}

Keeping in mind the definition of representations of an NS-Lie algebra, we will now define matched pairs of NS-Lie algebras.

\begin{defn}
    A {\bf matched pair of NS-Lie algebras} is a tuple
    \begin{align*}
         \big(  (\mathfrak{p}_1, \diamond_1, \lfloor ~, ~ \rfloor_1),  (\mathfrak{p}_2, \diamond_2, \lfloor ~, ~ \rfloor_2), l ,r, \psi, L, R, \Psi   \big)
    \end{align*}
    consisting of two NS-Lie algebras $(\mathfrak{p}_1, \diamond_1, \lfloor ~, ~ \rfloor_1)$ and $ (\mathfrak{p}_2, \diamond_2, \lfloor ~, ~ \rfloor_2)$ with linear maps $l, r, \psi : \mathfrak{p}_1 \rightarrow \mathrm{End} (\mathfrak{p}_2)$ and $L, R, \Psi : \mathfrak{p}_2 \rightarrow \mathrm{End} (\mathfrak{p}_1)$ such that
    \begin{itemize}
        \item the quadruple $(\mathfrak{p}_2, l, r, \psi)$ is a representation of the NS-Lie algebra $(\mathfrak{p}_1, \diamond_1, \lfloor ~, ~ \rfloor_1)$,
        \item the quadruple $(\mathfrak{p}_1,L, R, \Psi)$ is a representation of the NS-Lie algebra $(\mathfrak{p}_2, \diamond_2, \lfloor ~, ~ \rfloor_2)$
    \end{itemize}
    and for all $x, y \in \mathfrak{p}_1$, $\alpha, \beta \in \mathfrak{p}_2$, the following compatibility conditions hold:
    \begin{align}
        l_x (\alpha \diamond_2 \beta) =~& (l_x \alpha) \diamond_2 \beta  + \alpha \diamond_2 (l_x \beta) + (\psi_x \alpha) \diamond_2 \beta - r_x (\alpha) \diamond_2 \beta
        + r_{R_\beta x} \alpha - l_{ (L_\alpha - R_\alpha + \Psi_\alpha) x } \beta, \label{mnsl1}\\
        r_x (\llbracket \alpha , \beta \rrbracket_2) =~& \alpha \diamond_2 r_x (\beta) - \beta \diamond_2 r_x (\alpha) + r_{L_\beta x} \alpha - r_{L_\alpha x} \beta, \label{mnsl2}\\
        L_\alpha (x \diamond_1 y) =~& (L_\alpha x) \diamond_1 y + x \diamond_1 (L_\alpha y) + (\Psi_\alpha x) \diamond_1 y - R_\alpha (x) \diamond_1 y + R_{r_y \alpha} x - L_{ (l_x -r_x + \psi_x) \alpha} y, \label{mnsl3}\\
        R_\alpha ( \llbracket x, y \rrbracket_1) =~& x \diamond_1 R_\alpha (y) - y \diamond_1 R_\alpha (x) + R_{l_y \alpha} x - R_{l_x \alpha} y, \label{mnsl4}\\
        l_x (\lfloor \alpha, \beta \rfloor_2) =~& \lfloor (l_x -r_x + \psi_x) (\alpha), \beta \rfloor_2 + \lfloor \alpha, (l_x - r_x + \psi_x )(\beta) \rfloor_2 + \psi_{ (L_\beta - R_\beta + \Psi_\beta) x} \alpha \label{mnsl5}\\
        & \quad - \psi_{ (L_\alpha - R_\alpha + \Psi_\alpha) x} \beta + \alpha \diamond_2 \psi_x \beta - \beta \diamond_2 \psi_x \alpha + r_{\Psi_\alpha x} \beta - r_{\Psi_\beta x} \alpha - \psi_x \llbracket \alpha , \beta \rrbracket_2, \nonumber\\
        L_\alpha ( \lfloor x, y \rfloor_1 ) =~& \lfloor (L_\alpha -R_\alpha + \Psi_\alpha) (x), y \rfloor_1 + \lfloor x, (L_\alpha - R_\alpha + \Psi_\alpha )(y) \rfloor_1 + \Psi_{ (l_y - r_y + \psi_y) \alpha} x \label{mnsl6}\\
        & \quad - \Psi_{ (l_x - r_x + \psi_x) \alpha} y + x \diamond_1 \Psi_\alpha y - y \diamond_1 \Psi_\alpha x + R_{\psi_x \alpha} y - R_{\psi_y \alpha} x - \Psi_\alpha \llbracket x , y \rrbracket_1, \nonumber
    \end{align}
    where $\llbracket x, y \rrbracket_1 = x \diamond_1 y - y \diamond_1 x + \lfloor x, y \rfloor_1$ and $\llbracket \alpha, \beta \rrbracket_2 = \alpha \diamond_2 \beta - \beta \diamond_2 \alpha + \lfloor \alpha, \beta \rfloor_2.$
        \end{defn}

        In the above definition, if the operations $\diamond_1, \diamond_2, l, r, L, R$ are trivial then $((\mathfrak{p}_1, \lfloor ~, ~ \rfloor_1), (\mathfrak{p}_2, \lfloor ~, ~ \rfloor_2), \psi, \Psi )$ forms a matched pair of Lie algebras \cite{majid-lie}. On the other hand, if $\lfloor ~, ~ \rfloor_1$, $\lfloor ~, ~ \rfloor_2$, $\psi, \Psi$ 
 are trivial then $((\mathfrak{p}_1, \diamond_1), (\mathfrak{p}_2, \diamond_2), l, r, L, R)$ becomes a matched pair of pre-Lie algebras \cite{bai-pre}. Therefore, our definition unifies both the matched pair of Lie algebras and the matched pair of pre-Lie algebras. In the following result, we give the bicrossed product construction associated with a given matched pair of NS-Lie algebras. More precisely, we have the following.

        \begin{thm}\label{bicrossed-ns}
            Let  $\big(  (\mathfrak{p}_1, \diamond_1, \lfloor ~, ~ \rfloor_1),  (\mathfrak{p}_2, \diamond_2, \lfloor ~, ~ \rfloor_2), l ,r, \psi, L, R, \Psi   \big)$ be a matched pair of NS-Lie algebras. Then $(\mathfrak{p}_1 \oplus \mathfrak{p}_2, \diamond_\Join, \lfloor ~, ~ \rfloor_\Join)$ is an NS-Lie algebra, where
            \begin{align}
                (x, \alpha) \diamond_\Join (y, \beta) :=~& ( x \diamond_1 y + L_\alpha y + R_\beta x ~ \!, ~ \! \alpha \diamond_2 \beta + l_x \beta + r_y \alpha), \label{bim1} \\
                \lfloor (x, \alpha) , (y, \beta) \rfloor_\Join :=~& (\lfloor x, y \rfloor_1 + \Psi_\alpha y -\Psi_\beta x ~ \!, ~ \! \lfloor \alpha, \beta \rfloor_2 + \psi_x \beta - \psi_y \alpha), \label{bim2}
            \end{align}
            for $(x, \alpha), (y, \beta) \in \mathfrak{p}_1 \oplus \mathfrak{p}_2$. This is called the {\bf bicrossed product} of the given matched pair of NS-Lie algebras.
        \end{thm}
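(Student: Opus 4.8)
The plan is to verify the two defining identities \textup{(NSL1)} and \textup{(NSL2)}, i.e. (\ref{nsl1}) and (\ref{nsl2}), directly for the operations $\diamond_\Join$ and $\lfloor ~,~ \rfloor_\Join$ given in (\ref{bim1})--(\ref{bim2}). Since both sides of (\ref{nsl1}) and (\ref{nsl2}) are multilinear in their three arguments, it suffices to check the identities when each argument lies entirely in $\mathfrak{p}_1$ or entirely in $\mathfrak{p}_2$, i.e. in the eight cases indexed by words of length three in $\{1,2\}$. When all three arguments lie in $\mathfrak{p}_1$, both $\diamond_\Join$ and $\lfloor ~,~ \rfloor_\Join$ restrict to $\diamond_1$ and $\lfloor ~,~ \rfloor_1$ on $\mathfrak{p}_1 \oplus 0$ (the $\mathfrak{p}_2$-components of (\ref{bim1})--(\ref{bim2}) vanish when the $\mathfrak{p}_2$-slots are zero), so the identities reduce to the NS-Lie axioms for $(\mathfrak{p}_1, \diamond_1, \lfloor ~,~ \rfloor_1)$; the all-$\mathfrak{p}_2$ case is symmetric. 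Moreover, the whole construction is invariant under swapping the two NS-Lie algebras together with the roles of $(l,r,\psi)$ and $(L,R,\Psi)$ (this swap maps the six compatibility conditions (\ref{mnsl1})--(\ref{mnsl6}) to themselves in three mirror pairs), so the three mixed cases with two arguments in $\mathfrak{p}_2$ follow by applying the result to the swapped matched pair. Thus only three cases of (\ref{nsl1}) and three cases of (\ref{nsl2}) genuinely need computation.

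For each such mixed case I would substitute (\ref{bim1})--(\ref{bim2}) into the identity, use the formula
\begin{align*}
\llbracket (x,\alpha),(y,\beta)\rrbracket_\Join = \big( \llbracket x,y\rrbracket_1 + (L_\alpha - R_\alpha + \Psi_\alpha)\, y - (L_\beta - R_\beta + \Psi_\beta)\, x ~ \! , ~ \! \llbracket \alpha,\beta\rrbracket_2 + (l_x - r_x + \psi_x)\, \beta - (l_y - r_y + \psi_y)\, \alpha \big)
\end{align*}
obtained by applying the bicrossed operations to (\ref{subadjacent-ns}), and then separate the resulting equation into its $\mathfrak{p}_1$- and $\mathfrak{p}_2$-components. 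In each component the terms fall into two groups: those built only from $\diamond_i$, $\lfloor ~,~ \rfloor_i$ and a single action map, which cancel by the representation axioms for $(\mathfrak{p}_2, l, r, \psi)$ over $(\mathfrak{p}_1, \diamond_1, \lfloor ~,~ \rfloor_1)$ (respectively for $(\mathfrak{p}_1, L, R, \Psi)$ over $(\mathfrak{p}_2, \diamond_2, \lfloor ~,~ \rfloor_2)$); and \emph{cross} terms, which involve an action of one factor composed with an action of the other, and which are exactly matched by the compatibility conditions (\ref{mnsl1})--(\ref{mnsl6}). Concretely, the mixed cases of (\ref{nsl1}) are settled using (\ref{mnsl1}) and (\ref{mnsl3}), while the mixed cases of (\ref{nsl2}) use (\ref{mnsl2}), (\ref{mnsl4}), (\ref{mnsl5}) and (\ref{mnsl6}) (each in suitable cyclically permuted instances), together with the NS-Lie axioms and the representation axioms for the summands.

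An alternative organizing remark is that the displayed formula for $\llbracket ~,~ \rrbracket_\Join$ is precisely the bicrossed-product Lie bracket attached to the matched pair of subadjacent Lie algebras $(\mathfrak{p}_1, \llbracket ~,~ \rrbracket_1)$ and $(\mathfrak{p}_2, \llbracket ~,~ \rrbracket_2)$ with actions $l - r + \psi$ and $L - R + \Psi$; these actions do form a matched pair of Lie algebras (combining the appropriate parts of the representation conditions and of (\ref{mnsl1})--(\ref{mnsl6})), so the Jacobi identity of $\llbracket ~,~ \rrbracket_\Join$ comes for free and can be used to compress the bookkeeping in (\ref{nsl2}).

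The main obstacle will be purely combinatorial bookkeeping: identity (\ref{nsl2}) for the bicrossed product expands into a sizeable collection of terms in each mixed case, and one must track which compatibility condition among (\ref{mnsl1})--(\ref{mnsl6}), in which of its cyclic forms, annihilates each batch of cross terms. No idea beyond those already used for Proposition \ref{prop-semid-ns} is required; Theorem \ref{bicrossed-ns} is the common generalization of that proposition and of the matched-pair/bicrossed-product theory for Lie algebras, and its proof is a longer but completely parallel verification.
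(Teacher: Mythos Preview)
Your proposal is correct and follows essentially the same approach as the paper: reduce the verification of (\ref{nsl1}) and (\ref{nsl2}) by multilinearity to elements of the form $(x,0)$ or $(0,\alpha)$, then dispatch each case using the NS-Lie axioms, the representation axioms, and the compatibility conditions (\ref{mnsl1})--(\ref{mnsl6}). The paper's own proof is the one-sentence sketch of precisely this computation; your version just adds the symmetry reduction and some organizing remarks. One small caution: your allocation of specific compatibility conditions to specific identities is not quite right---for example, the case of (\ref{nsl1}) with two inputs in $\mathfrak{p}_1$ and one in $\mathfrak{p}_2$ already uses (\ref{mnsl4}) in its $\mathfrak{p}_1$-component, not only (\ref{mnsl1}) and (\ref{mnsl3})---but this does not affect the validity of the strategy.
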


        \begin{proof}
            To show that $(\mathfrak{p}_1 \oplus \mathfrak{p}_2, \diamond_\Join, \lfloor ~, ~ \rfloor_\Join)$ is an NS-Lie algebra, we need to verify the identities (\ref{nsl1}) and (\ref{nsl2}) for the above two operations. Since both the identities are linear in any input, verifying these identities for the elements of the form $(x, 0)$ or $(0, \alpha)$ with all possible combinations is enough. The result will now follow by straightforward calculations and by using (\ref{mnsl1})-(\ref{mnsl6}).
        \end{proof}

        \begin{remark}\label{conc-rem}
            Let $(\mathfrak{p}_1, \diamond_1, \lfloor ~, ~ \rfloor_1)$ and $  (\mathfrak{p}_2, \diamond_2, \lfloor ~, ~ \rfloor_2)$ be two NS-Lie algebras. Suppose there are linear maps $l, r, \psi : \mathfrak{p}_1 \rightarrow \mathrm{End} (\mathfrak{p}_2)$ and $L, R, \Psi : \mathfrak{p}_2 \rightarrow \mathrm{End} (\mathfrak{p}_1)$ such that the space $\mathfrak{p}_1 \oplus \mathfrak{p}_2$ endowed with the operations (\ref{bim1}), (\ref{bim2}) forms an NS-Lie algebra. Then  $\big(  (\mathfrak{p}_1, \diamond_1, \lfloor ~, ~ \rfloor_1),  (\mathfrak{p}_2, \diamond_2, \lfloor ~, ~ \rfloor_2), l ,r, \psi, L, R, \Psi   \big)$ is a matched pair of NS-Lie algebras.
        \end{remark}

        \begin{prop}
            Let $\big(  (\mathfrak{p}_1, \diamond_1, \lfloor ~, ~ \rfloor_1),  (\mathfrak{p}_2, \diamond_2, \lfloor ~, ~ \rfloor_2), l ,r, \psi, L, R, \Psi   \big)$ be a matched pair of NS-Lie algebras. Then the quadruple $((\mathfrak{p}_1, \llbracket ~, ~ \rrbracket_1), (\mathfrak{p}_2, \llbracket ~, ~ \rrbracket_2), l-r+\psi, L-R+\Psi )$ is a matched pair of (subadjacent) Lie algebras.
        \end{prop}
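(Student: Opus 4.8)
The plan is to deduce the statement from the bicrossed product construction of Theorem~\ref{bicrossed-ns} rather than to check the matched pair axioms for Lie algebras directly. The key observation is that the subadjacent Lie algebra of the bicrossed product NS-Lie algebra $(\mathfrak{p}_1 \oplus \mathfrak{p}_2, \diamond_\Join, \lfloor ~,~\rfloor_\Join)$ is, on the nose, a bicrossed product of the two subadjacent Lie algebras, with the expected actions.

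First I would record, via the earlier proposition on subadjacent representations (the one sending a representation $(\mathcal{V},l,r,\psi)$ of an NS-Lie algebra to the representation $\rho = l-r+\psi$ of its subadjacent Lie algebra), that $l-r+\psi : \mathfrak{p}_1 \rightarrow \mathrm{End}(\mathfrak{p}_2)$ is a representation of $(\mathfrak{p}_1, \llbracket ~,~\rrbracket_1)$ and $L-R+\Psi : \mathfrak{p}_2 \rightarrow \mathrm{End}(\mathfrak{p}_1)$ is a representation of $(\mathfrak{p}_2, \llbracket ~,~\rrbracket_2)$. This takes care of the two ``representation'' clauses in the definition of a matched pair of Lie algebras.

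Next, Theorem~\ref{bicrossed-ns} gives the NS-Lie algebra structure $(\mathfrak{p}_1 \oplus \mathfrak{p}_2, \diamond_\Join, \lfloor ~,~\rfloor_\Join)$, hence a Lie bracket $\llbracket X,Y\rrbracket_\Join := X \diamond_\Join Y - Y \diamond_\Join X + \lfloor X, Y\rfloor_\Join$ on $\mathfrak{p}_1 \oplus \mathfrak{p}_2$. Computing this on $X=(x,\alpha)$, $Y=(y,\beta)$ from (\ref{bim1}), (\ref{bim2}) and collecting components with the help of (\ref{subadjacent-ns}) for both NS-Lie algebras yields
\[
\llbracket (x,\alpha),(y,\beta)\rrbracket_\Join = \big( \llbracket x,y\rrbracket_1 + (L-R+\Psi)_\alpha y - (L-R+\Psi)_\beta x ~,~ \llbracket \alpha,\beta\rrbracket_2 + (l-r+\psi)_x \beta - (l-r+\psi)_y \alpha \big),
\]
which is exactly the bicrossed product bracket (\ref{bicrossed-bracket}) attached to $\big((\mathfrak{p}_1,\llbracket ~,~\rrbracket_1),(\mathfrak{p}_2,\llbracket ~,~\rrbracket_2), l-r+\psi, L-R+\Psi\big)$. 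Since $\llbracket ~,~\rrbracket_\Join$ satisfies the Jacobi identity, evaluating this identity on triples of the shape $(x,0),(y,0),(0,\beta)$ and $(x,0),(0,\alpha),(0,\beta)$ and separating the $\mathfrak{p}_1$- and $\mathfrak{p}_2$-components extracts precisely the two mixed compatibility identities of a matched pair of Lie algebras, the purely one-sided components merely reproducing the Jacobi identities already known for $\llbracket ~,~\rrbracket_1$ and $\llbracket ~,~\rrbracket_2$. Combined with the first step, this proves the claim. Equivalently, one may simply invoke the standard equivalence, as in \cite{majid-lie}, between matched pairs of Lie algebras and Lie algebra structures on a direct sum of the form (\ref{bicrossed-bracket}).

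I expect the only real work to be the bookkeeping in the computation of $\llbracket ~,~\rrbracket_\Join$: the cross terms $L_\alpha y, R_\beta x, \Psi_\alpha y, \ldots$ and $l_x\beta, r_y\alpha, \psi_x\beta, \ldots$ must be collected with the correct signs so that they assemble into $(L-R+\Psi)_\alpha y - (L-R+\Psi)_\beta x$ and $(l-r+\psi)_x\beta - (l-r+\psi)_y\alpha$, matching the $+\nu_h y - \nu_k x$, $+\rho_x k - \rho_y h$ pattern of (\ref{bicrossed-bracket}). The other possible friction point is that the excerpt states only the ``matched pair $\Rightarrow$ bicrossed product'' direction of the correspondence for Lie algebras, so one either uses its (classical) converse or, as indicated above, extracts the compatibility conditions by expanding the Jacobi identity directly. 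Either way, no computation genuinely new beyond Theorem~\ref{bicrossed-ns} is required.
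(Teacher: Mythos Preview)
Your proposal is correct and follows essentially the same route as the paper: compute the subadjacent Lie bracket of the bicrossed product NS-Lie algebra from Theorem~\ref{bicrossed-ns}, recognize it as the bicrossed product bracket (\ref{bicrossed-bracket}) for the data $((\mathfrak{p}_1,\llbracket~,~\rrbracket_1),(\mathfrak{p}_2,\llbracket~,~\rrbracket_2),l-r+\psi,L-R+\Psi)$, and conclude via the converse of the matched pair/bicrossed product correspondence. The paper's proof is terser (it simply writes the formula and says ``concludes the result''), whereas you spell out the implicit appeal to the converse and also note the redundant-but-reassuring first step that $l-r+\psi$ and $L-R+\Psi$ are representations.
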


        \begin{proof}
            First, consider the bicrossed product NS-Lie algebra $(\mathfrak{p}_1 \oplus \mathfrak{p}_2 , \diamond_\Join, \lfloor ~, ~ \rfloor_\Join)$ given in Theorem \ref{bicrossed-ns}. The corresponding subadjacent Lie algebra $(\mathfrak{p}_1 \oplus \mathfrak{p}_2, \llbracket ~, ~ \rrbracket_\Join)$ is given by
            \begin{align*}
                \llbracket (x, \alpha), (y, \beta) \rrbracket_\Join = (x, \alpha) \diamond_\Join (y, \beta) - (y, \beta) \diamond_\Join (x, \alpha) + \lfloor (x, \alpha) , (y, \beta) \rfloor_\Join,
            \end{align*}
            for $(x, \alpha), (y, \beta) \in \mathfrak{p}_1 \oplus \mathfrak{p}_2$. By using the definitions of $\diamond_\Join$ and $\lfloor ~, ~ \rfloor_\Join$, one obtain that
            \begin{align*}
                  \llbracket (x, \alpha), (y, \beta) \rrbracket_\Join =~& \big(  \llbracket x, y \rrbracket_1 + (L_\alpha - R_\alpha + \Psi_\alpha) y - (L_\beta - R_\beta + \Psi_\beta) x ~, \\
                 & \qquad \qquad  \llbracket \alpha, \beta \rrbracket_2 + (l_x - r_x + \psi_x) \beta - (l_y -r_y + \psi_y) \alpha \big).
            \end{align*}
            The above expression of the Lie bracket concludes the result.
        \end{proof}

 The following result shows that a matched pair of Nijenhuis Lie algebras gives rise to a matched pair of induced NS-Lie algebras. This generalizes Proposition \ref{gen-prop}.

 \begin{prop}
     Let $( (\mathfrak{g}, [~,~]_\mathfrak{g}, N), (\mathfrak{h}, [~,~]_\mathfrak{h}, S), \rho, \nu)$ be a matched pair of Nijenhuis Lie algebras. Then the tuple $( (\mathfrak{g}, \diamond_N, \lfloor ~, ~ \rfloor_N) , (\mathfrak{h}, \diamond_S, \lfloor ~, ~ \rfloor_S), l, r, \psi, L, R, \Psi ) $ is a matched pair of induced NS-Lie algebras, where the maps $l, r, \psi : \mathfrak{g} \rightarrow \mathrm{End} (\mathfrak{h})$ and $L, R, \Psi: \mathfrak{h} \rightarrow \mathrm{End}(\mathfrak{g})$ are respectively given by
     \begin{align*}
         & ~~ l_x h = \rho_{N (x)} h, \quad r_x h = - \rho_x S(h), \quad \psi_x h = - S (\rho_x h),\\
         &L_h x = \nu_{S(h)} x, \quad R_h x = - \nu_h N (x), \quad \Psi_h x = - N (\nu_h x).
     \end{align*}
 \end{prop}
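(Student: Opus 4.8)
The plan is to deduce the statement from the bicrossed product construction for Nijenhuis Lie algebras (Proposition~\ref{prop-bicrossed-nlie}) together with the converse characterization of matched pairs of NS-Lie algebras (Remark~\ref{conc-rem}), following the same pattern as the proof of Proposition~\ref{gen-prop}. Since $( (\mathfrak{g}, [~,~]_\mathfrak{g}, N), (\mathfrak{h}, [~,~]_\mathfrak{h}, S), \rho, \nu)$ is a matched pair of Nijenhuis Lie algebras, Proposition~\ref{prop-bicrossed-nlie} gives that $(\mathfrak{g} \oplus \mathfrak{h}, [~,~]_\Join, N \oplus S)$ is a Nijenhuis Lie algebra; hence the vector space $\mathfrak{g} \oplus \mathfrak{h}$ carries the NS-Lie algebra structure induced from it, with operations $(a) \diamond_{N \oplus S} (b) := [(N \oplus S)(a), b]_\Join$ and $\lfloor (a), (b) \rfloor_{N \oplus S} := -(N \oplus S)[(a), (b)]_\Join$.

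First I would expand these two operations on elements $(x, h), (y, k) \in \mathfrak{g} \oplus \mathfrak{h}$ using the bicrossed product bracket~(\ref{bicrossed-bracket}). A direct computation gives
\begin{align*}
(x, h) \diamond_{N \oplus S} (y, k) &= \big( [N(x), y]_\mathfrak{g} + \nu_{S(h)} y - \nu_k N(x),~ [S(h), k]_\mathfrak{h} + \rho_{N(x)} k - \rho_y S(h) \big),\\
\lfloor (x, h), (y, k) \rfloor_{N \oplus S} &= \big( -N[x, y]_\mathfrak{g} - N(\nu_h y) + N(\nu_k x),~ -S[h, k]_\mathfrak{h} - S(\rho_x k) + S(\rho_y h) \big).
\end{align*}
Then I would rewrite every summand in terms of the induced operations $\diamond_N, \lfloor ~,~\rfloor_N$ on $\mathfrak{g}$, the induced operations $\diamond_S, \lfloor ~,~\rfloor_S$ on $\mathfrak{h}$, and the six maps of the statement: for instance $[N(x), y]_\mathfrak{g} = x \diamond_N y$, $\nu_{S(h)} y = L_h y$, $-\nu_k N(x) = R_k x$, $[S(h), k]_\mathfrak{h} = h \diamond_S k$, $\rho_{N(x)} k = l_x k$, $-\rho_y S(h) = r_y h$, and similarly $-N[x, y]_\mathfrak{g} = \lfloor x, y \rfloor_N$, $-N(\nu_h y) = \Psi_h y$, $N(\nu_k x) = -\Psi_k x$, $-S[h, k]_\mathfrak{h} = \lfloor h, k \rfloor_S$, $-S(\rho_x k) = \psi_x k$, $S(\rho_y h) = -\psi_y h$. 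After these substitutions the two operations become precisely $\diamond_\Join$ and $\lfloor ~,~\rfloor_\Join$ of~(\ref{bim1}),~(\ref{bim2}) attached to the data $( (\mathfrak{g}, \diamond_N, \lfloor ~,~\rfloor_N), (\mathfrak{h}, \diamond_S, \lfloor ~,~\rfloor_S), l, r, \psi, L, R, \Psi)$.

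To conclude, I would invoke Remark~\ref{conc-rem}: the algebras $(\mathfrak{g}, \diamond_N, \lfloor ~,~\rfloor_N)$ and $(\mathfrak{h}, \diamond_S, \lfloor ~,~\rfloor_S)$ are NS-Lie algebras (being induced from the Nijenhuis Lie algebras $(\mathfrak{g}, [~,~]_\mathfrak{g}, N)$ and $(\mathfrak{h}, [~,~]_\mathfrak{h}, S)$), and $\mathfrak{g} \oplus \mathfrak{h}$ with the operations~(\ref{bim1}),~(\ref{bim2}) is an NS-Lie algebra (it is the NS-Lie algebra induced from the Nijenhuis Lie algebra of Proposition~\ref{prop-bicrossed-nlie}); therefore the tuple $( (\mathfrak{g}, \diamond_N, \lfloor ~,~\rfloor_N), (\mathfrak{h}, \diamond_S, \lfloor ~,~\rfloor_S), l, r, \psi, L, R, \Psi)$ is a matched pair of NS-Lie algebras. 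The only laborious step is the explicit expansion above; there is no genuine obstacle, because this route sidesteps any direct verification of the six compatibility identities~(\ref{mnsl1})--(\ref{mnsl6}) --- in particular the cumbersome ones~(\ref{mnsl5}) and~(\ref{mnsl6}) --- which would otherwise be the main difficulty of a brute-force proof.
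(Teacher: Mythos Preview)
Your proposal is correct and follows essentially the same approach as the paper: use Proposition~\ref{prop-bicrossed-nlie} to obtain the Nijenhuis Lie algebra $(\mathfrak{g}\oplus\mathfrak{h},[~,~]_\Join,N\oplus S)$, expand its induced NS-Lie operations explicitly, match them with the bicrossed product form (\ref{bim1})--(\ref{bim2}), and conclude via Remark~\ref{conc-rem}. The paper's proof is just a terser version of exactly this argument.
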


 \begin{proof}
     Since $( (\mathfrak{g}, [~,~]_\mathfrak{g}, N), (\mathfrak{h}, [~,~]_\mathfrak{h}, S), \rho, \psi)$ is a matched pair of Nijenhuis Lie algebras, it follows from Proposition \ref{prop-bicrossed-nlie} that the triple $(\mathfrak{g} \oplus \mathfrak{h}, [~,~]_\Join, N \oplus S)$ is a Nijenhuis Lie algebra. Hence the space $\mathfrak{g} \oplus \mathfrak{h}$ has an NS-Lie algebra structure with the operations
     \begin{align*}
         (x, h) \diamond_{N \oplus S} (y, k) =~& [ (N (x), S(h)), (y, k) ]_\Join \\
         =~& ( x \diamond_N y + \nu_{S(h)} y - \nu_k N (x) ~ \! , ~ \! [S(h), k]_\mathfrak{h} + \rho_{N (x)} k - \rho_y  S (h) ), \\
         \lfloor (x, h), (y, k) \rfloor_{N \oplus S} =~& - (N \oplus S) [ (x, h), (y, k) ]_\Join \\
         =~& (\lfloor x, y \rfloor_N - N (\nu_h y) + N (\nu_k x) ~ \! , ~ \! \lfloor h, k \rfloor_S - S (\rho_x k) + S (\rho_y h)),
     \end{align*}
     for all $(x, h), (y, k) \in \mathfrak{g} \oplus \mathfrak{h}$. The above two expressions prove the desired result (see Remark \ref{conc-rem}).
 \end{proof}

\medskip

\medskip

\noindent {\bf Acknowledgements.} The author thanks the Department of Mathematics, IIT Kharagpur for providing the beautiful academic atmosphere where the research has been carried out.



\medskip

\noindent {\bf Data Availability Statement.} Data sharing does not apply to this article as no new data were created or analyzed in this study.







\begin{thebibliography}{BFGM03}


\bibitem{azimi} M. J. Azimi, C. Laurent-Gengoux and J. M. Nunes da Costa, Nijenhuis forms on $L_\infty$-algebras and Poisson geometry, {\em Diff. Geom. Appl.} 38 (2015), 69-113.








\bibitem{baez-crans} J. C. Baez and A. S. Crans, Higher-Dimensional Algebra VI: Lie $2$-Algebras, {\em Theor. Appl. Categor.} 12 (2004), 492-528.

\bibitem{bai-jpa} C. Bai, A unified algebraic approach to the classical Yang-Baxter equation, {\em J. Phys. A: Math. Theor.} 40 (2007), 11073-11082.

\bibitem{bai-pre} C. Bai, Left-symmetric bialgebras and an analogue of the classical Yang-Baxter equation, {\em Commun. Contemp. Math.} 10 (2008), 221-260.





\bibitem{bai-bialgebra} C. Bai, L. Guo, G. Liu and T. Ma, Rota-Baxter Lie bialgebras, classical Yang-Baxter equations and special L-dendriform bialgebras, {\em Algebr. Represent. Theor.} 27 (2024), 1347-1372.






\bibitem{gra-bi} J. F. Cari\~{n}ena, J. Grabowski and G. Marmo, Quantum Bi-Hamiltonian systems, {\em Int. J. Mod. Phys. A} 15 (2000), 4797-4810. 

\bibitem{chari}  V. Chari and A. Pressley, A Quide to Quantum Groups, Cambridge University Press, Cambridge (1994).




\bibitem{das-twisted} A. Das,  Twisted Rota-Baxter operators and Reynolds operators on Lie algebras and NS-Lie algebras, {\em J. Math. Phys.} Vol. 62, Issue 9  (2021) 091701.

\bibitem{baishya-das} A. Baishya and A. Das, Cup product, Fr\"{o}licher-Nijenhuis bracket and the derived bracket associated to Hom-Lie algebras, arXiv preprint, arXiv:2409.01865.

\bibitem{das-nj} A. Das, Non-abelian cohomology of Nijenhuis Lie algebras and the inducibility of automorphisms and derivations, Submitted.













\bibitem{dorfman} I. Dorfman, Dirac structures and integrability of nonlinear evolution equations, Wiley, 1993.

\bibitem{doubek} M. Doubek, M. Markl and P. Zima, Deformation theory (lecture notes), {\em Archivum mathematicum} 43 (5) (2007), 333-371.

\bibitem{drin} V. Drinfeld, Quantum groups, Proc. ICM 86, Berkeley (1986), 798-820.





\bibitem{fro-nij-1} A. Fr\"{o}licher and A. Nijenhuis, Theory of vector-valued differential forms. Part I. {\em Indag. Math.}, 18 (1956), 338-360.


\bibitem{gers}  M. Gerstenhaber, On the deformation of rings and algebras, {\em Ann. Math. (2)} 79 (1964), 59-103.

\bibitem{gers-sch} M. Gerstenhaber and S. D. Schack, On the deformation of algebra morphisms and diagrams, {\em Trans. Amer. Math. Soc.} 279 (1983), no. 1, 1-50.







\bibitem{hoch} G. Hochschild, On the cohomology groups of an associative algebra, {\em Ann. Math. (2)} 46 (1945), 58-67.



\bibitem{jiang-sheng} J. Jiang and Y. Sheng, Representations and cohomologies of relative Rota-Baxter Lie algebras and applications, {\em J. Algebra} 602 (2022), 637-670.




\bibitem{koss} Y. Kosmann-Schwarzbach and F. Magri, Poisson-Nijenhuis structures,  {\em Annales de l'I.H.P. Physique th\'{e}orique} 53, no. 1 (1990), 35-81.




\bibitem{lada-markl} T. Lada and M. Markl, Strongly homotopy Lie algebras, {\it Comm. Algebra} 23 (1995), 2147-2161.

\bibitem{lang-sheng} H. Lang and Y. Sheng, Factorizable Lie bialgebras, quadratic Rota–Baxter Lie algebras and Rota–Baxter Lie bialgebras, {\em Commun. Math. Phys.} 397 (2023), 763–791.












\bibitem{loday-der} J.-L. Loday, On the operad of associative algebras with derivation, {\em Georgian Math. J.} 17 (2010), 347-372.

\bibitem{majid-lie} S. Majid, Matched pairs of Lie groups associated to solutions of the Yang-Baxter equation, {\em Pacific J. Math.} 141 (1990), 311-332.


\bibitem{markl} M. Markl, Deformation theory of algebras and their diagrams, CBMS Regional Conference Series in Mathematics, Volume 116 (2012), 129 pp.



\bibitem{nij-ric} A. Nijenhuis and R. Richardson, Cohomology and deformations in graded Lie algebras, {\em Bull. Amer. Math. Soc.} 72 (1966), 1-29.


\bibitem{nij-ric2} A. Nijenhuis and R. Richardson, Deformations of homomorphisms of Lie groups and Lie algebras, {\em Bull. Amer. Math. Soc.} 73 (1967), 175-179.


\bibitem{saha} B. Mondal and R. Saha, Nijenhuis operators on Leibniz algebras, {\em J. Geom. Phys.} 196 (2024), 105057.

\bibitem{ravanpak} Z. Ravanpak, NL bialgebras, {\em Adv. Theor. Math. Phys.} to appear.



\bibitem{sheng-o} R. Tang, C. Bai, L. Guo and Y. Sheng, Deformations and their controlling cohomologies of $\mathcal{O}$-operators, {\em Commun. Math. Phys.} 368 (2019), 665-700.








\bibitem{voro} Th. Voronov, Higher derived brackets and homotopy algebras, {\em J. Pure Appl. Algebra} 202 (2005), 133-153.



\bibitem{yang} Q. Yang, Some graded Lie algebra structures associated with Lie algebras and Lie algebroids, PhD Thesis, University of Toronto (1999).


\end{thebibliography}
\end{document}